\crefname{equation}{}{}
\DeclareSymbolFont{symbolsC}{U}{pxsyc}{m}{n}
\DeclareMathSymbol{\medcircle}{\mathbin}{symbolsC}{7}
\crefname{algocf}{Algorithm}{Algorithms}
\crefname{equation}{}{} 
\colorlet{refkey}{orange!20}
\colorlet{labelkey}{blue!30}
\crefname{algocf}{Algorithm}{Algorithms}
\numberwithin{equation}{section}
\newtheorem{theorem}{Theorem}[section]
\newtheorem{proposition}[theorem]{Proposition}
\newtheorem{lemma}[theorem]{Lemma}
\crefname{claim}{Claim}{Claims}
\newtheorem*{question*}{Question}
\theoremstyle{definition}
\newtheorem{definition}[theorem]{Definition}
\newtheorem*{definition*}{Definition}
\theoremstyle{remark}
\newtheorem*{remark}{Remark}
\newlist{enumthm}{enumerate}{1}
\setlist[enumthm]{label=\textup{(\roman*)},ref=\thethm(\roman*)}
\Crefname{enumthmi}{Theorem}{Theorems}
\newcommand{\snorm}[1]{\lVert#1\rVert}
\newcommand{\sang}[1]{\langle #1 \rangle}
\newcommand{\imod}[1]{~\mathrm{mod}~#1}
\newcommand{\mb}{\mathbb}
\newcommand{\mbm}{\mathbbm}
\newcommand{\mc}{\mathcal}
\newcommand{\mf}{\mathfrak}
\newcommand{\mr}{\mathrm}
\newcommand{\ol}{\overline}
\newcommand{\on}{\operatorname}
\newcommand{\wt}{\widetilde}
\newcommand{\poly}{\on{poly}}
\newcommand{\eps}{\varepsilon}
\let\originalleft\left
\let\originalright\right
\renewcommand{\left}{\mathopen{}\mathclose\bgroup\originalleft}
\renewcommand{\right}{\aftergroup\egroup\originalright}
\title{Improved bounds for five-term arithmetic progressions}
\author[A1]{James Leng}
\address{Department of Mathematics, UCLA, Los Angeles, CA 90095, USA}
\email{jamesleng@math.ucla.edu}
\author[A2]{Ashwin Sah}
\author[A3]{Mehtaab Sawhney}
\address{Department of Mathematics, Massachusetts Institute of Technology, Cambridge, MA 02139, USA}
\email{\{asah,msawhney\}@mit.edu}
\thanks{Leng supported by an NSF Graduate Research Fellowship Grant No. DGE-2034835. Sah and Sawhney were supported by NSF Graduate Research Fellowship Program DGE-2141064.}
\begin{document}

\maketitle
\begin{abstract}
Let $r_5(N)$ be the largest cardinality of a set in $\{1,\ldots,N\}$ which does not contain $5$ elements in arithmetic progression. Then there exists a constant $c\in (0,1)$ such that
\[r_5(N)\ll \frac{N}{\exp((\log\log N)^{c})}.\]
Our work is a consequence of recent improved bounds on the $U^4$-inverse theorem of the first author and the fact that $3$-step nilsequences may be approximated by locally cubic functions on shifted Bohr sets. This combined with the density increment strategy of Heath-Brown and Szemer{\'e}di, codified by Green and Tao, gives the desired result.
\end{abstract}

\section{Introduction}\label{sec:introduction}

Let $[N] = \{1,\ldots, N\}$ and $r_k(N)$ denote the size of the largest $S\subseteq\{1,\ldots, N\}$ such that $S$ has no $k$-term arithmetic progressions. The first nontrivial upper bound on $r_3(N)$ came from work of Roth \cite{Rot54} which proved
\[r_3(N)\ll N(\log\log N)^{-1}.\]
A long series of works improved the bounds in the case $k=3$. This includes works of Heath-Brown \cite{Hea87}, Szemer{\'e}di \cite{Sze90}, Bourgain \cite{Bou99,Bou08}, Sanders \cite{San12, San11}, Bloom \cite{Blo16}, and Bloom and Sisask \cite{BS20}. Recently, in breakthrough work, Kelley and Meka \cite{KM23} proved that 
\[r_3(N)\ll N\exp(-\Omega((\log N)^{1/12}));\]
modulo the constant $1/12$ this matches the lower bound construction of Behrend \cite{Beh46}. The constant $1/12$ was refined to $1/9$ in work of Bloom and Sisask \cite{BS23}.

For higher $k$, establishing that $r_k(N) = o(N)$ was a long standing conjecture of Erd\H{o}s and Tur\'{a}n. In seminal works, Szemer\'{e}di \cite{Sze70} first established the estimate $r_4(N) = o(N)$
and then in later work Szemer\'{e}di \cite{Sze75} established his eponymous theorem that
\[r_k(N) = o(N).\]
Due to the uses of van der Waerden theorem and the regularity lemma (which was introduced in this work), Szemer\'{e}di's estimate was exceedingly weak. In seminal work Gowers \cite{Gow98,Gow01} introduced higher order Fourier analysis and proved the first ``reasonable'' bounds for Szemer\'{e}di's theorem:
\[r_k(N) \ll N(\log\log N)^{-c_k}.\]
The only previous improvement to this result for $k\ge 4$ was work on the case $k=4$ of Green and Tao \cite{GT09,GT17} which ultimately established that 
\[r_4(N)\ll N(\log N)^{-c}.\]

Our main result is a ``quasi-logarithmic'' bound in the case $k = 5$.
\begin{theorem}\label{thm:main}
There is $c\in(0,1)$ such that
\[r_5(N)\ll\frac{N}{\exp((\log\log N)^{c})}.\]
\end{theorem}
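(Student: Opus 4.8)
\emph{Strategy and reduction to a $U^4$ estimate.} The plan is to run the higher-order density-increment argument of Heath-Brown and Szemer\'edi, in the form codified by Green and Tao for $r_4$, now driven by the quantitative $U^4$-inverse theorem rather than the $U^3$-inverse theorem; the new ingredients that make it work are the first author's improved bounds and the linearization of three-step nilsequences on shifted Bohr sets. Concretely, fix a prime $N' \asymp N$ and embed $[N] \hookrightarrow \mathbb{Z}/N'\mathbb{Z}$ so that five-term progressions in $[N]$ are unaffected by wraparound. Suppose $A \subseteq [N]$ has density $\alpha$ and no nontrivial five-term progression, and set $f = 1_A - \alpha 1_{[N]}$. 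Expanding $1_A = \alpha + f$ in the normalized count of five-term progressions and applying the generalized von Neumann inequality, the main term is $\gg \alpha^5$ and every remaining term is $O(\|f\|_{U^4(\mathbb{Z}/N'\mathbb{Z})})$; since the actual count is $O(1/N)$, we obtain (after discarding the trivial range $\alpha \ll N^{-1/10}$) that $\|f\|_{U^4(\mathbb{Z}/N'\mathbb{Z})} \geq \delta$ with $\delta = c_0 \alpha^5$.

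\emph{Inverse theorem and linearization.} By the first author's quantitative $U^4$-inverse theorem, $f$ correlates, to within $\exp(-(\log 1/\delta)^{O(1)})$, with a degree-three nilsequence $F(g(n)\Gamma)$ on a nilmanifold of complexity at most $\exp((\log 1/\delta)^{O(1)})$. The crucial step is to trade this nilsequence for an honest polynomial phase. Using the quantitative equidistribution theory of polynomial orbits on nilmanifolds in the efficient form underlying the inverse theorem — Fourier-expanding $F$, factoring $g$ into a nearly equidistributed part and a rational-plus-smooth part, and iterating into subnilmanifolds — one produces a regular Bohr set $B$ of rank at most $(\log 1/\delta)^{O(1)}$ and radius at least $\exp(-(\log 1/\delta)^{O(1)})$ and a shift $t$ so that, on $B + t$, the nilsequence agrees up to small error with $e(\phi(n))$ for a genuine cubic polynomial $\phi$; this is the statement that three-step nilsequences are locally cubic on shifted Bohr sets. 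Hence $|\mathbb{E}_{n \in B + t} f(n) e(-\phi(n))| \geq \exp(-(\log 1/\delta)^{O(1)})$.

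\emph{Density increment and iteration.} On $B + t$ we apply the Heath-Brown--Szemer\'edi mechanism as codified by Green and Tao: partition into genuine progressions on which the quadratic and cubic parts of $\phi$ are essentially constant, so that $\phi$ is within $o(1)$ of a linear function; on a positive proportion of these progressions $f$ correlates with a linear phase, and passing to a level set of that phase localizes the mass. This yields an arithmetic progression $P \subseteq [N]$ with $\log(N/|P|) \leq (\log 1/\alpha)^{O(1)}$ on which the renormalized copy of $A$ in $[|P|]$ has density at least $\alpha + \exp(-(\log 1/\alpha)^{O(1)})$ and still contains no nontrivial five-term progression. Iterating, reaching the impossible density $1$ takes $T = \exp((\log 1/\alpha)^{O(1)})$ steps, over the course of which $\log N$ decreases by at most $T \cdot (\log 1/\alpha)^{O(1)} = \exp((\log 1/\alpha)^{O(1)})$. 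Hence the iteration runs to completion, producing a contradiction, whenever $\log N \geq \exp((\log 1/\alpha)^{O(1)})$; equivalently, a five-term-progression-free $A$ must satisfy $\alpha \leq \exp(-(\log\log N)^{c})$ for a suitable $c \in (0,1)$, which is \Cref{thm:main}.

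\emph{Main obstacle.} The heart of the matter is the linearization step: converting the three-step nilsequence delivered by the inverse theorem into a cubic polynomial phase on a Bohr set while keeping the rank of the Bohr set and the reciprocal of its radius quasi-polynomial in $1/\alpha$. This is genuinely harder than the two-step situation underlying the $r_4$ argument, since the degree-three part of the filtration contributes an extra layer of ``bracket'' nonlinearity that must be straightened out on the Bohr set; it is precisely the strength of the first author's equidistribution and inverse-theorem bounds that keeps these losses quasi-polynomial rather than letting them cascade into an iterated exponential. Everything downstream is the classical density increment, executed with careful quantitative bookkeeping.
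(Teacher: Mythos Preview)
Your overall strategy is correct, and you have identified the right ``main obstacle'': converting the degree-$3$ nilsequence from the $U^4$-inverse theorem into a locally cubic phase on a Bohr set with quasi-polynomial control. However, your description of how this conversion is achieved is not the paper's approach and, as far as is known, does not work with the required bounds.

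You propose to use equidistribution theory --- Fourier-expanding $F$, applying a factorization theorem to $g$, and iterating into subnilmanifolds. The paper explicitly notes (\cref{sec:outline}) that the existing quantitative equidistribution theory for nilmanifolds does not give polynomial-in-dimension bounds except for test functions with a vertical frequency; running a factorization argument in the generality needed here is not known to keep the losses quasi-polynomial. Instead, the paper proves \cref{prop:3-step-conversion} by a completely explicit computation in Mal'cev coordinates of the first kind: after a partition of unity and careful bracket-polynomial manipulations, one uses the identity $\{x\}y+x\{y\}\equiv xy+\{x\}\{y\}\pmod 1$ (and iterates thereof) to show that for a $3$-step nilmanifold all \emph{nested} fractional parts can be removed, leaving only expressions of the form $\alpha n^3$, $\alpha n^2\lfloor\beta n\rfloor$, $\alpha n\lfloor\beta n\rfloor\lfloor\gamma n\rfloor$, $\alpha\lfloor\beta n\rfloor\lfloor\gamma n\rfloor\lfloor\delta n\rfloor$. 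These are locally cubic on any Bohr set in the linear frequencies. This is a genuine algebraic miracle specific to step $\le 3$; the paper points out that the analogous statement fails from step $4$ onward because of phases like $e(\{an\{bn\}\}\{cn\}dn)$. Your sketch does not contain this idea.

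Your iteration bookkeeping also does not match what is actually provable. You claim a subprogression $P$ with $\log(N/|P|)\le(\log 1/\alpha)^{O(1)}$; the paper only obtains $|P|\ge N^{1/M(\delta)}$ with $M(\delta)=\exp((\log 1/\delta)^{O(1)})$, because the atom $\Omega^\ast$ of the Koopman--von Neumann factor must be partitioned into progressions on which $d=\exp((\log 1/\delta)^{O(1)})$ many cubic phases are simultaneously near-constant, and Schmidt's recurrence (\cref{prop:schmidt}) then yields progressions of length only $N^{1/d^{O(1)}}$. Correspondingly, the paper needs the \emph{multiplicative} density increment $(1+c')\delta$ of the Heath-Brown--Szemer\'edi mechanism (\cref{prop:cubic-bohr-increment}), so that only $O(\log 1/\delta)$ iterations are required; with your additive increment $\exp(-(\log 1/\alpha)^{O(1)})$ and the true progression-length bound, the iteration would run $\exp((\log 1/\alpha)^{O(1)})$ times and the losses would cascade to a triple-logarithmic bound. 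Your final arithmetic happens to give the right answer only because the too-optimistic progression length and the too-weak increment cancel.
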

\begin{remark}
Throughout this paper, we will abusively write $\log$ for $\max(\log(\cdot), e^e)$. This is to avoid trivial issues regarding inputs which are small. 
\end{remark}

Our work relies crucially on a recent improved inverse theorem for the Gowers $U^4$-norm due to the first author \cite[Theorem~7]{Len23}. We first formally define the Gowers $U^k$-norm.
\begin{definition}\label{def:gowers-norm}
Given $f\colon\mb{Z}/N\mb{Z}\to\mb{C}$ and $k\ge 1$, we define
\[\snorm{f}_{U^k}^{2^k}=\mb{E}_{x,h_1,\ldots,h_k\in\mb{Z}/N\mb{Z}}\Delta_{h_1,\ldots,h_k}f(x)\]
where $\Delta_hf(x)=f(x)\ol{f(x+h)}$ is the multiplicative discrete derivative (extended to vectors $h$ in the natural way). This is known to be well-defined and a norm (seminorm if $k=1$).
\end{definition}
\begin{theorem}\label{thm:U4-new}
There exists an absolute constant $C\ge 1$ such that the following holds. Let $N$ be prime, let $\delta>0$, and suppose that $f\colon\mb{Z}/N\mb{Z}\to\mb{C}$ is a $1$-bounded function with 
\[\snorm{f}_{U^4(\mb{Z}/N\mb{Z})}\ge\delta.\]
There exists a degree $3$ nilsequence $F(g(n)\Gamma)$ such that it has dimension bounded by $(\log(1/\delta))^C$, complexity bounded by $C$, such that $F$ is $1$-Lipschitz, and such that
\[\big|\mb{E}_{n\in[N]} f(n)\ol{F(g(n)\Gamma)}\big|\ge1/\exp((\log(1/\delta))^{C}).\]
\end{theorem}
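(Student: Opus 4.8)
The plan is to establish this as a reformulation of the quantitative $U^4$-inverse theorem of \cite[Theorem~7]{Len23}, and I sketch the argument underlying it. The overall route is the classical degree-reduction strategy of Gowers and of Green--Tao, but carried out with every equidistribution and structural step quantified so that all nilmanifolds produced along the way have dimension $\poly(\log(1/\delta))$ rather than the $\poly(1/\delta)$ one gets from naive regularity-type arguments; this polylogarithmic control is exactly what is needed to eventually beat the $(\log\log N)^{-c}$ barrier.

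First I would expand the $U^4$-norm: since $\snorm{f}_{U^4}^{16}=\mb{E}_h\snorm{\Delta_h f}_{U^3}^8$, a popularity argument produces a set $H$ of $\gg\delta^{O(1)}N$ shifts $h$ with $\snorm{\Delta_h f}_{U^3}\ge\delta^{O(1)}$. For each such $h$ I would apply a quantitative $U^3$-inverse theorem to the $1$-bounded function $\Delta_h f$, obtaining a degree $2$ nilsequence $\chi_h$ of dimension $\poly(\log(1/\delta))$ and complexity $O(1)$ with $|\mb{E}_n \Delta_h f(n)\ol{\chi_h(n)}|\ge\exp(-\poly(\log(1/\delta)))$. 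It is essential here to feed in the version of the $U^3$-inverse theorem with polylogarithmic dimension dependence rather than the original Green--Tao bound, since dimension losses compound multiplicatively through the remainder of the argument.

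The hard part will be to show that the quadratic frequency data of $\chi_h$ varies approximately \emph{linearly} in $h$. After a Weyl-type expansion of $\chi_h$ on its nilmanifold one isolates a quadratic form $M_h$ in the $n$-coordinates together with lower-order data, and a Cauchy--Schwarz argument in the $h$-variable forces $M_{h_1}+M_{h_2}\approx M_{h_3}+M_{h_4}$ for many additive quadruples $h_1+h_2=h_3+h_4$ in $H$. Applying Balog--Szemer\'edi--Gowers and a Freiman-type analysis — but performed on the relevant shifted Bohr set / nilmanifold so as not to incur dimension blow-up — should upgrade $h\mapsto M_h$ to a genuine bilinear form in $(h,n)$ on a large piece of $H$, up to low-complexity corrections. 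I expect this to be the main obstacle: the quantitative bookkeeping is most delicate here, and it rests on efficient equidistribution theory for nilsequences together with the approximation of $3$-step nilsequences by locally cubic functions on shifted Bohr sets, the ingredients highlighted in the abstract.

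Finally I would ``integrate in $h$'': a bilinear form in $(h,n)$ is the $h$-derivative of a cubic form in $n$, and more generally the family $\{\chi_h\}$ glues into a single degree $3$ nilsequence $F(g(n)\Gamma)$ whose dimension is the sum of the constituent dimensions — still $\poly(\log(1/\delta))$ — and whose complexity is $O(1)$. A closing Cauchy--Schwarz and Fourier argument then converts ``$\Delta_h f$ correlates with $\chi_h$ for many $h$'' into $|\mb{E}_{n\in[N]} f(n)\ol{F(g(n)\Gamma)}|\ge\exp(-\poly(\log(1/\delta)))$. Rescaling $F$ to be $1$-Lipschitz and transferring between $[N]$ and $\mb{Z}/N\mb{Z}$ (using that $N$ is prime) are routine and cost only polynomial factors in the parameters, so choosing $C$ large enough to absorb all implied constants finishes the proof.
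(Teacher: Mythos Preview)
The paper does not prove this theorem at all: it is quoted verbatim as an input, attributed to \cite[Theorem~7]{Len23}. So there is no ``paper's own proof'' to compare against beyond the citation, and your opening sentence correctly identifies this.

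That said, your sketch contains a genuine confusion about the logical architecture. You write that the linearization step ``rests on \ldots the approximation of $3$-step nilsequences by locally cubic functions on shifted Bohr sets, the ingredients highlighted in the abstract.'' This is backwards. The local-cubic approximation (\cref{prop:3-step-conversion}) is a contribution of \emph{this} paper, and it is used to pass \emph{from} \cref{thm:U4-new} \emph{to} \cref{prop:U4-inv-cubic}; it is not an ingredient in the proof of \cref{thm:U4-new} itself. Invoking it inside your sketch of \cref{thm:U4-new} would be circular. The actual proof in \cite{Len23} relies instead on quantitatively sharp equidistribution theory for nilsequences (also developed by the first author in \cite{Len23b}), not on any bracket-polynomial or Bohr-set reduction.

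Beyond that, your outline is a fair high-level summary of the Gowers/Green--Tao--Ziegler degree-lowering strategy, but the step you flag as ``the main obstacle'' --- controlling the dimension growth in the linearization / symmetry argument --- is exactly where all the work lies, and your paragraph does not indicate how \cite{Len23} actually achieves polylogarithmic dimension there. Simply saying ``Balog--Szemer\'edi--Gowers and a Freiman-type analysis \ldots performed on the relevant shifted Bohr set / nilmanifold so as not to incur dimension blow-up'' is a description of the desired conclusion rather than a mechanism. If you want to sketch the input theorem, you should point to the quantitative equidistribution results that replace the Green--Tao factorization theorem with losses polynomial in the dimension, since those are what drive the improvement.
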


A key maneuver in this paper is our deduction of a variant of the $U^4$-inverse theorem which lends itself to the analysis of multiple nilsequences simultaneously and may be of independent interest. Although it is known that having a large $U^4$-norm does not necessarily imply direct correlation with a cubic phase function due to the existence of bracket polynomials, one can hope to achieve correlation on a dense, structured host set (for us, a Bohr set).
\begin{proposition}\label{prop:U4-inv-cubic}
There exists an absolute constant $C\ge 1$ such that the following holds. Let $N$ be prime, let $\delta>0$, and suppose that $f\colon\mb{Z}/N\mb{Z}\to\mb{C}$ is a $1$-bounded function with 
\[\snorm{f}_{U^4(\mb{Z}/N\mb{Z})}\ge\delta.\]
There exist $S\subseteq (1/N)\mb{Z}$ with $|S|\ll (\log(1/\delta))^C$, and $y\in\mb{Z}/N\mb{Z}$ such that the following holds: there is a locally cubic phase function $\phi\colon y+B(S,1/100)\to\mb{R}/\mb{Z}$ such that
\[|\mb{E}_{t\in\mb{Z}/N\mb{Z}}\mbm{1}_{t\in y + B(S,1/100)}f(t)e(-\phi(t))|\gg1/\exp((\log(1/\delta))^C).\]
\end{proposition}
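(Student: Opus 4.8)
\emph{Proof idea.} The plan is to feed the hypothesis of \Cref{prop:U4-inv-cubic} into \Cref{thm:U4-new} and then massage the resulting nilsequence into a locally cubic phase on a Bohr set. Concretely: since $\snorm{f}_{U^4(\mb Z/N\mb Z)}\ge\delta$, \Cref{thm:U4-new} supplies a degree $3$ nilsequence $F(g(n)\Gamma)$ on a nilmanifold $G/\Gamma$ of dimension $\le(\log(1/\delta))^C$ and complexity $\le C$, with $F$ $1$-Lipschitz and $|\mb E_{n\in[N]}f(n)\ol{F(g(n)\Gamma)}|\ge\delta_0:=1/\exp((\log(1/\delta))^C)$. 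The goal is to convert this nilsequence correlation into correlation with a locally cubic phase on a shifted Bohr set of dimension $\poly(\log(1/\delta))$; along the way the correlation will degrade by a further factor $1/\exp(\poly(\log(1/\delta)))$, which is harmless since the conclusion of \Cref{prop:U4-inv-cubic} permits any constant.

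First I would put $g$ into a normal form. A vertical Fourier decomposition of $F$ together with pigeonhole (the expansion converging rapidly since $F$ is Lipschitz of bounded complexity and dimension $\le(\log(1/\delta))^C$) reduces to the case that $F$ has a single vertical frequency $\xi$; if $\xi=0$ the nilsequence descends to a $2$-step one and the argument is strictly easier, so assume $\xi\neq0$. Next I would invoke the quantitative theory of equidistribution of polynomial orbits on nilmanifolds (Green--Tao), iterated, to replace $g$ by a polynomial sequence in normal form: every ``internal frequency'' entering the bracket-polynomial description below is either one of $\ll(\log(1/\delta))^C$ linear forms $\alpha_i n$ or a rational $a/q$ with $q\ll\poly(\log(1/\delta))$, any genuinely irrational higher-degree data having been pushed into a lower-dimensional sub-nilmanifold or shown to be incompatible with the correlation. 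Here one exploits the freedom to multiply $g$ on the right by $\Gamma$-valued polynomial sequences (which does not change $F(g(n)\Gamma)$) and absorbs slowly varying ``smooth'' factors into a redefined $F$.

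Now I would pass from the nilsequence to a single phase and then localize. In Mal'cev coordinates of the second kind adapted to the degree-$3$ filtration, and with $g$ in Taylor form, reducing $g(n)$ to the standard fundamental domain via the Baker--Campbell--Hausdorff formula expresses each fundamental-domain coordinate as a bracket polynomial in $n$ of degree $\le3$ assembled from the $\alpha_i n$, the $a/q$, genuine polynomials, floor functions and products, with all remaining data $O_C(1)$; thus $F(g(n)\Gamma)=\Phi(\Psi(n))$ for a fixed $1$-Lipschitz $\Phi$ and $\Psi(n)$ the vector of these bracket polynomials. Mollifying $\Phi$ at scale $\delta_0/100$ (dealing with the twisted gluing of the fundamental domain by hand on its low-dimensional boundary faces), expanding in a Fourier series, truncating at height $K=\exp(\poly(\log(1/\delta)))$, and applying Parseval and pigeonhole produces a single frequency $k$ with $\snorm{k}_\infty\le K$ such that, with $\psi:=k\cdot\Psi$ a single degree-$\le3$ bracket polynomial sharing the internal frequencies of $\Psi$,
\[\big|\mb E_{n\in[N]}f(n)e(-\psi(n))\big|\gg\delta_0\,(2K+1)^{-(\dim G)/2}=1/\exp(\poly(\log(1/\delta))).\]
Set $S\subseteq(1/N)\mb Z$ to consist of the (rationalized) $\alpha_i$ and the $1/q$, together with an $O_C(1)$-size augmentation, so $|S|\ll(\log(1/\delta))^C$. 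Averaging the last display over translates $y+B(S,1/100)$, which cover $\mb Z/N\mb Z$ with multiplicity $|B(S,1/100)|$, yields some $y$ with
\[\big|\mb E_{t\in\mb Z/N\mb Z}\mbm{1}_{t\in y+B(S,1/100)}\,f(t)e(-\psi(t))\big|\gg\tfrac{|B(S,1/100)|}{N}\cdot1/\exp(\poly(\log(1/\delta)))=1/\exp(\poly(\log(1/\delta))),\]
using $|B(S,1/100)|/N\ge100^{-|S|}$. Finally, choosing for each $i$ the branch cut of the floor attached to $\alpha_i$ to avoid the arc (of length $\le1/50$) swept by $\{\alpha_i t\}$ over $y+B(S,1/100)$, and similarly for the rational floors, makes every such floor have $t$-independent first differences along $y+B(S,1/100)$; since genuine monomials of degree $\le3$ are already locally cubic and any product of at most three locally affine pieces has vanishing fourth multiplicative difference, $\psi$ agrees on $y+B(S,1/100)$ with a locally cubic phase $\phi$. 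As $\partial_{h_1}\cdots\partial_{h_4}(k\phi)=k\,\partial_{h_1}\cdots\partial_{h_4}\phi$, being ``locally cubic modulo $1$'' is preserved under multiplication of the phase by the integer $k$, so the large Fourier height $K$ never forces scaled frequencies $k\alpha_i$ into $S$; this gives the proposition.

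The main obstacle is the normal-form step: making the iterated equidistribution/factorization quantitative with $\poly(\log(1/\delta))$ control on dimension and complexity, and in particular eliminating irrational higher-degree data (a floor of a quadratic with irrational leading coefficient is genuinely \emph{not} locally cubic on a linear Bohr set, so it must be removed here rather than later). A persistent secondary nuisance is the bookkeeping that reconciles bracket polynomials --- intrinsically objects over $\mb Z$ --- with Bohr sets inside $\mb Z/N\mb Z$ and that absorbs the smooth part of the factorization; the Fourier-analytic truncation and the averaging over translates are routine by comparison.
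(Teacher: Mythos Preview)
Your high-level plan---nilsequence $\to$ bracket polynomial in Mal'cev coordinates $\to$ single phase via Fourier expansion and pigeonhole $\to$ locally cubic on a Bohr set---matches the paper's, but the step you yourself flag as ``the main obstacle'' is a genuine gap, and the mechanism you propose for it does not work.

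You want iterated equidistribution/factorization to eliminate ``irrational higher-degree data'' so that the only floors appearing are of linear forms $\alpha_i n$. But the factorization theorem cannot do this. If $g$ is already totally equidistributed on $G/\Gamma$ (which is entirely consistent with large correlation with $f$---that is precisely what the inverse theorem produces), factorization returns $g$ unchanged; you are still on a genuine $3$-step nilmanifold and the fundamental-domain coordinates still involve nested floors such as $\lfloor x_j + \phi_j(x_{\le d_1}, -\lfloor x_{\le d_1}\rfloor)\rfloor$ coming from the group law. Equidistribution is a statement about the orbit, not about the algebraic shape of the coordinate functions, and it cannot remove the iterated brackets produced by Baker--Campbell--Hausdorff. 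Your own diagnosis that a floor of a quadratic is not locally cubic on a linear Bohr set is correct; the point is that such terms arise for structural reasons independent of any Diophantine properties, so there is nothing for equidistribution to grip.

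The paper resolves this by a purely algebraic manipulation specific to step $3$ (the ``miracle of small numbers'' in \cref{sec:outline}). Writing the fundamental-domain coordinates explicitly and using the identity
\[-y\lfloor z\rfloor = \lfloor y\rfloor z - yz - \lfloor y\rfloor\lfloor z\rfloor + \{y\}\{z\},\]
together with $[G_2,G_2]=\on{Id}_G$ in a $3$-step group, one rewrites the coordinates so that every $\lfloor\cdot\rfloor$ is applied only to a linear $\alpha_\ell n$; the residual $\{y\}\{z\}$ factors are smoothed and Fourier-expanded away. This is the content of \cref{prop:3-step-conversion}, which also packages the partition of unity, the Fourier truncation, and the $N$-periodicity bookkeeping. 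The paper first upgrades \cref{thm:U4-new} to \cref{thm:U4-modified} so that the nilsequence is $N$-periodic with Lie bracket structure constants divisible by $12$; without this, $\psi_{\exp}(\Gamma)\neq\mb{Z}^d$ in general and the coordinate manipulations do not go through cleanly. Once \cref{prop:3-step-conversion} is available, the proof of \cref{prop:U4-inv-cubic} is a one-line pigeonhole over the index set $I$, in place of your averaging over translates.
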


We refer the reader to \cref{def:regular-bohr-set,def:local-degree-s} for precise definitions of Bohr set and locally cubic phase function. We remark that an analogous result to \cref{prop:3-step-conversion} for higher $U^k$-norms is false; this can be seen by examining the function $e(\{an\{bn\}\}\{cn\} dn)$. We discuss this issue in more detail in \cref{sec:outline}.

\subsection{Proof outline}\label{sec:outline}
The starting point of our work involves combining the density increment strategy of Heath-Brown \cite{Hea87} and Szemer{\'e}di \cite{Sze90}, which was reformulated in a robust manner by Green and Tao \cite{GT09} when proving that $r_4(N)\ll N\exp(-\Omega(\sqrt{\log\log N}))$, with the improved $U^4$-inverse theorem \cref{thm:U4-new} of the first author. The crucial difference between the density increment strategy of Roth \cite{Rot54} versus Heath-Brown \cite{Hea87} and Szemer{\'e}di \cite{Sze90} is that one finds correlations with ``many functions'' to deduce a density increment.

If we apply \cref{thm:U4-new} directly with the density increment strategy as codified by Green and Tao \cite{GT09}, we would at the very least need that, given a polynomial sequence $g(n)$ with $g(0) = \on{id}_{G}$ on a nilmanifold $G/\Gamma$ of degree $3$ with complexity $M$ and dimension $d$, we have 
\[\min_{1\le n\le N}d_{G/\Gamma}(\on{id}_G,g(n))\ll M^{O(d^{O(1)})}N^{-1/d^{O(1)}}.\]
When $g(n)$ is a polynomial sequence of degree $3$ on the torus such results can be derived from work of Schmidt \cite{Sch77} on small fractional parts of polynomials. While directly deriving such a result for nonabelian nilmanifolds does not appear implausible, at present the distribution theory of nilmanifolds only has such ``polynomial in $d$'' dependencies when dealing with test functions having vertical frequency, due to work of the first author \cite{Len23b}.

The crucial step in our work therefore is solving such a Schmidt-type problem for nilmanifolds via representing $3$-step nilmanifolds as sums of exponentials of locally cubic functions on Bohr sets (see \cref{prop:3-step-conversion}). The analogue of such a result for $2$-step nilmanifolds (without bounds) appears in work of Green and Tao \cite[Proposition~2.3]{GT08}. That such a result is true is a miracle of small numbers which is most easily seen via bracket polynomials. (In work of Green and Tao \cite{GT09} regarding $r_4(N)$, one operates with a version of the $U^3$-inverse theorem proven directly for locally quadratic functions on Bohr sets.)

At an informal level, the $U^4$-inverse theorem may be rephrased as follows: if $\snorm{f}_{U^4}$ is large for $1$-bounded $f$, then $f$ correlates with a bracket exponential phase $e(H(n))$ where $H(n)$ is (essentially) a sum of terms of the form 
\[an^3, an^2\{bn\}, an\{bn^2\}, an\{bn\}\{cn\}, an\{\{bn\}\{cn\}\}, \{an\}\{bn\}\{cn\}, an\{bn\{cn\}\} \imod 1\]
plus terms which are obviously of ``lower degree''. By the work of Green and Tao, various lower order terms may be viewed as quadratic functions on Bohr sets. Now note that 
\[\{x\}y + y\{x\} =  xy - \lfloor x\rfloor \lfloor y\rfloor  + \{x\}\{y\}.\]
Therefore 
\[\{x\}y + y\{x\} = xy + \{x\}\{y\} \imod 1.\]
Furthermore note that $e(\{x\}\{y\})$ is after appropriate smoothing a Lipschitz function on $(\mb{R}/\mb{Z})^2$ and therefore is well-approximated by a weighted sum of exponentials of the form $e(kx + \ell y)$ for $k,\ell\in \mb{Z}$. Given this (and noting the analogous fact for $e(\{x\}\{y\}\{z\})$) we may rewrite our basis of degree $3$ functions as 
\[an^3, an^2\{bn\}, an\{bn\}\{cn\} \imod 1;\]
the most crucial of these manipulations is 
\[an\{bn\{cn\}\} = abn^2\{cn\} - \{an\}bn\{cn\} \imod 1.\]

The miracle is that we do not have any nested $\{\cdot\}$ and all of the brackets surround linear functions. Therefore, upon restricting the fractional parts to lie in certain narrow ranges away from the discontinuities in the fraction part (i.e., Bohr set-type conditions) the functions $an^2\{bn\}, an\{bn\}\{cn\}$ are seen to be ``locally cubic''. That is, discrete fourth-order derivatives vanish given that all points in the corresponding $4$-dimensional hypercube lie in an appropriate Bohr set. The existence of such a miracle can be seen by examining carefully all ``fractional part'' expressions required in work of Green--Tao--Ziegler \cite[Appendix~E]{GTZ11} on the $U^4$-inverse theorem. For the $U^5$-inverse theorem and higher, we have the function $e(\{an\{bn\}\}\{cn\} dn)$ and fractional part identities do not allow one to ``remove iterated floor functions''.

To actually prove the desired representation of a step $3$ nilsequence, we partition the nilmanifold via a partition of unity. Operating within a partition of unity, we may manipulate the nilmanifold (as in \cite[Proposition~2.3]{GT08}) via explicitly operating in Mal'cev coordinates of the first kind. This allows us to manipulate the floor and fractional expressions as suggested by the bracket polynomial formalism in the previous paragraphs. Identifying various fundamental domains with the torus and applying Fourier expansion appropriately eventually gives the desired decomposition.

We remark for technical reasons it turns out to be useful to manipulate our nilsequence to be $N$-periodic and living on a nilmanifold for which the Lie bracket structure constants are integers which are sufficiently divisible. The first task is accomplished via appropriately quantifying a construction of Manners \cite{Man14} (see \cref{prop:phi-version}) and the second is accomplished via a ``clearing denominators'' trick on the Mal'cev basis (see \cref{lem:nil-integral}). One can see that such a manipulation might be useful by noting that if the structure constants of $G/\Gamma$ are sufficiently divisible then $\psi_{\exp}(\Gamma) = \mb{Z}^d$ where $\psi_{\exp}$ is the map to Mal'cev coordinates of the first kind (see \cref{lem:structure-constant}). In general $\psi_{\exp}(\Gamma)$ is not even a lattice in $\mb{Q}^d$.

Finally, given a correlation with a locally cubic function on a Bohr set, we are now in position to run the density increment strategy of Heath-Brown \cite{Hea87} and Szemer{\'e}di \cite{Sze90} as codified by Green and Tao \cite{GT09}. Our proof is very close to that of Green and Tao \cite{GT09}, although there are slight simplifications afforded in our situation since the dimension of our Bohr sets are ``quasi-logarithmic''. In particular, it is possible to operate by considering single atoms in the Bohr partition and avoid the use of regular Bohr sets. Our situation at this point is closely analogous to having access to the $U^3$-inverse theorem of Sanders \cite{San12b} and aiming to prove bounds of the form $r_4(N)\ll N \exp(-(\log\log N)^c)$.

\subsection*{Notation}
We use standard asymptotic notation. Given functions $f=f(n)$ and $g=g(n)$, we write $f=O(g)$, $f \ll g$, $g=\Omega(f)$, or $g\gg f$ to mean that there is a constant $C$ such that $|f(n)|\le Cg(n)$ for sufficiently large $n$. We write $f\asymp g$ or $f=\Theta(g)$ to mean that $f\ll g$ and $g\ll f$, and write $f=o(g)$ or $g=\omega(f)$ to mean $f(n)/g(n)\to0$ as $n\to\infty$. Subscripts on asymptotic notation indicate dependence of the bounds on those parameters. We will use the notation $[x] = \{1,2\ldots,\lfloor x\rfloor\}$.

We use the notations of \cref{sec:nilmanifolds} with regards to nilmanifolds. We write $\Delta_hf(x)=f(x)\ol{f(x+h)}$ for the multiplicative discete derivative and $\partial_hf(x)=f(x)-f(x+h)$ for the additive discrete derivative (for functions over appropriate domains and codomains). The Gowers $U^s$-norm on a finite abelian group $G$ is then defined via
\[\snorm{f}_{U^s}^{2^s}:=\mb{E}_{x,h_1,\ldots,h_s\in G}\Delta_{h_1,\ldots,h_s}f(x)\]
for $f\colon G\to\mb{C}$, which is known to be well-defined and a norm for $s\ge 2$ and a seminorm for $s=1$.

\subsection{Organization of paper}
In \cref{sec:local-cubic} we prove the main technical result regarding approximating nilsequences as local cubics on Bohr sets. In \cref{sec:deduc} we deduce the main result via a density increment argument. In \cref{sec:nilmanifolds} we collect various conventions and definitions regarding nilmanifolds. In \cref{sec:schmidt} we essentially reproduce \cite[Appendix~A]{GT09} and note that it verbatim extends to higher degree polynomials. In \cref{sec:periodic}, we manipulate \cref{thm:U4-new} to prove \cref{thm:U4-modified} which gives correlation with a periodic nilsequence where the underlying nilmanifold has appropriately divisible Lie bracket structure constants. Finally, in \cref{sec:deferred} we collect a number of deferred technical lemmas. 

\subsection*{Acknowledgements}
We thank Zach Hunter for various minor corrections. 

\section{Converting nilsequences to local cubics on a Bohr set}\label{sec:local-cubic}
The key idea is to work with a presentation of our nilsequence coming from \cref{thm:U4-new}, and manipulate it into an approximate form composed of locally cubic functions on Bohr sets. 

We will first define Bohr sets formally.
\begin{definition}\label{def:regular-bohr-set}
Given $S\subseteq\mb{Z}/N\mb{Z}$ and $\rho\in(0,1)$, we define the (centered) Bohr set
\[B(S,\rho):=\{x\in\mb{Z}/N\mb{Z}\colon\snorm{\xi x/N}_{\mb{R}/\mb{Z}}<\rho\text{ for all }\xi\in S\}.\]
Given $\alpha=(\alpha_\xi)_{\xi\in S}\in(\mb{R}/\mb{Z})^S$, we define the uncentered Bohr set
\[B_\alpha(S,\rho):=\{x\in\mb{Z}/N\mb{Z}\colon\snorm{\xi x/N-a_\xi}_{\mb{R}/\mb{Z}}<\rho\text{ for all }\xi\in S\}.\]
The parameter $|S|$ is the rank and $\rho$ is the radius.
\end{definition}

We next define the notion of local polynomial structure; we will care specifically about the case of degree $s=3$.
\begin{definition}\label{def:local-degree-s}
Given $S\subseteq G$ and $f\colon S\to H$, we say that $f$ is locally degree $s\ge0$ if for all $x,h_1,\ldots,h_{s+1}$ such that $x+\sum_{j=1}^{s+1}\epsilon_jh_j\in S$ for all $\epsilon=(\epsilon_1,\ldots,\epsilon_{s+1})\in\{0,1\}^{s+1}$, we have
\[\partial_{h_1,\ldots,h_{s+1}}f(x)=0.\]
\end{definition}
\begin{remark}
We will primarily be concerned with $S\subseteq\mb{Z}/N\mb{Z}$ and $H = \mb{R}/\mb{Z}$.
\end{remark}

\begin{proposition}\label{prop:3-step-conversion}
Suppose we are given a degree $3$ nilmanifold $G/\Gamma$ of dimension $d$, complexity $M$, and all Lie bracket structure constants divisible by $12$. Furthermore suppose that $F\colon G/\Gamma\to\mb{C}$ is smooth with Lipschitz norm bounded by $1$ and let $\eta\in(0,1/2)$. 

Then there exist data such that 
\[\sup_{n\in\mb{Z}/N\mb{Z}}\bigg|F(g(n)\Gamma) - \sum_{i\in I}w_i\mbm{1}_{n\in y_i + B(S,1/100)}e(\phi_i(n))\bigg|\le \eta\] 
such that:
\begin{itemize}
    \item $S\subseteq (1/N)\mb{Z}$ with size at most $d+1$;
    \item $I$ is an index set of size at most $O(M \eta^{-1})^{O(d^{O(1)})}$ and $|w_i|\le O(M \eta^{-1})^{O(d^{O(1)})}$ for all $i\in I$;
    \item $y_i\in\mb{Z}/N\mb{Z}$ for all $i\in I$;
    \item $\phi_i\colon\mb{Z}/N\mb{Z}\to\mb{R}/\mb{Z}$ is locally cubic on $y_i + B(S,1/100)$.
\end{itemize}
\end{proposition}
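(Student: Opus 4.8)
The plan is to adapt the argument of Green and Tao \cite[Proposition~2.3]{GT08} for $2$-step nilmanifolds, supplying both the $3$-step bracket-polynomial identities sketched in \cref{sec:outline} and quantitative control of every parameter. First, since all Lie bracket structure constants are integers divisible by $12$, \cref{lem:structure-constant} (together with the clearing-denominators normalization behind \cref{lem:nil-integral}) gives $\psi_{\exp}(\Gamma)=\mb{Z}^d$, so $[0,1)^d$ is a fundamental domain for $G/\Gamma$ in Mal'cev coordinates of the first kind. Writing $\wt g(n)$ for the representative of $g(n)$ with coordinates in $[0,1)^d$, we have $F(g(n)\Gamma)=\wt F(\psi_{\exp}(\wt g(n)))$, where $\wt F$ is the $\mb{Z}^d$-periodic extension of $F\circ\psi_{\exp}^{-1}$ and is Lipschitz (norm $O(L)$) on each translate of the open unit cube. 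I would fix a smooth partition of unity $1=\sum_j\chi_j$ on $(\mb{R}/\mb{Z})^d$ subordinate to a cover by boxes of side $\tfrac1{100}$, with $O(1)^d$ pieces of which $O(1)^d$ meet any given small box; replacing $\wt F$ by $\wt F\chi_j$ lets me assume $\wt F$ is $O(L)$-Lipschitz on $(\mb{R}/\mb{Z})^d$ and supported on a box of side $\tfrac1{100}$. Fourier expanding and truncating (e.g.\ against a Fej\'er kernel) then reduces matters, up to a uniform error $\eta$ and at the cost of $O(Ld/\eta)^{O(d)}$ characters $e(\xi\cdot x)$ with $\xi\in\mb{Z}^d$ and weights of comparable size, to analyzing $\mbm{1}_{\wt g(n)\in\text{box}}\,e(\xi\cdot\psi_{\exp}(\wt g(n)))$ for a single frequency $\xi$.

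Next I would compute $\psi_{\exp}(\wt g(n))$ explicitly. Write $g(n)=\exp\!\big(\sum_i t_i(n)X_i\big)$; by the polynomial-sequence structure each $t_i(n)$ is a polynomial in $n$ whose degree equals the filtration degree of $X_i$, so layer-$1$ coordinates are affine in $n$, layer-$2$ quadratic, layer-$3$ cubic. The reduction $\wt g(n)=g(n)\gamma(n)^{-1}$ with $\gamma(n)\in\Gamma$ is carried out one layer at a time; since the group is $3$-step the Baker--Campbell--Hausdorff formula terminates, and the $\tfrac12$ and $\tfrac1{12}$ occurring there combine with the ($12$-divisible, integral) structure constants so that $\psi_{\exp}(\wt g(n))$ has layer-$1$ entries $\{t_i(n)\}$, layer-$2$ entries that are polynomials in $n$ and in the $\lfloor t_i(n)\rfloor$ over layer-$1$ indices $i$, and layer-$3$ entries that are polynomials in $n$, the $\lfloor t_i(n)\rfloor$, and the floors of the layer-$2$ expressions, i.e.\ nested bracket cubics. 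All coefficients arising have height $M^{O(d^{O(1)})}$, which is where the bounds on $|I|$ and $|w_i|$ come from.

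Substituting into $e(\xi\cdot\psi_{\exp}(\wt g(n)))$ yields $e$ of an integer combination (height $M^{O(d^{O(1)})}$) of nested bracket cubics, and the heart of the argument is to remove the nesting. Working modulo $1$, I would apply the identities of \cref{sec:outline}, chiefly
\[
\{x\}y+y\{x\}\equiv xy+\{x\}\{y\}\qquad\text{and}\qquad an\{bn\{cn\}\}\equiv abn^2\{cn\}-\{an\}\,bn\,\{cn\},
\]
together with their analogues, to eliminate every iterated floor, and replace each surviving factor $e(\{x\}\{y\})$ or $e(\{x\}\{y\}\{z\})$ --- which is Lipschitz on a torus once the relevant coordinates are restricted to the support box away from their discontinuities --- by a truncated Fourier series, folding the new frequencies into the degree-$\le1$ part. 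The ``small-numbers miracle'' is exactly that no nested floor survives and every remaining floor surrounds an affine function of $n$; after collecting terms the exponent is an integer combination of $an^3$, $an^2\{bn\}$, $an\{bn\}\{cn\}\bmod 1$ with $a,b,c\in(1/N)\mb{Z}$ (the $b,c$ being layer-$1$ slopes) plus honestly quadratic or linear terms, which by the work of Green and Tao cause no trouble. Finally, the box condition on $\wt g(n)$ has layer-$1$ part $\snorm{\alpha_i n-\beta_i}_{\mb{R}/\mb{Z}}<\tfrac1{100}$ over the layer-$1$ slopes $\alpha_i$, i.e.\ membership in a translate of $B(S,1/100)$ where $S$ consists of these slopes (possibly augmented by one auxiliary frequency), so $|S|\le d+1$; the higher-layer box conditions become redundant on such an atom once the brackets are un-nested (or may be absorbed into the weights). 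On a single Bohr atom, each surviving $\lfloor bn\rfloor$ is an integer-valued affine-linear function of the cube parameters whenever all $2^4$ vertices lie in the atom, so each building block restricts to a genuine cubic polynomial in $n$ on that $4$-cube, hence has vanishing fourth additive derivative mod $1$; thus the exponent $\phi_i$ is locally cubic on $y_i+B(S,1/100)$ in the sense of \cref{def:local-degree-s}. Summing over the $O(M\eta^{-1})^{O(d^{O(1)})}$ characters and partition pieces gives the stated decomposition.

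The main obstacle is the un-nesting step, which must be performed in tandem with the localization: one needs the torus on which each discontinuous factor $e(\{x\}\{y\}(\{z\}))$ is expanded to be precisely the one on which the support restriction makes it Lipschitz, all while keeping the number and size of the resulting modes within $M^{O(d^{O(1)})}$. This is where the $3$-step structure is indispensable, and where the contrast with the $U^5$ setting --- the function $e(\{an\{bn\}\}\{cn\}dn)$, which fractional-part identities cannot disentangle --- shows the argument is special to degree $3$. A secondary point is doing the Baker--Campbell--Hausdorff bookkeeping of the second step with all bounds only polynomial-in-$d$ in the exponent, and using the integrality/periodicity normalization to guarantee that the building blocks, hence the $\phi_i$, genuinely descend to functions on $\mb{Z}/N\mb{Z}$.
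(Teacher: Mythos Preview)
Your proposal is the paper's proof: partition of unity to localize, Fourier expansion on the torus, the un-nesting identity (the paper applies it directly in Mal'cev coordinates to the bilinear form $\phi_j$, using $[G_2,G_2]=\on{Id}_G$ to kill the layer-$2\times$layer-$2$ block, rather than in bracket-polynomial language, but it is the same identity), a second Fourier expansion to remove the residual $\{\cdot\}\{\cdot\}$ factors, and the local-cubicity check via local linearity of $\lfloor\alpha_\ell n\rfloor$ on the Bohr atom. The one step you underweight as a ``secondary point'' is the paper's entire Step~4: an auxiliary partition of unity in $n/N$ is required to make the exponents $H_k$ into $N$-periodic functions, and this is exactly where the extra frequency $1/N$ enters $S$ (giving $|S|\le d_1+1\le d+1$)---without it the $\phi_i$ do not descend to $\mb{Z}/N\mb{Z}$, so this is not optional bookkeeping but a genuine part of the argument.
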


Then, \cref{prop:U4-inv-cubic} directly follows from a slightly modified version of \cref{thm:U4-new} (namely \cref{thm:U4-modified}) and \cref{prop:3-step-conversion}. \cref{thm:U4-modified} allows one to essentially assume that the underlying nilsequence is $N$-periodic and various structure constants are sufficiently divisible.

\begin{proof}[Proof of \cref{prop:U4-inv-cubic}]
By \cref{thm:U4-modified}, there exists a nilmanifold $G/\Gamma$ with function $F$ and a polynomial sequence $g(n)$ such that
\[\big|\mb{E}_{n\in \mb{Z}/N\mb{Z}}f(n)\ol{F(g(n)\Gamma)}\big|\ge 1/\exp((\log(1/\delta))^C) =: 2\eta\]
with $g(0) = \on{id}_G$ and $G/\Gamma$ having dimension $d$, complexity at most $M$ and all structure constants divisible by $12$ with 
\begin{align*}
d&\le (\log(1/\delta))^{O(1)}\\
M&\le \exp((\log(1/\delta))^{O(1)}).
\end{align*}

We now approximate $F(g(n)\Gamma)$ by a function $H(n)$ such that $\sup_{n\in \mb{Z}/N\mb{Z}}|F(g(n)\Gamma)-H(n)|\le\eta$ using \cref{prop:3-step-conversion}. As $f$ is $1$-bounded, we immediately have 
\[\big|\mb{E}_{n\in\mb{Z}/N\mb{Z}}f(n)\ol{H(n)}\big|\ge \big|\mb{E}_{n\in\mb{Z}/N\mb{Z}}f(n)\ol{F(g(n)\Gamma)}\big| - \mb{E}_{n\in\mb{Z}/N\mb{Z}}|H(n)-F(g(n)\Gamma)|\ge \eta.\]
We have some additional guarantees on the structure of $H$ (in particular, some associated data $S,I,w_i,y_i,\phi_i$). Applying Pigeonhole on the index set $I$ and using that the $w_i$ are sufficiently bounded, there exist $y\in\mb{Z}/N\mb{Z}$, $S$ a set of at most size $d+1$, $\rho=1/100$, and $\phi\colon\mb{Z}/N\mb{Z}\to\mb{R}/\mb{Z}$ locally cubic on $y + B(S,\rho)$ such that 
\[\big|\mb{E}_{n\in\mb{Z}/N\mb{Z}}f(n) \mbm{1}_{n\in y+ B(S,\rho)}e(-\phi(n))\big|\ge 1/\exp((\log(1/\delta))^{O(1)}). \qedhere\]
\end{proof}

In order to prove \cref{prop:3-step-conversion}, we will need a number of quantitative and structural lemmas about nilsequences. These arguments are deferred to \cref{sec:deferred}. We first require the following quantitative partition of unity for nilmanifolds.

\begin{lemma}\label{lem:nilmanifold-partition-of-unity}
Fix $\eps\in (0,1/2)$ and a nilmanifold $G/\Gamma$ of degree $s$, dimension $d$, and complexity $M$. There exists an index set $I$ and a collection of nonnegative smooth functions $\tau_j\colon G/\Gamma\to\mb{R}$ for $j\in I$ such that:
\begin{itemize}
    \item $L:= (M/\eps)^{O_s(d^{O_s(1)})}$;
    \item For all $g\in G$, we have $\sum_{j\in I}\tau_j(g\Gamma) = 1$;
    \item $|I|\le L$;
    \item For each $j\in I$, there exists $\beta\in [-L,L]^{d}$ so that for any $g\Gamma \in \on{supp}(\tau_j)$ there exist $g'\in g\Gamma$ such that $\log_G(g')\in \prod_{i=1}^d[\beta_i-\eps,\beta_i+\eps]$;
    \item $\tau_j$ are $L$-Lipschitz on $G/\Gamma$.
\end{itemize}
\end{lemma}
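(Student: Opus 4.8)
The plan is to construct the partition of unity explicitly in Mal'cev coordinates of the first kind, i.e. by pushing forward a standard partition of unity on a fundamental domain of $\Gamma$ in $\mb{R}^d$ under the coordinate map $\log_G$. Concretely, fix the Mal'cev basis $\mc{X}$ of complexity $M$ associated to $G/\Gamma$, and recall that $\psi_{\exp}\colon G\to\mb{R}^d$ identifies $G$ with $\mb{R}^d$ and $\Gamma$ with a lattice contained in some bounded-height refinement of $\mb{Z}^d$; in particular there is a fundamental domain $D\subseteq\mb{R}^d$ for the $\Gamma$-action contained in a box $[-CM^{O(d)},CM^{O(d)}]^d$ (this is where the crude but admissible bounds enter). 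First I would take a smooth bump $\chi\colon\mb{R}\to[0,1]$ supported on an interval of length $\eps$ and translates $\chi_k(t)=\chi(t-k\eps)$ so that $\sum_{k\in\mb{Z}}\chi_k\equiv 1$, each $\chi_k$ being $O(1/\eps)$-Lipschitz; then set $\wt\tau_{\mbf{k}}(x)=\prod_{i=1}^d\chi_{k_i}(\psi_{\exp}(x)_i)$ on $G$, indexed by $\mbf{k}\in\mb{Z}^d$ lying in the relevant box, of which there are at most $(CM^{O(d)}/\eps)^d$. Finally I would symmetrize over $\Gamma$ in the standard way, $\tau_{\mbf{k}}(g\Gamma)=\sum_{\gamma\in\Gamma}\wt\tau_{\mbf{k}}(g\gamma)$, noting that the sum is locally finite because the supports of the $\wt\tau_{\mbf{k}}$ are bounded and $\Gamma$ is discrete with covolume controlled by $M$; this descends to a well-defined smooth function on $G/\Gamma$ and still sums to $1$ over $\mbf{k}$.

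The verification then proceeds by checking each bullet. The sum-to-one property is immediate from $\sum_k\chi_k\equiv 1$ applied coordinatewise and from the $\Gamma$-averaging. The bound $|I|\le L$ follows from counting lattice points $\mbf{k}$ in a box of side $M^{O(d)}/\eps$, which gives $(M/\eps)^{O(d^{O(1)})}$ after absorbing the $M^{O(d)}$ factor; the localization bullet (each $\tau_j$ supported where $\log_G$ lands in a small cube $\prod[\beta_i-\eps,\beta_i+\eps]$ after a $\Gamma$-shift) is automatic from the construction, with $\beta=\mbf{k}\eps$. For the Lipschitz bound I would use that $\psi_{\exp}$ and its inverse are polynomial maps with coefficients of height $M^{O(d)}$ on the relevant bounded region, so $\psi_{\exp}$ is $M^{O(d)}$-Lipschitz there; composing with the $O(1/\eps)$-Lipschitz bumps and summing over the $M^{O(d)}$-many $\gamma\in\Gamma$ whose translates meet a fixed ball yields an overall Lipschitz constant of the shape $(M/\eps)^{O(d^{O(1)})}=L$, where one also invokes the standard comparison between the metric $d_{G/\Gamma}$ and the Euclidean metric on a fundamental domain (e.g. from the nilmanifold conventions in \cref{sec:nilmanifolds}). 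The degree $s$ enters only through the $O_s(\cdot)$ in the height bounds on $\psi_{\exp}$, $\psi_{\exp}^{-1}$, and the injectivity radius, all of which are recorded in the nilmanifold preliminaries.

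The main obstacle is purely quantitative bookkeeping rather than conceptual: controlling the height of the polynomial maps $\psi_{\exp},\psi_{\exp}^{-1}$ and the covolume / injectivity-radius of $\Gamma$ in terms of $M$ and $d$ with explicit $(M/\eps)^{O(d^{O(1)})}$-type bounds, and in particular bounding the number of lattice shifts $\gamma\in\Gamma$ whose supports overlap a fixed ball (which controls both the local finiteness of the sum and the final Lipschitz constant). These are standard consequences of the Baker--Campbell--Hausdorff formula together with the complexity-$M$ hypothesis on the Mal'cev basis, and analogous estimates appear in the work of Green and Tao on the quantitative distribution of polynomial orbits; I would cite those and the conventions collected in \cref{sec:nilmanifolds} rather than re-deriving them. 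Since the lemma only asks for bounds of the crude form $(M/\eps)^{O_s(d^{O_s(1)})}$, no delicate optimization is needed — one simply takes the worst exponent appearing in the BCH expansion and the lattice-point count.
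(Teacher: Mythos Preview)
Your approach has a genuine gap. You assert that $\psi_{\exp}$ identifies $\Gamma$ with a lattice in $\mb{R}^d$, but this is false in general: as the paper explicitly remarks (just before \cref{lem:structure-constant}), $\psi_{\exp}(\Gamma)$ need not even be a lattice in $\mb{Q}^d$ unless the structure constants are sufficiently divisible. Even if you switch to second-kind coordinates, where $\psi(\Gamma)=\mb{Z}^d$ by definition, the right action $g\mapsto g\gamma$ does \emph{not} correspond to translation by $\psi(\gamma)$ on coordinates --- it is a nontrivial polynomial map coming from Baker--Campbell--Hausdorff. Consequently your sum-to-one argument breaks: with $I$ a finite box of indices, $\sum_{\mbf{k}\in I}\sum_{\gamma\in\Gamma}\wt\tau_{\mbf{k}}(g\gamma)=\sum_{\gamma}\Phi(g\gamma)$ where $\Phi$ is a smoothed indicator of a box in coordinate space, and there is no reason this $\Gamma$-periodization equals $1$, since the $\Gamma$-translates of the box do not tile $\mb{R}^d$ by translation. (Taking $I=\mb{Z}^d$ instead makes the sum diverge.) A normalization step would fix the sum but then the Lipschitz bound requires a uniform lower bound on the denominator, which you have not arranged.

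The paper's construction is genuinely different and is designed around exactly this obstacle. It works in second-kind coordinates, where for each shift $\beta$ there is a well-defined map $T_\beta\colon G\to\Gamma$ sending $g$ to the unique $\gamma$ with $\psi(g\gamma)\in[-\beta,1-\beta)^d$; this map is discontinuous at the boundary of the shifted fundamental domain. The key trick is to \emph{average over the shift $\beta$} against a bump $f(\beta)$, setting
\[\tau_{j_1,\ldots,j_d}(g\Gamma)=\int_{\mb{R}}\prod_{k=1}^d H_{j_k}\big(\psi_k(gT_\beta(g))\big)\,f(\beta)\,d\beta,\]
where the $H_j$ are one-dimensional bumps summing to $1$ on $[-3/2,3/2]$. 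For each fixed $\beta$ the inner product already sums to $1$ over indices (since $gT_\beta(g)$ lies in a unit cube), so the sum-to-one property is immediate without any $\Gamma$-symmetrization or normalization. The $\beta$-average then smooths out the boundary discontinuity of $T_\beta$: the Lipschitz bound is obtained by showing that $\mb{P}_\beta[T_\beta(\wt{x})\neq T_\beta(\wt{y})]$ is small when $\wt{x},\wt{y}$ are close. Only at the very end does one convert from second-kind to first-kind coordinates for the support statement, using that $\psi_{\exp}\circ\psi^{-1}$ is a bounded-height polynomial.
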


We also require the algebraic fact that if the Lie bracket structure constants of $G/\Gamma$ are sufficiently divisible then $\psi_{\exp}(\Gamma) = \mb{Z}^d$.
\begin{lemma}\label{lem:structure-constant}
There exists and integer $C_s\ge 1$ such that the following holds. Fix a nilmanifold $G/\Gamma$ of degree $s$ and a Mal'cev basis $\mc{X}$ such that all Lie bracket structure constants of $G/\Gamma$ are divisible by $C_s$. Then $\psi_{\exp}(\Gamma) = \mb{Z}^d$.
\end{lemma}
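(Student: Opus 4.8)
The plan is to recall that a Mal'cev basis $\mc{X} = \{X_1,\ldots,X_d\}$ is adapted to the filtration and by definition has the property that $\exp$ restricted to the $\mb{Z}$-span of $\mc{X}$ lands in $G$, and conversely $\psi_{\exp} = \log_G$ read in these coordinates maps $\Gamma$ into $\mb{Q}^d$. The containment $\mb{Z}^d \subseteq \psi_{\exp}(\Gamma)$ is essentially automatic from the Mal'cev basis axioms: one shows inductively (from the top of the lower central series downward) that $\exp(X_i) \in \Gamma$ for each $i$, using that the $\exp(X_i)$ with $i$ in the ``tail'' generate the relevant subgroup of $\Gamma$ and that the Baker--Campbell--Hausdorff formula expresses any product $\exp(n_1 X_1)\cdots\exp(n_d X_d)$ with $n_i\in\mb{Z}$ as $\exp$ of an element whose coordinates lie in $\mb{Z}$ when the structure constants are integers (which they are after the clearing-denominators normalization). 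So the real content is the reverse containment $\psi_{\exp}(\Gamma)\subseteq\mb{Z}^d$, i.e.\ that every element of $\Gamma$ has \emph{integer} exponential coordinates once we assume all Lie bracket structure constants are divisible by a suitable $C_s$.

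First I would set up the BCH comparison between Mal'cev coordinates of the first kind, $\psi_{\exp}(\exp(t_1 X_1 + \cdots + t_d X_d)) = (t_1,\ldots,t_d)$, and Mal'cev coordinates of the second kind, $\psi_{\mr{mult}}(\exp(u_1 X_1)\cdots\exp(u_d X_d)) = (u_1,\ldots,u_d)$. By the defining property of a Mal'cev basis, $\Gamma$ is precisely the set of points with $\psi_{\mr{mult}}\in\mb{Z}^d$. The Baker--Campbell--Hausdorff formula gives a transition map $\psi_{\exp}\circ\psi_{\mr{mult}}^{-1}\colon\mb{Q}^d\to\mb{Q}^d$ which is a polynomial map with rational coefficients; each such coefficient is a universal rational times a product of structure constants coming from iterated brackets (the degree-one part is the identity, and the higher-degree corrections all carry at least one bracket). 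Because $G$ has step $s$, the BCH series truncates after $s$ terms, so the relevant denominators are bounded: every coefficient is of the form (fixed rational with denominator dividing some $D_s$) times (a monomial in the structure constants). Hence if we choose $C_s = D_s$ (or a suitable multiple, e.g.\ $\mr{lcm}$ of all denominators appearing in BCH up to degree $s$), then divisibility of every structure constant by $C_s$ forces the transition polynomial to have integer coefficients, so it maps $\mb{Z}^d$ into $\mb{Z}^d$. Applying this to $\Gamma = \psi_{\mr{mult}}^{-1}(\mb{Z}^d)$ gives $\psi_{\exp}(\Gamma)\subseteq\mb{Z}^d$.

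Combined with the easy inclusion above, this yields $\psi_{\exp}(\Gamma) = \mb{Z}^d$. I would organize the write-up around two claims: (a) the explicit shape of the BCH transition map and the statement that its coefficients lie in $\frac{1}{D_s}\mb{Z}[\text{structure constants}]$; and (b) the deduction that both inclusions hold. I expect the main obstacle to be bookkeeping in claim (a): one must be careful that the structure constants appearing are exactly the $c_{ijk}$ defined via $[X_i,X_j] = \sum_k c_{ijk} X_k$ and their iterates (not, say, structure constants of some rescaled basis), and that when the basis is only \emph{weakly} filtered one still gets truncation at step $s$; tracking how many bracket factors each BCH term carries (at least one, which is what makes the argument work) is the delicate point. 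A clean way to handle this is to expand BCH degree by degree and observe that the degree-$m$ term, viewed as a polynomial in the $u_i$, has coefficients that are $\mb{Q}$-linear combinations of $(m-1)$-fold iterated brackets of the $X_i$ expressed in the basis $\mc{X}$, hence lie in $\frac{1}{m!}\mb{Z}[c_{ijk}]$ (using that BCH denominators up to degree $m$ divide $m!$ or a small multiple thereof); taking $C_s := \mr{lcm}(1,2,\ldots,s)$ times an absolute constant then suffices. The argument is entirely standard and is the quantitative analogue of the classical fact that a nilpotent Lie group with rational structure constants admits a lattice; here we are merely checking the effective version with an explicit divisibility condition.
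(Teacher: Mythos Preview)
Your approach is essentially the same as the paper's: both inclusions are deduced from the Baker--Campbell--Hausdorff formula together with the fact that every degree-$m$ term (for $m\ge 2$) carries $m-1$ structure-constant factors, so divisibility by a suitable $C_s$ clears all BCH denominators. The paper runs this as two short inductions (peeling off one $\exp(t_jX_j)$ at a time), while you package it as ``the transition polynomial $\psi_{\exp}\circ\psi_{\mr{mult}}^{-1}$ has coefficients in $\frac{1}{D_s}\mb{Z}[c_{ijk}]$''; these are the same argument.

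There is, however, a labeling slip that leaves one direction unaddressed. You call $\mb{Z}^d\subseteq\psi_{\exp}(\Gamma)$ the ``easy'' inclusion, but the argument you give for it---that BCH collapses $\exp(n_1X_1)\cdots\exp(n_dX_d)$ to $\exp$ of an integer-coordinate vector---actually proves $\psi_{\exp}(\Gamma)\subseteq\mb{Z}^d$. You then call $\psi_{\exp}(\Gamma)\subseteq\mb{Z}^d$ the ``real content'' and prove it again, more carefully, via the transition map. So both paragraphs establish the same inclusion, and the reverse inclusion $\psi_{\exp}^{-1}(\mb{Z}^d)\subseteq\Gamma$ is never touched. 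The fix is immediate: the inverse transition map $\psi_{\mr{mult}}\circ\psi_{\exp}^{-1}$ is also a polynomial whose nonlinear coefficients are rationals of bounded denominator times products of structure constants (seen by iteratively applying BCH to $\exp(-u_1X_1)\exp(\sum u_iX_i)$, exactly as the paper does), so the same divisibility hypothesis makes it integer-valued on $\mb{Z}^d$. Once you swap the labels and add this one sentence, your proof is complete and matches the paper's.
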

\begin{remark}
We may take $C_3 = 12$.
\end{remark}

Next we need the conversion between the nilmanifold distance and distance in coordinates of the first kind (on $\mb{R}^d$).
\begin{lemma}\label{lem:first-to-torus}
Fix a nilmanifold $G/\Gamma$ of degree $s$, dimension $d$, and complexity $M$. If $\snorm{\psi_{\exp}(x) - \psi_{\exp}(y)}_\infty\le\eps$ and $\snorm{\psi_{\exp}(x)}_\infty\le L$ then
\[d_{G/\Gamma}(x\Gamma,y\Gamma)\le\eps(LM)^{O_s(d^{O_s(1)})}.\]
\end{lemma}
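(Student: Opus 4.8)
The plan is to reduce the nilmanifold metric $d_{G/\Gamma}$ to an explicit metric on $\mathbb{R}^d$ via Mal'cev coordinates of the second kind, and then compare coordinates of the first kind to coordinates of the second kind using the Baker--Campbell--Hausdorff formula. First I would recall the standard comparison (as in Green--Tao's ``Quantitative behaviour of polynomial orbits on nilmanifolds'', adapted to the complexity notation of \cref{sec:nilmanifolds}): the left-invariant metric $d_G$ on $G$ is comparable, up to factors of $M^{O_s(d^{O_s(1)})}$, to $\min_{\gamma\in\Gamma}\snorm{\psi(x\gamma) - \psi(y\gamma)}$ where $\psi$ is coordinates of the second kind, and $d_{G/\Gamma}(x\Gamma,y\Gamma)$ is the infimum of $d_G(x',y')$ over lifts, which in particular is at most $d_G$ applied to the given representatives $x,y$ (taking $\gamma = \mathrm{id}$). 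So it suffices to bound $d_G(x,y)$ in terms of $\snorm{\psi_{\exp}(x) - \psi_{\exp}(y)}_\infty$ and $\snorm{\psi_{\exp}(x)}_\infty$.

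Next I would pass through the group: writing $x = \exp(u)$, $y = \exp(v)$ with $\snorm{u-v}_\infty \le \eps$ and $\snorm{u}_\infty \le L$ (here $u,v$ are the vectors of first-kind coordinates against the Mal'cev basis), we have $y^{-1}x = \exp(\mathrm{BCH}(-v,u))$. The BCH formula expresses $\mathrm{BCH}(-v,u)$ as $u - v$ plus a sum of iterated Lie brackets of $u$ and $v$, each of which contains at least one factor of $u-v$ once one collects terms and uses that brackets are antisymmetric — more carefully, $\mathrm{BCH}(-v,u) = (u-v) + (\text{terms each linear in } (u-v) \text{ with the remaining factors among } u, v)$, which follows since $\mathrm{BCH}(-v,v) = 0$ and one can interpolate. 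Since $G$ is $s$-step nilpotent the BCH sum is finite, with at most $O_s(1)$ terms; each iterated bracket of vectors of sup-norm at most $L$ picks up a factor bounded by $(LM)^{O_s(1)}$ per bracket (the $M$ absorbing the structure constants and the change between the Mal'cev basis and a basis adapted to the lower central series), so in total $\snorm{\log_G(y^{-1}x)}_\infty \le \eps (LM)^{O_s(d^{O_s(1)})}$. Translating back, $d_G(x,y) = d_G(\mathrm{id}, y^{-1}x) \le \snorm{\log_G(y^{-1}x)} \cdot M^{O_s(d^{O_s(1)})} \le \eps (LM)^{O_s(d^{O_s(1)})}$, which is the claimed bound.

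The main obstacle I anticipate is bookkeeping the polynomial-in-$d$ exponents cleanly: one must track how many times the structure constants and basis-change matrices are multiplied together across the BCH expansion and across the conversions between first-kind coordinates, second-kind coordinates, and the left-invariant metric, and verify that all of this stays of the form $(LM)^{O_s(d^{O_s(1)})}$ rather than, say, $M^{O_s(d)}$ blowing up. This is precisely the kind of estimate carried out in \cite{Len23b} and in the appendices of Green--Tao; the content here is purely that the dependence on $d$ remains polynomial, which is safe because each BCH term involves only $O_s(1)$ brackets and the dimension enters only through norms of $d$-dimensional vectors and $d\times d$ matrices raised to $O_s(1)$ powers. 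I would organize the write-up so that the single-step estimate ``$\snorm{v} \le M^{O_s(d^{O_s(1)})}$ of coordinates $\Rightarrow$ bracket has coordinates $\le M^{O_s(d^{O_s(1)})}$'' is isolated as a sub-claim and then iterated a bounded number of times.
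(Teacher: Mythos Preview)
Your approach is essentially correct, with one minor slip: the metric $d_G$ defined in \cref{def:Lip} is \emph{right}-invariant (since $x_ix_{i+1}^{-1}$ is unchanged under $x_i\mapsto x_ig$), not left-invariant, so you should work with $xy^{-1}$ rather than $y^{-1}x$. The BCH interpolation argument goes through unchanged with this swap.

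Compared with the paper's proof, your route is a slight detour. The paper simply observes that the change-of-coordinates map $\psi\circ\psi_{\exp}^{-1}$ is a polynomial of degree $O_s(1)$ with coefficients bounded by $O(M)^{O_s(d^{O_s(1)})}$ (citing \cite[Lemma~B.1]{Len23}), applies it directly to $x$ and $y$ to bound $\snorm{\psi(x)-\psi(y)}_\infty$, and then invokes \cite[Lemma~B.3]{Len23} to pass to $d_G$. Your approach instead computes the first-kind coordinates of $xy^{-1}$ via BCH and then still needs to convert those to the metric --- which is exactly the same polynomial conversion the paper uses up front. So the BCH step, while correct, is redundant: the polynomial bound on $\psi\circ\psi_{\exp}^{-1}$ already encodes all the bracket bookkeeping you are doing by hand. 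The paper's version is shorter because it outsources this to an existing lemma rather than reproving a special case.
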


We also require the following basic estimate regarding Fourier expansion of Lipschitz functions on the torus; this follows immediately by quantifying the proof in \cite[Proposition~1.1.13]{Tao12} (see e.g.~\cite[Lemma~A.8]{PSS23}).
\begin{lemma}\label{lem:torus-expand}
Fix $0<\eps<1/2$, and let $F\colon(\mb{R}/\mb{Z})^d\to\mb{C}$ with $\snorm{F}_{\mr{Lip}}\le L$ with metric $d(x,y) = \max_{1\le i\le d}\snorm{x_i-y_i}_{\mb{R}/\mb{Z}}$ for $x,y\in (\mb{R}/\mb{Z})^d$. There exists an absolute constant $C>0$ such that there exist $c_{\xi}$ with $\sum_{\xi}|c_{\xi}|\le (3CLd\eps^{-1})^{5d}$ and
\[\sup_{x\in (\mb{R}/\mb{Z})^d}\bigg|F(x) -  \sum_{|\xi|\le (CLd\eps^{-1})^{2}}c_{\xi}e(\xi\cdot x)\bigg|\le \eps.\]
\end{lemma}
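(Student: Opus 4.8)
The plan is to quantify the classical mollification argument (exactly as in \cite[Proposition~1.1.13]{Tao12}; cf.\ \cite[Lemma~A.8]{PSS23}), by convolving $F$ with a tensor power of a one-dimensional Fej\'{e}r-type kernel on the torus. Throughout I will take the convention $\snorm{\cdot}_{\mathrm{Lip}}=\snorm{\cdot}_\infty+\mathrm{Lip}(\cdot)$ and assume $L\ge 1$ (this is the only regime relevant to the applications and is implicit in the cited references), so that $\snorm{F}_\infty\le L$ and $F$ is $L$-Lipschitz for the metric $d(x,y)=\max_i\snorm{x_i-y_i}_{\mb{R}/\mb{Z}}$.

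Concretely, let $J_M\colon\mb{R}/\mb{Z}\to\mb{R}_{\ge0}$ be the normalized Jackson kernel of degree $2M-2$, i.e.\ $J_M(t)=c_M\bigl(\sin(\pi Mt)/\sin(\pi t)\bigr)^4$ with $c_M\asymp M^{-3}$ chosen so that $\int_{\mb{R}/\mb{Z}}J_M=1$. This is a nonnegative trigonometric polynomial of degree $2M-2$, and from the pointwise bound $J_M(t)\ll\min(M,\,M^{-3}\snorm{t}_{\mb{R}/\mb{Z}}^{-4})$ one reads off the first-moment estimate $\int_{\mb{R}/\mb{Z}}\snorm{t}_{\mb{R}/\mb{Z}}J_M(t)\,dt\ll M^{-1}$. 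I then set $K(x):=\prod_{i=1}^d J_M(x_i)$, a nonnegative trigonometric polynomial on $(\mb{R}/\mb{Z})^d$ with $\int K=1$ and $\wh K(\xi)=\prod_i \wh{J_M}(\xi_i)$, so $|\wh K(\xi)|\le 1$ and $\wh K(\xi)=0$ unless $|\xi_i|\le 2M-2$ for all $i$. Put $G:=F\ast K$, so that $\wh G(\xi)=\wh F(\xi)\wh K(\xi)$ has the same frequency support.

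The two estimates are then routine. For the approximation error, since $K\ge 0$ has total mass $1$ we have $F(x)-G(x)=\int_{(\mb{R}/\mb{Z})^d}\bigl(F(x)-F(x-y)\bigr)K(y)\,dy$, whence by $L$-Lipschitzness
\[\snorm{F-G}_\infty\le L\int\snorm{y}_\infty K(y)\,dy\le L\sum_{i=1}^d\int_{\mb{R}/\mb{Z}}\snorm{y_i}_{\mb{R}/\mb{Z}}J_M(y_i)\,dy_i\ll\frac{Ld}{M}.\]
Choosing $M\asymp Ld\eps^{-1}$ (permissible as $Ld\eps^{-1}\ge 1$) makes this at most $\eps$, and for this $M$ the trigonometric polynomial $G$ has all frequency coordinates of size $\ll Ld\eps^{-1}$, which for a large enough absolute constant $C$ lie in $\{|\xi|\le(CLd\eps^{-1})^2\}$. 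Finally, taking $c_\xi:=\wh G(\xi)$ and using $|\wh F(\xi)|\le\snorm{F}_\infty\le L$ and $|\wh K(\xi)|\le 1$, I bound the coefficient sum crudely by (number of frequencies in the support of $\wh G$) times $L$, namely $\sum_\xi|c_\xi|\le L\,(4M+1)^d\le(O(Ld\eps^{-1}))^{d+1}\le(3CLd\eps^{-1})^{5d}$ after enlarging $C$ (again using $Ld\eps^{-1}\ge 1$).

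I do not expect a genuine obstacle here: this is the standard Fej\'{e}r/Jackson mollification, and the only points needing care are the quantitative first-moment bound for the one-dimensional kernel (the fourth power in the Jackson kernel, rather than the square in the Fej\'{e}r kernel, removes a stray logarithmic factor, though the slack in the exponents would tolerate it anyway) and the bookkeeping of the $d$-, $L$- and $\eps$-dependence of the constants into the stated shape. In the write-up I would therefore simply run the proof of \cite[Proposition~1.1.13]{Tao12} while tracking constants.
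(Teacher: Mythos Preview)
Your proposal is correct and takes essentially the same approach as the paper: the paper does not give its own proof but simply states that the result ``follows immediately by quantifying the proof in \cite[Proposition~1.1.13]{Tao12} (see e.g.~\cite[Lemma~A.8]{PSS23}),'' which is precisely what you do via the tensor-product Jackson kernel mollification. Your choice of the Jackson kernel over the Fej\'er kernel to avoid a logarithmic factor is a harmless refinement, and the bookkeeping of constants is sound.
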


We finally require the following elementary lemma regarding how local degree acts with respect to multiplication; this is once again deferred to \cref{sec:deferred}. (Recall we are defining local degree with respect to $\partial$ as opposed to $\Delta$, and the following cannot be altered to only require that the functions reduced $\imod{1}$ have local degree.)

\begin{lemma}\label{lem:product-degree}
If $S\subseteq G$ and $f_j\colon S\to\mb{C}$ are locally degree $d_j$ functions, then $g=\prod_{j=1}^kf_k$ is a locally degree $\sum_{j=1}^kd_j$ function.
\end{lemma}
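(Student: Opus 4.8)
The plan is to reduce to the case $k=2$ and then handle that case via an iterated discrete Leibniz rule. For the reduction I would induct on $k$: the case $k=1$ is trivial, and for the inductive step I set $F=\prod_{j=1}^{k-1}f_j$, which by the inductive hypothesis is locally degree $\sum_{j=1}^{k-1}d_j$ on $S$, and then apply the two-function case to $F$ and $f_k$ to deduce that $Ff_k$ is locally degree $\big(\sum_{j=1}^{k-1}d_j\big)+d_k=\sum_{j=1}^k d_j$.

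For the two-function case, let $f,g\colon S\to\mb{C}$ be locally degree $d$ and $e$ respectively, write $m=d+e+1$, and fix $x,h_1,\ldots,h_m$ with $x+\sum_{i}\epsilon_i h_i\in S$ for all $\epsilon\in\{0,1\}^m$; the goal is $\partial_{h_1,\ldots,h_m}(fg)(x)=0$. The starting point is the first-order discrete Leibniz rule $\partial_h(fg)(x)=f(x)\,\partial_h g(x)+\partial_h f(x)\,g(x+h)$, verified by expanding both sides. Iterating this identity $m$ times, I would obtain an expansion of $\partial_{h_1,\ldots,h_m}(fg)(x)$ into $2^m$ signed terms, each indexed by a subset $A\subseteq[m]$ and of the form $\pm\big(\partial_{(h_i)_{i\in A}}f\big)\!\big(x+\textstyle\sum_{i\in T_1}h_i\big)\cdot\big(\partial_{(h_i)_{i\notin A}}g\big)\!\big(x+\sum_{i\in T_2}h_i\big)$ with $T_1\subseteq[m]\setminus A$ and $T_2\subseteq A$. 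The precise signs and the precise shift sets $T_1,T_2$ are irrelevant; all that matters is that the $f$-factor is differentiated along exactly the indices in $A$ and shifted only by indices outside $A$, and symmetrically for the $g$-factor. This structure is manifestly preserved each time the first-order rule is applied (a derivative index moves onto one factor and simultaneously becomes a shift of the other factor), so it follows by induction on $m$.

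I would then conclude by pigeonhole together with the observation that the cube hypothesis descends to every term. Since $|A|+|[m]\setminus A|=m=d+e+1$, either $|A|\ge d+1$ or $|[m]\setminus A|\ge e+1$. In the first case the $f$-factor is $\partial_{(h_i)_{i\in A}}f$ evaluated at the base point $x+\sum_{i\in T_1}h_i$ with $T_1\cap A=\emptyset$; for every $A'\subseteq A$ the point $x+\sum_{i\in T_1}h_i+\sum_{i\in A'}h_i=x+\sum_{i\in T_1\cup A'}h_i$ is a vertex of the original combinatorial cube, since $T_1\cup A'$ is a subset of $[m]$, and hence lies in $S$. Thus the local-degree-$d$ property of $f$ forces this derivative of order $|A|\ge d+1$ to vanish. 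In the second case the analogous argument applies to the $g$-factor using its local-degree-$e$ property. Either way the term is zero, so the entire sum vanishes, proving $fg$ is locally degree $d+e$.

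The only real work is the bookkeeping in the iterated Leibniz expansion: tracking which indices act as derivatives on which factor and which indices appear as shifts, so as to be certain that the shift of a given factor never reuses an index already differentiating that same factor. This is precisely what makes the cube condition survive to each term, and it is the step I would write out most carefully; everything else is pigeonhole plus the definition of local degree.
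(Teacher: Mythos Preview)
Your proposal is correct and matches the paper's approach: reduce to two functions, iterate the discrete Leibniz rule $\partial_h(fg)=(\partial_hf)(T_hg)+f(\partial_hg)$, and use pigeonhole on the index sets. One small correction: carrying out the iteration shows all signs are $+$ and the shifts are exact (the $f$-factor is always based at $x$ and the $g$-factor at $x+\sum_{i\in A}h_i$), so the expansion is cleaner than you anticipate, though your weaker structural claim still suffices for the argument.
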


We are ready to proceed.
\begin{proof}[Proof of \cref{prop:3-step-conversion}]
We break the proof into a series of steps. 

\noindent\textbf{Step 1: Polynomial sequence and group multiplication in coordinates.}
Note that as we are dealing with nilpotent groups of step at most $3$ we have
\[\log(e^Xe^Y)=X+Y+\frac{1}{2}[X,Y]+\frac{1}{12}([X,[X,Y]]-[Y,[X,Y]])\]
by the Baker--Campbell--Hausdorff formula. Let $d_j=\dim G_{1}-\dim G_{j+1}$ and let our Mal'cev basis be $(X_k)_{k\in[d]}$ such that $(X_k)_{k>d_j}$ spans $\log G_{j+1}$ for $j\in\{0,1,2\}$. Note that by definition $\Gamma$ is the set of elements of the form $\prod_{i=1}^{d}\exp(t_iX_i)$ for $t_i\in\mb{Z}$. Recall that we have
\[[X_i,X_j]=\sum_{k=1}^dc_{ijk}X_k\]
for $c_{ijk}$ integers divisible by $12$ and of absolute value at most $M$ due to the complexity assumption. As $[\log G_i, \log G_j]\subseteq \log G_{i+j}$, we have that $c_{ijk}=0$ for $k\le d_{a+b+1}$ if $i>d_a$ and $j>d_b$.

As $g(n)$ is a polynomial sequence with $g(0) = \on{id}_G$, by Taylor expansion we have 
\[g(n) = g_1^{n}g_2^{\binom{n}{2}}g_3^{\binom{n}{3}}\]
with $g_i\in G_i$. Therefore
\begin{align*}
\log(g(n)) &= \log\bigg(\exp(n\log(g_1))\exp\bigg(\binom{n}{2}\log(g_2)\bigg)\exp\bigg(\binom{n}{3}\log(g_3)\bigg)\bigg)\\
&=n\log(g_1) + \binom{n}{2}\log(g_2) + \frac{n}{2}\binom{n}{2}[\log(g_1),\log(g_2)] + \binom{n}{3}\log(g_3)
\end{align*}
with $[\log(g_1),\log(g_2)]\in\log(G_3)$. Therefore we have 
\begin{equation}\label{eq:nilsequence-first-kind}
g(n)=\exp\bigg(\sum_{j=1}^{d}p_j(n)X_j\bigg)
\end{equation}
with $p_j(0) = 0$ and $p_j$ being at most degree $1$ if $1\le j\le d_1$, at most degree $2$ if $d_1+1\le j\le d_2$, and at most degree $3$ if $d_2+1\le j\le d_3=d$. We next realize multiplication of two elements corresponding to Mal'cev coordinates of the first kind as
\begin{align*}
&(x_1,\ldots,x_d)\ast(y_1,\ldots,y_d)\\
&=(x_1+y_1,\ldots,x_{d_1}+y_{d_1},x_{d_1+1}+y_{d_1+1}+\phi_{d_1+1}(x_{\le d_1},y_{\le d_1}),\ldots,x_{d_2}+y_{d_2}+\phi_{d_2}(x_{\le d_1},y_{\le d_1}),\\
&x_{d_2+1}+y_{d_2+1}+\phi_{d_2+1}(x_{\le d_2},y_{\le d_2})+\varphi_{d_2+1}(x_{\le d_1},y_{\le d_1}),\ldots,x_d+y_d+\phi_d(x_{\le d_2},y_{\le d_2})+\varphi_d(x_{\le d_1},y_{\le d_1}))
\end{align*}
where $\phi_j$ are bilinear forms with \emph{integral} coefficients and $\varphi_j$ are cubic forms with \emph{integral} coefficients. This is an immediate consequence of the Baker--Campbell--Hausdorff formula and using the assumption that the Lie bracket structure constants are all divisible by $12$. Furthermore these coefficients are of height at most $O(Md)^{O(1)}$ and since $[G_2,G_2]\subseteq G_4=\on{Id}_G$, the bilinear form $\phi_j$ for $j\in[d_1+1,d_2]$ has all coordinates $0$ on the box $[d_1+1,d_2]^2$.

\noindent\textbf{Step 2: Explicit representation of nilsequence with coordinates.}
We can represent the coordinates of $g(n)\Gamma$ in a fundamental domain with respect to Mal'cev coordinates of the first kind via iterated floor and fractional parts. These coordinates are not smooth at the boundary and thus we decompose $F$ via a partition of unity (and using different fundamental domains for each part). This will allow us to manipulate such coordinate functions without needing to worry very precisely about the minor discontinuities.

Let $L = O(M)^{O(d^{O(1)})}$. We write
\[F=\sum_{i\in I}F_i\]
with $F_i=F\tau_i$ where $\tau_i$ is as in \cref{lem:nilmanifold-partition-of-unity} applied with parameter $\eps=10^{-3}$. This will allow us to represent $F\tau_i$ on the fundamental domain (with respect to Mal'cev coordinates of the first kind) $\prod_{j=1}^d[\beta_j^{(i)}-1/2,\beta_j^{(i)}+1/2)$, where $\beta^{(i)}\in [-L,L]^{d}$ and
\[\on{supp}(F\tau_i)\subseteq\on{supp}(\tau_i)\subseteq\prod_{j=1}^d[\beta_j^{(i)}-10^{-3},\beta_j^{(i)}+10^{-3}).\]

We now define nonstandard (shifted) floor and fractional part functions for each $i\in I$ and $j\in[d]$ so that 
\begin{align*}
\{t\}_{i,j} &\equiv t \imod 1\\
t &= \{t\}_{i,j} + \lfloor t\rfloor_{i,j}\\
\{t\}_{i,j}&\in [\beta_j^{(i)}-1/2,\beta_j^{(i)}+1/2).
\end{align*}

For nearly the entire remainder of the proof, we will focus on massaging the representation of $F_i(x)$ into a more convenient form. Given $x\in G$ such that $\psi_{\exp}(x)= (x_1,\ldots,x_d)$, consider $\gamma$ with Mal'cev coordinates of the first kind:
\begin{align}
&(-\lfloor x_j\rfloor_{i,j})_{j\in[d_1]},\quad(-\lfloor x_j+\phi_j(x_{\le d_1},-\lfloor x_{\le d_1}\rfloor_i)\rfloor_{i,j})_{d_1+1\le j\le d_2},\notag\\
&(-\lfloor x_j+\varphi_j(x_{\le d_1},-\lfloor x_{\le d_1}\rfloor_i)+\phi_j(x_{\le d_2},x_{\le d_2}^\ast)\rfloor_{i,j})_{d_2+1\le j\le d_3},\label{eq:fundamental-shift}
\end{align}
where $x_{\le d_2}^\ast$ has coordinates equal to those on the first line of \cref{eq:fundamental-shift} (so it implicitly depends on $i$) and $\lfloor x_{\le d_1}\rfloor_i = (\lfloor x_1\rfloor_{i,1},\ldots, \lfloor x_1\rfloor_{i,d_1})$. Furthermore let $\{x_{\le d_1}\}_i = x - \lfloor x_{\le d_1}\rfloor_i$. Note that $\gamma\in \Gamma$ by \cref{lem:structure-constant}.

Thus $x\gamma$ has coordinates 
\begin{align}
&(\{x_j\}_{i,j})_{j\in[d_1]},\quad(\{x_j+\phi_j(x_{\le d_1},-\lfloor x_{\le d_1}\rfloor_i)\}_{i,j})_{d_1+1\le j\le d_2},\notag\\
&(\{x_j+\varphi_j(x_{\le d_1},-\lfloor x_{\le d_1}\rfloor_i)+\phi_j(x_{\le d_2},x_{\le d_2}^\ast)\}_{i,j})_{d_2+1\le j\le d}\label{eq:fundamental-coordinates-1}
\end{align}
which is in the specified fundamental domain $\prod_{j=1}^d[\beta_j^{(i)}-1/2,\beta_j^{(i)}+1/2)$ for $F_i$. Recall also that $\phi_j$ for $j\in[d_2+1,d]$ has certain $0$ coefficients. 

Let $x^{\ast}_{[d_1+1,d_2]}$ denote the vector with $d_2$ coordinates, the first $d_1$ of which are zero and the last $d_2-d_1$ of which are $(-\lfloor x_j+\phi_j(x_{\le d_1},-\lfloor x_{\le d_1}\rfloor_i)\rfloor_{i,j})_{d_1+1\le j\le d_2}$. Let $y_j = x_j+\phi_j(x_{\le d_1},-\lfloor x_{\le d_1}\rfloor_i)$, $y^{\ast}_{[d_1+1,d_2]}$ analogously be a vector of these coordinates and $d_1$ many $0$s, and let $\{y_{[d_1+1,d_2]}\}_i^\ast = (0,\ldots,0,\{y_{d_1+1}\}_{i,d_1+1},\ldots,\{y_{d_2}\}_{i,d_2})$.

Note that we have the identity
\[-y\lfloor z\rfloor_2=\lfloor y\rfloor_1 z-yz-\lfloor y\rfloor_1\lfloor z\rfloor_2+\{y\}_1\{z\}_2,\]
where the subscripts $1$ and $2$ indicate potentially different shift types. Therefore 
\begin{align*}
\phi_j(x_{\le d_2},x_{\le d_2}^\ast) &= \phi_j(x_{\le d_2},-\lfloor x_{\le d_1}\rfloor_i) + \phi_j(x_{\le d_2},x^{\ast}_{[d_1+1,d_2]})\\
&= \phi_j(x_{\le d_2},-\lfloor x_{\le d_1}\rfloor_i) + \phi_j(x_{\le d_1},x^{\ast}_{[d_1+1,d_2]})\\
&= \phi_j(x_{\le d_2},-\lfloor x_{\le d_1}\rfloor_i) + \phi_j(\lfloor x_{\le d_1}\rfloor_i,y^{\ast}_{[d_1+1,d_2]})-\phi_j(x_{\le d_1},y^{\ast}_{[d_1+1,d_2]}) \\
&\qquad\qquad+ \phi_j(\lfloor x_{\le d_1}\rfloor_i, x^{\ast}_{[d_1+1,d_2]})+ \phi_j(\{x_{\le d_1}\}_i,\{y_{[d_1+1,d_2]}\}_i^\ast).
\end{align*}
The equality $\phi_j(x_{\le d_2},x^{\ast}_{[d_1+1,d_2]}) = \phi_j(x_{\le d_1},x^{\ast}_{[d_1+1,d_2]})$ follows as $\phi_j$ has no nonzero coordinates on the box $[d_1+1,d_2]^2$.

This implies that
\begin{align*}
&(\{x_j+\varphi_j(x_{\le d_1},-\lfloor x_{\le d_1}\rfloor_i)+\phi_j(x_{\le d_2},x_{\le d_2}^\ast)\}_{i,j})_{d_2+1\le j\le d}\\
&= (\{x_j+\varphi_j(x_{\le d_1},-\lfloor x_{\le d_1}\rfloor_i)+\phi_j(x_{\le d_2},-\lfloor x_{\le d_1}\rfloor_i) + \phi_j(\lfloor x_{\le d_1}\rfloor_i,y^{\ast}_{[d_1+1,d_2]})\\
&\qquad-\phi_j(x_{\le d_1},y^{\ast}_{[d_1+1,d_2]})+ \phi_j(\{x_{\le d_1}\}_i,\{y_{[d_1+1,d_2]}\}_i^\ast)\}_{i,j})_{d_2+1\le j\le d};
\end{align*}
we are able to drop $\phi_j(\lfloor x_{\le d_1}\rfloor_i, x^{\ast}_{[d_1+1,d_2]})$ as $\phi_j$ has integral coefficients and we are taking fractional parts. Using the above general identity and this expression, we thus have coordinates of the form 
\begin{align}
&(\{x_j\}_{i,j})_{j\in[d_1]},\quad(\{x_j-\phi_j(x_{\le d_1},\lfloor x_{\le d_1}\rfloor_i)\}_{i,j})_{d_1+1\le j\le d_2},\notag\\
&(\{x_j+\varphi_j^\ast(x_{\le d_1},\lfloor x_{\le d_1}\rfloor_i)+\phi_j^\ast(x_{\le d_2},\lfloor x_{\le d_1}\rfloor_i)+\sigma_j(x_{\le d_2},x_{\le d_1})+\phi_j(\{x_{\le d_1}\}_i,\{y_{[d_1+1,d_2]}\}^{\ast})\}_{i,j})_{d_2+1\le j\le d_3}.\label{eq:fundamental-coordinates-2}
\end{align}
Here $\varphi_j^{\ast}$ are degree at most $3$ polynomials and $\phi_j^{\ast},\sigma_j$ are bilinear forms. Additionally, all coefficients are integral with heights bounded by $(O(Md))^{O(1)}$.

Let us briefly take stock of what has been accomplished in the last manipulation. Note that there are no longer any ``iterated'' floor expressions and the only terms being ``floored'' are $x_{\le d_1}$. The Bohr sets we will ultimately take therefore are determined by these coordinates only. 

We now ``lift'' $F_i$ from a function on $G/\Gamma$ to $\wt{F}_i$ on $(\mb{R}/\mb{Z})^d$. This can be done via identifying $(\mb{R}/\mb{Z})^d$ with the fundamental domain $\prod_{j=1}^d[\beta_j^{(i)}-1/2,\beta_j^{(i)}+1/2)$ with respect to $\psi_{\exp}$. Now, $\wt{F}_i$ is seen to be $O(LM/\eps)^{O(d^{O(1)})}$-Lipschitz on $(\mb{R}/\mb{Z})^d$ by \cref{lem:first-to-torus}. (We are also using that the support of $\wt{F}_i$ is close to the center of the torus, so the torus metric and $\ell^\infty$ on $\mb{R}^d$ are the same where it matters.)

Therefore it is sufficient to consider $\wt{F}_i$ with coordinates given by 
\begin{align}
&(x_j)_{j\in[d_1]},\quad(x_j-\phi_j(x_{\le d_1},\lfloor x_{\le d_1}\rfloor_i))_{d_1+1\le j\le d_2},\notag\\
&(x_j+\varphi_j^\ast(x_{\le d_1},\lfloor x_{\le d_1}\rfloor_i)+\phi_j^\ast(x_{\le d_2},\lfloor x_{\le d_1}\rfloor_i)+\sigma_j(x_{\le d_2},x_{\le d_1})+\phi_j(\{x_{\le d_1}\}_i,\{y_{[d_1+1,d_2]}\}_i^\ast))_{d_2+1\le j\le d_3}\label{eq:fundamental-coordinates-3}
\end{align}
which live on the torus.

We now ``smooth'' $\{x_{\le d_1}\}_i$ and $\{y_{[d_1+1,d_2]}\}_i^\ast$. We replace $\{\cdot\}_{i,j}$ with a $1$-periodic function $\{\cdot\}_{i,j,\mr{sm}}$ which agrees with $\{x\}_{i,j}$ if $\{x\}_{i,j}\in [\beta_j^{(i)}-1/4,\beta_j^{(i)}+1/4)$ and is $O(1)$-Lipschitz when viewed as a function on $\mb{R}/\mb{Z}$. Given this we write $\{x_{\le d_1}\}_{\mr{sm}}$ and $\{y_{[d_1+1,d_2]}\}_{\mr{sm}}^\ast$ for the associated vectors where smooth fractional parts have been used. 

We claim that it suffices to consider $\wt{F}_i$ with coordinates given by 
\begin{align}
&(x_j)_{j\in[d_1]},\quad(x_j-\phi_j(x_{\le d_1},\lfloor x_{\le d_1}\rfloor_i))_{d_1+1\le j\le d_2},\notag\\
&(x_j+\varphi_j^\ast(x_{\le d_1},\lfloor x_{\le d_1}\rfloor_i)+\phi_j^\ast(x_{\le d_2},\lfloor x_{\le d_1}\rfloor_i)+\sigma_j(x_{\le d_2},x_{\le d_1})+\phi_j(\{x_{\le d_1}\}_{\mr{sm}},\{y_{[d_1+1,d_2]}\}_{\mr{sm}}^\ast))_{d_2+1\le j\le d_3}.\label{eq:fundamental-coordinates-4}
\end{align}
Note that if $\{x_{\le d_1}\}_{\mr{sm}}\neq\{x_{\le d_1}\}_i$ or $\{y_{[d_1+1,d_2]}\}_{\mr{sm}}\neq\{y_{[d_1+1,d_2]}\}_i^\ast$ we immediately see that one of the coordinates in the first two lines has been forced outside the support of $\wt{F}_i$ and thus the value is already by construction zero in both coordinates. Otherwise the coordinates in \cref{eq:fundamental-coordinates-3,eq:fundamental-coordinates-4} match and the representation has been unchanged. 

\noindent\textbf{Step 3: Fourier expansion of nilsequence.}
Now, $\wt{F}_i\colon(\mb{R}/\mb{Z})^d\to\mb{C}$ is a Lipschitz function. Therefore by \cref{lem:torus-expand} with parameter $\tau$,
\[\sup_{z\in (\mb{R}/\mb{Z})^d}\bigg|\wt{F}_i(z) - \sum_{|\xi|\le O(M\tau^{-1})^{O(d^{O(1)})}}c_{\xi} e(\xi\cdot z)\bigg|\le \tau\]
with $|c_{\xi}|\le O(M\tau^{-1})^{O(d^{O(1)})}$.

Using this Fourier representation, we have 
\begin{align*}
&\sup_{x\in G}\bigg|F_i(x\Gamma) - \sum_{|\xi|\le O(M\tau^{-1})^{O(d^{O(1)})}}c_{\xi} e\bigg(\sum_{j=1}^{d}T_{\xi,j}x_j + \wt{\varphi}_{\xi}(x_{\le d_1},\lfloor x_{\le d_1}\rfloor_i)+ \wt{\phi}_{\xi}(x_{\le d_2},\lfloor x_{\le d_1}\rfloor_i) \\
&\qquad\qquad\qquad\qquad\qquad\qquad\qquad\qquad\qquad+\wt{\sigma}_\xi(x_{\le d_1},x_{\le d_2})+ \wt{\phi}'_{\xi}(\{x_{\le d_1}\}_{\mr{sm}},\{y_{[d_1+1,d_2]}\}_{\mr{sm}}^\ast) \bigg) \bigg|\le \tau
\end{align*}
such that $\wt{\varphi}_\xi$ is at most a degree $3$ polynomial, and $\wt{\phi}_\xi,\wt{\sigma}_\xi,\wt{\phi}'_\xi$ are bilinear forms. Furthermore all coefficients involved are integers bounded by $(M\tau^{-1})^{O(d^{O(1)})}$. 

The final smoothing we perform before we specialize to the polynomial sequence under consideration is to remove the $\{\cdot\}_{\mr{sm}}\{\cdot\}_{\mr{sm}}$ terms. Note that the function
\[(x,y)\to e(T\{x\}_{i,j,\mr{sm}}\{y\}_{i,j,\mr{sm}})\]
is $O(T L^{O(1)})$-Lipschitz. We apply \cref{lem:torus-expand} with $\eps = (M \tau^{-1})^{-O(d^{O(1)})}$ sufficiently small and replace each of the possible $d_1\cdot (d_2-d_1)$ possible combinations for the ``smoothed parts'' simultaneously in each term. We find that
\begin{equation}\label{eq:big-fourier-expansion}
\sup_{x\in G}\bigg|F_i(x\Gamma) - \sum_{k\in I}c_k e\bigg(\sum_{j=1}^{d}T_{k,j}x_j + \wt{\varphi}_k(x_{\le d_1},\lfloor x_{\le d_1}\rfloor_i) + \wt{\phi}_k(x_{\le d_2},\lfloor x_{\le d_1}\rfloor_i)+\wt{\sigma}_k(x_{\le d_2},x_{\le d_1})\bigg) \bigg|\le 2\tau
\end{equation}
with $|I|\le (M \tau^{-1})^{O(d^{O(1)})}$, $|c_k|\le (M\tau^{-1})^{O(d^{O(1)})}$, $\wt{\varphi}_k$ is an at most degree $3$ polynomial, $\wt{\phi}_k$ and $\wt{\sigma}_k$ are bilinear forms, and all coefficients of these forms bounded by $(M \tau^{-1})^{O(d^{O(1)})}$. Explicitly, we used \cref{lem:torus-expand} on $(\mb{R}/\mb{Z})^2$ with parameter $\eps$ many times and multiplied the representations together.

We now specialize to the case of interest. Note that our primary concern is with the case where $x = g(n)$ and therefore $x_j = p_j(n)$ where $\deg p_j\le i$ for $j\le d_i$ if $i\in\{1,2,3\}$. Using the representation \cref{eq:big-fourier-expansion} and collecting terms we have
\begin{align*}
&\sup_{n\in \mb{Z}}\bigg|F_i(g(n)\Gamma) - \sum_{k\in I}c_k e(H_k(n)) \bigg|\le 2\tau
\end{align*}
where $H_k\colon\mb{Z}\to\mb{R}$ is a sum of terms of the form 
\begin{itemize}
    \item $\alpha n^3 + \beta n^2 + \gamma n$
    \item $(\alpha n^2 + \beta n + \gamma) \lfloor p_j(n)\rfloor_{i,j}$ for $1\le j\le d_1$
    \item $(\alpha n + \beta)\lfloor p_j(n)\rfloor_{i,j}\lfloor p_k(n)\rfloor_{i,k}$ for $1\le j\le k\le d_1$
    \item $\gamma\lfloor p_j(n)\rfloor_{i,j}\lfloor p_k(n)\rfloor_{i,k}\lfloor p_\ell(n)\rfloor_{i,\ell}$ for $1\le j\le k\le \ell \le d_1$;
\end{itemize}
note that one can always collect terms so that there are at most $d^{O(1)}$ terms in each expression. 

\noindent\textbf{Step 4: $N$-periodicity and introducing Bohr sets.} Let $\rho=1/100$, which will be the radius of our Bohr sets. Now, note that the expressions $e(H_k(n))$ are not ``obviously'' $N$-periodic. We will artificially make them so via a partition of unity argument. There exist smooth $\chi_1,\ldots,\chi_{10^{3}}$ such that $\chi_j\colon(\mb{R}/\mb{Z})\to \mb{R}$ are nonnegative with $\on{supp}(\chi_{j})\subseteq[j/10^{3}, (j+2)/10^{3}) \imod 1$ and $\sum_{j=1}^{10^{3}}\chi_j = 1$. We have 
\begin{align*}
&F_i(g(n)\Gamma) = \sum_{j\in [10^{3}]}\chi_j(n/N)F_i(g(n)\Gamma),\\
\sup_{\substack{n\in \mb{Z}\\ j\in [10^{3}]}}\bigg|&F_i(g(n)\Gamma)\chi_j(n/N) - \chi_j(n/N)\sum_{k\in I}c_k e(H_k(n)) \bigg|\le O(\tau).
\end{align*}

Now fix $j\in [10^{3}]$; such a term only contributes when $n/N \in [j/10^{3}, (j+2)/10^{3}) \mod 1$. Next note that each $p_\ell(n) = \alpha_\ell n$ for $\ell\le d_1$ and because the initial nilsequence $g(n)\Gamma$ is $N$-periodic, by \cite[Lemma~A.12]{Len23} we have that $\alpha_\ell \in (1/N)\mb{Z}$. 

Note that in order for $\chi_j(n/N)F_i(g(n)\Gamma)$ to be nonzero, we need that 
\begin{itemize}
    \item  $n/N \equiv [j/10^{3}, (j+2)/10^{3}) \mod 1$
    \item $\{p_\ell(n)\}_{i,\ell} \in [\beta_\ell^{(i)}-10^{-3},\beta_\ell^{(i)}+10^{-3})$ for all $\ell\in[d_1]$.
\end{itemize}
These conditions are clearly $N$-periodic and in fact all such $n$ lie in a shifted Bohr set with frequency set $S = \{1/N\} \cup \{\alpha_\ell\}_{1\le \ell\le d_1}\subseteq(1/N)\mb{Z}$. If the corresponding shifted Bohr set is empty we know that $\chi_j(n/N)F_i(g(n)\Gamma) = 0$ and we approximate $\chi_j(n/N)F_i(g(n)\Gamma)$ by $0$. Else there is $y_j\in\mb{Z}/N\mb{Z}$ such that $y_j/N \equiv [j/10^{3}, (j+2)/10^{3}) \mod 1$ and $\{\alpha_\ell y_j\}_{i,\ell} \in [\beta_\ell^{(i)}-10^{-3},\beta_\ell^{(i)}+10^{-3})$ for $\ell\in[d_1]$. Letting $J_i$ be the set of $j$ for which this shifted Bohr set is nonempty, we have 
\[\sup_{\substack{n\in \mb{Z}\\ j\in J_i}}\bigg|F_i(g(n)\Gamma)\chi_j(n/N) - \mbm{1}_{n\in y_j + B(S, \rho)}\chi_j(n/N)\sum_{k\in I}c_k e(H_k(n)) \bigg|\le O(\tau).\]

We now apply Fourier expansion on $\chi_j$ using \cref{lem:torus-expand} on the torus $\mb{R}/\mb{Z}$ and appropriately chosen error parameter $\tau' = (M\tau^{-1})^{-O(d^{O(1)})}$. We find
\begin{equation}\label{eq:penult-correlation-reduction}
\sup_{\substack{n\in\mb{Z}\\j\in J_i}}\bigg|F_i(g(n)\Gamma)\chi_j(n/N) - \mbm{1}_{n\in y_j + B(S,\rho)}\chi_j(n/N)\sum_{k\in I, |\xi|\le (\tau')^{-O(1)}}c_kd_{\xi} e(H_k(n) + \xi n/N) \bigg|\le O(\tau)
\end{equation}
with $|d_{\xi}|\le (\tau')^{-O(1)}$.

Given $n\in y_j + B(S,\rho)$, we will now replace $H_k(n)$ by $H_{k,j}(n)$ so that the latter is $N$-periodic (and it will be locally cubic on $y_j+B(S,\rho)$). We fix an interval $I_j \in [-N,N)$ of integers of length at most $\lceil N/50\rceil$ such that for each $n\in y_j + B(S,\rho)$ there is a unique integer $n'\in I_j$ such that $n'\equiv n \imod N$. This exists since $1/N\in S$. We then define $H_{k,j}(n) \equiv H_k(n') \mod 1$ for such $n\in y_j + B(S,\rho)$ and $H_{k,j}(n) = 0$ otherwise; note that $H_{k,j}(n)$ is taking values in $\mb{R}/\mb{Z}$ which is sufficient as we are plugging these values into the exponential function.

It is easy to see that \cref{eq:penult-correlation-reduction} holds with $H_k(n)+\xi\cdot n/N$ replaced by $H_{k,j}(n)$ since everything in the inequality except for potentially $H_k(n)$ is $N$-periodic (recall $S\subseteq(1/N)\mb{Z}$), and there is a supremum over $n\in\mb{Z}$ on the outside. So, we have
\[\sup_{\substack{n\in \mb{Z}\\ j\in J_i}}\bigg|F_i(g(n)\Gamma)\chi_j(n/N) - \mbm{1}_{n\in y_j + B(S, \rho)}\sum_{k\in I, |\xi|\le (\tau')^{-O(1)}}c_k d_{\xi} e(H_{k,j}(n)+\xi n/N) \bigg|\le O(\tau)\]
where $H_{k,j}(n)$ is $N$-periodic by construction. Taking $\tau = \eta M^{-O(d^{O(1)})}$ and summing over $j$ and $i$ then finishes the proof modulo showing that $H_{k,j}(n)$ is a locally cubic function on $y_j + B(S,\rho)$.

\noindent\textbf{Step 5: Local cubicity on shifted Bohr sets.}
Fix $i$, $j\in J_i$, and $k\in I$. We wish to show local cubicity of $H_{k,j}$ on $y_j+B(S,\rho)$. Suppose that $n,h_1,h_2,h_3\in\mb{Z}/N\mb{Z}$ are such that $n + \sum_{\ell=1}^{3}\epsilon_\ell h_\ell\in y_j + B(S,\rho)$ for all $(\epsilon_1,\epsilon_2,\epsilon_3)\in \{0,1\}^3$. We consider the representatives of $n + \sum_{\ell=1}^{3}\epsilon_\ell h_\ell$ in $I_j$; we see that there exist $n',h_1',h_2',h_3'\in \mb{Z}$ such that $n' + \sum_{\ell=1}^{3}\epsilon_\ell h_\ell' \equiv n + \sum_{\ell=1}^{3}\epsilon_\ell h_\ell \imod N$ and $n' + \sum_{\ell=1}^{3}\epsilon_\ell h_\ell'\in I_j$ for all $\epsilon\in\{0,1\}^3$. Indeed, one can look at the representatives of each $n + \sum_{\ell=1}^{3}\epsilon_\ell h_\ell$ in $I_j$, and use the fact that if $t_1 + t_2 \equiv t_3 + t_4 \imod N$ with $t_i\in I_j$ then $t_1 + t_2 = t_3 + t_4$ (since $I_j$ has length at most $\lceil N/50\rceil$).

Therefore it suffices to prove that $H_k\colon\mb{Z}\to\mb{R}$ is locally cubic on $y_j+B(S,\rho)$. Recalling the classification of $H_k(n)$ as a sum of terms of various kinds, it suffices to verify this for each individual function type which is combined to give $H_k(n)$.

Note that pure polynomials are always the correct degree (on all of $\mb{Z}$). By \cref{lem:product-degree} it suffices to verify that $\lfloor \alpha_\ell n \rfloor_{i,\ell}$ is locally linear on $y_j+B(S,\rho)$ for $1\le\ell\le d_1$.

Given $n,n + h_1, n+h_2, n+h_1+h_2\in y_j + B(S,\rho)$ we have that
\begin{align*}
&\lfloor \alpha_\ell n \rfloor_{i,\ell} - \lfloor \alpha_\ell (n+h_1) \rfloor_{i,\ell} - \lfloor \alpha_\ell (n+h_2) \rfloor_{i,\ell} + \lfloor \alpha_\ell (n+h_1+h_2) \rfloor_{i,\ell} \in \mb{Z},\\
&\big|\lfloor \alpha_\ell n \rfloor_{i,\ell} - \lfloor \alpha_\ell (n+h_1) \rfloor_{i,\ell} - \lfloor \alpha_\ell (n+h_2) \rfloor_{i,\ell} + \lfloor \alpha_\ell (n+h_1+h_2) \rfloor_{i,\ell} \big|\\
&= \big|\{ \alpha_\ell n \}_{i,\ell} - \{ \alpha_\ell (n+h_1) \}_{i,\ell} - \{ \alpha_\ell (n+h_2) \}_{i,\ell} + \{ \alpha_\ell (n+h_1+h_2) \}_{i,\ell} \big|\le 1/2,
\end{align*}
the inequality using that $\rho=1/100$ and every $\alpha_\ell(n+\epsilon_1h_1+\epsilon_2h_2)$ must be close in $\mb{R}/\mb{Z}$ to $\alpha_\ell y_j$. Additionally, we recall $\{\alpha_\ell y_j\}\in[\beta_\ell^{(i)}-10^{-3},\beta_\ell^{(i)}+10^{-3})$ which implies that there is no discontinuity in $\{\cdot\}_{i,\ell}$ in the area of consideration.

Therefore
\[\lfloor \alpha_\ell n \rfloor_{i,\ell} - \lfloor \alpha_\ell (n+h_1) \rfloor_{i,\ell} - \lfloor \alpha_\ell (n+h_2) \rfloor_{i,\ell} + \lfloor \alpha_\ell (n+h_1+h_2) \rfloor_{i,\ell} = 0\]
for $n,n + h_1, n+h_2, n+h_1+h_2\in y_j + B(S,\rho)$ as desired. This (finally) completes the proof.
\end{proof}

\section{Proof of \texorpdfstring{\cref{thm:main}}{Theorem 1.1} given \texorpdfstring{\cref{prop:U4-inv-cubic}}{Proposition 1.4}}\label{sec:deduc}
In this section, we convert \cref{prop:U4-inv-cubic} into a density increment using the strategy of Heath-Brown \cite{Hea87} and Szemer\'{e}di \cite{Sze90}. Our treatment is closely modeled on that of Green and Tao \cite{GT09}; the crucial idea is that one may group together a large number of phases before passing to a subprogression.

Throughout this section we will consider functions $f\colon[N']\to[-1,1]$ corresponding to sets and shifted indicators. By Bertrand's postulate, we may find a prime $N$ such that $1024 N' \le N \le 2048 N'$. We may thus embed $[N']$ inside $\mb{Z}/(N\mb{Z})$ and lift $f$ to $\mb{Z}/N\mb{Z}$ (mapping inputs not congruent to an element of $[N']$ to $0$). We define the quintilinear operator
\[\Lambda(f_1,\ldots,f_5) = \mb{E}_{x,y\in\mb{Z}/N\mb{Z}} \prod_{k=1}^{5}f_k(x+(k-1)y)\text{ and }\Lambda(f) = \Lambda(f,f,f,f,f).\]

We now state the key claim for this section, which we will prove in \cref{sub:progression-partition}
\begin{proposition}\label{prop:trichotomy}
Fix a constant $c>0$. There exist $c'>0$ and $C>0$ such that the following holds. Consider a function $f\colon[N']\to [0,1]$ such that $\mb{E}_{n\in [N']}f(n) = \delta>0$ and a prime $N$ such that $1024 N'\le N\le 2048 N'$. Let $M(\delta) = \exp((\log(1/\delta))^C)$. At least one of the following possibilities always holds:
\begin{itemize}
    \item $N'\le\exp(M(\delta))$;
    \item $\big|\Lambda(f) - \Lambda(\delta \mbm{1}_{[N']})\big| \le c \delta^{5}$;
    \item There exists an arithmetic progression $P$ of length at least $N^{1/M(\delta)}$ such that
    \[\mb{E}_{n\in P}f(n)\ge (1+c')\delta.\]
\end{itemize}
\end{proposition}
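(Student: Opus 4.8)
\textbf{Proof strategy for \cref{prop:trichotomy}.}

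The plan is to run the standard $U^4$-control/density-increment dichotomy, with \cref{prop:U4-inv-cubic} supplying the structured object and the Heath-Brown--Szemer\'edi ``many phases'' maneuver as codified by Green and Tao \cite{GT09} supplying the density increment. First I would reduce to $U^4$-control: by the generalized von Neumann inequality (the $k=5$ case, which requires control by $U^4$), if $f = \delta\mbm{1}_{[N']} + g$ with $g = f - \delta\mbm{1}_{[N']}$ having mean zero, then $|\Lambda(f) - \Lambda(\delta\mbm{1}_{[N']})| \ll \snorm{g}_{U^4(\mb{Z}/N\mb{Z})}$ (expanding the telescoping difference into five terms, each with one slot replaced by $g$ and estimated by $\snorm{g}_{U^4}$, using $1$-boundedness in the remaining slots and absorbing the $N/N' = \Theta(1)$ loss from embedding). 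Hence if the second bullet fails, then $\snorm{g}_{U^4} \gg \delta^5$, and we may assume $\delta$ is bounded below so that $\snorm{g}_{U^4} \geq \delta' := (c/C'')\delta^5$ for an appropriate constant, and in particular $\snorm{g}_{U^4} \geq \exp(-(\log(1/\delta))^{O(1)})$.

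Next I apply \cref{prop:U4-inv-cubic} to $g$: there is a frequency set $S \subseteq (1/N)\mb{Z}$ with $|S| \ll (\log(1/\delta))^{O(1)}$, a shift $y$, and a locally cubic phase $\phi\colon y + B(S,1/100) \to \mb{R}/\mb{Z}$ with
\[
\big|\mb{E}_{t\in\mb{Z}/N\mb{Z}} \mbm{1}_{t\in y+B}\, g(t)\, e(-\phi(t))\big| \gg 1/\exp((\log(1/\delta))^{O(1)}).
\]
Now I would partition $y + B(S,1/100)$ (or rather a further-refined Bohr set) into genuine arithmetic progressions $P_i$ of length $\geq N^{1/M(\delta)}$ on each of which $\phi$, being locally cubic on the ambient Bohr set, is \emph{exactly} a cubic polynomial; this is the point where the ``quasi-quasi-logarithmic'' dimension of $S$ is crucial, since it lets us take each $P_i$ long (the standard Bohr-set-to-progression lemma gives progressions of length $\gg N^{1/|S|}(\text{radius})^{|S|}$, which with $|S| \leq (\log(1/\delta))^{O(1)}$ and the above radius is $\geq N^{1/M(\delta)}$ after adjusting $C$ in the definition of $M(\delta)$). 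The hypothesis $N' \leq \exp(M(\delta))$ in the first bullet is exactly what we need to exclude so that $N^{1/M(\delta)}$ is a meaningful (large) length. On each $P_i$, the correlation $|\mb{E}_{t\in P_i} g(t) e(-\phi(t))| = |\mb{E}_{t\in P_i} g(t) e(-(\text{cubic in } t))|$; pigeonholing over the progressions, on a set of progressions carrying a $\gg 1/\exp((\log 1/\delta)^{O(1)})$-fraction of the mass of $B$ we retain this correlation, so renormalizing and using $\mb{E}_B g$ can be taken small (else we already have a density increment on $B$ itself, which is a long progression), we get $\mb{E}_{t\in P_i} g(t) e(-(\text{cubic})) $ large on average.

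From here the density increment is routine: on a single progression $P_i$, $g$ has mean zero (or negligibly small — if not, $P_i$ itself is the desired progression), yet correlates with a cubic phase $e(-q(t))$; splitting $P_i$ further into shorter subprogressions on which $q$ is within $1/100$ of constant (again affordable since we only lose a further $N^{-1/\poly}$ in length, and there is only a single phase now so no dimension blowup), $g$ has substantial nonzero mean on one such subprogression. Translating back through $g = f - \delta\mbm{1}_{[N']}$ and accounting for the $[N'] \hookrightarrow \mb{Z}/N\mb{Z}$ embedding (the progression must be pulled back to $[N']$, costing only a constant-factor length loss since $N \asymp N'$), one of the subprogressions $P$ has $\mb{E}_{n\in P} f(n) \geq (1+c')\delta$ with $|P| \geq N^{1/M(\delta)}$, after enlarging the constant $C$ in $M(\delta)$ once more to swallow all accumulated $\poly((\log 1/\delta))$ losses in the exponent and all the progression-splitting overhead. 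The main obstacle is bookkeeping the length losses: each of the three progression-extraction steps (Bohr set $\to$ APs, pigeonhole, linearizing the cubic) costs a factor $N^{1/\poly((\log 1/\delta))}$ in length and a factor $\exp(-\poly(\log 1/\delta))$ in the retained correlation, and one must verify these compose to leave length $\geq N^{1/M(\delta)}$ and correlation $\geq 1/M(\delta)$-ish for a single choice of the constant $C$; this is precisely the ``slight simplification'' afforded by the small Bohr dimension that the outline advertises, and following \cite[\S4--5]{GT09} with these cruder inputs makes it go through without the regular-Bohr-set machinery.
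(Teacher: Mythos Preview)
Your approach has a genuine gap: a \emph{single} application of \cref{prop:U4-inv-cubic} cannot yield a density increment of size $c'\delta$ for a \emph{fixed} constant $c'>0$, which is exactly what the third bullet of \cref{prop:trichotomy} demands. From $\snorm{g}_{U^4}\gg\delta^5$ the inverse theorem gives a correlation of size only $\eta:=1/\exp((\log(1/\delta))^{O(1)})$ on the Bohr set. After all your partitioning and pigeonholing, the best you can conclude is that some subprogression $P$ has $|\mb{E}_P g|\gtrsim\eta$; even after the mean-zero trick to turn this into $\mb{E}_P g\gtrsim\eta$, you obtain $\mb{E}_P f\ge\delta+\Omega(\eta)$. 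But $\eta\ll\delta$, so this falls far short of $(1+c')\delta$. (If you tried to iterate the trichotomy with this weak increment instead, the number of density-increment steps would blow up from $O(\log(1/\delta))$ to roughly $\delta/\eta$, destroying the final bound.) This is the classical distinction between the single-character Roth increment $\delta\to\delta+c\delta^2$ and the Heath-Brown--Szemer\'edi increment $\delta\to(1+c)\delta$, and here the loss is even more severe because the inverse theorem is quasipolynomial rather than polynomial.

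The paper fixes this by inserting an energy-increment (Koopman--von Neumann) step: one iterates \cref{prop:U4-inv-cubic} to build a factor $\mc{B}$ generated by \emph{many} linear and locally cubic phases such that $\snorm{f-\Pi_{\mc{B}}f}_{U^4}\le c\delta^5/10$ (this is \cref{lem:iterated}). Then $|\Lambda(\Pi_{\mc{B}}f)-\Lambda(\delta\mbm{1}_{[N']})|\ge c\delta^5/2$, and a short $L^1$/$L^\infty$ argument (\cref{prop:cubic-bohr-increment}) shows $\snorm{\Pi_{\mc{B}}f-\delta\mbm{1}_{[N']}}_{L^1}\gg\delta$, hence some atom $\Omega^\ast$ of $\mc{B}$ carries density $\ge(1+c')\delta$. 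Only \emph{then} does one partition $\Omega^\ast$ into long progressions via Schmidt recurrence, and the $(1+c')\delta$ density survives by pigeonhole. Your Bohr-to-progression and cubic-linearization ideas are essentially correct and do appear at this final stage, but they must act on the atom produced by the energy increment, not on the output of a single inverse-theorem call.
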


With this, we can prove the main result.
\begin{proof}[Proof of \cref{thm:main} given \cref{prop:trichotomy}]
Suppose that $A\subseteq [N]$ has size $\delta N$ and has no $5$-term arithmetic progressions. We now perform the density increment strategy using $A_0 = A$, $N_0' = N$, and $\delta_0 = \delta$.

For each $N_i'$ choose a prime $N_i$ between $1024 N_i'\le N_i\le 2048 N_i'$. If $N_i'\le M(\delta_i)$, we immediately terminate. Else note that 
\[\big|\Lambda(\mbm{1}_{A_i}) - \Lambda(\delta_i\mbm{1}_{[N_i']})\big| \ge |\Lambda(\delta_i\mbm{1}_{[N_i']})\big| - |A_i|N_i^{-2}\gg \delta_i^{5}\]
as $A_i$ is free of $5$-term arithmetic progressions. Therefore the third case in the trichotomy must hold and there exists a long progression $P_i$ on which the density of $A_i$ increases by a factor of at least $(1+c')$. Let $A_{i+1}$ be $A_i\cap P_i$ rescaled so that $P_i$ starts at $0$ and has common difference $1$, let $N_{i+1}' = |P_i|$, and let $\delta_{i+1} = |A_i\cap P_i|/|P_i|\ge(1+c')\delta_i$. 

Since the density of a set cannot exceed $1$, we must terminate in at most $O(\log(1/\delta))$ iterations. If we terminate at $i$, we must have
\[N^{{(1/M(\delta))}^{O(\log(1/\delta))}}\le N_i'\le\exp(M(\delta_i))\le\exp(M(\delta)).\]
Rearranging this gives exactly that
\[\log N \le {M(\delta)}^{O(\log(1/\delta))}\]
or, as desired, 
\[\delta \ll \frac{1}{\exp((\log\log N)^{\Omega(1)})}.\qedhere\]
\end{proof}

\subsection{Inverse theorem relative to linear and cubic factors}\label{sub:factors}
We introduce a framework for studying functions satisfying a correlation as given by \cref{prop:U4-inv-cubic}, considering a $\sigma$-algebra (or \emph{factor}) which incorporates the information of the approximate value of our linear and cubic functions on $[N]$. Our treatment closely follows that of Green and Tao \cite{GT09} (and uses elements from Peluse and Prendiville \cite{PP22}).

\begin{definition}\label{def:factor}
We define a \emph{factor} $\mc{B}$ of $\mb{Z}/N\mb{Z}$ to be a partition $\mb{Z}/N\mb{Z} = \bigsqcup_{B\in \mc{B}}B$. We define $\mc{B}(x)$ for $x\in\mb{Z}/N\mb{Z}$ to be the part of $\mc{B}$ that contains $x$.

We say $\mc{B}'$ refines $\mc{B}$ if every part of $\mc{B}$ can be written as a disjoint union of parts of $\mc{B}'$. We define a \emph{join} of a sequence of factors to be the partition (discarding empty parts)
\[\mc{B}_1\vee\cdots\vee\mc{B}_d := \{B_1\cap\cdots\cap B_d\colon B_i\in \mc{B}_i\}.\]

Define a function $\phi\colon S\to\mb{R}/\mb{Z}$ for $S\subseteq\mb{Z}/N\mb{Z}$ to be \emph{irrational} if $\phi$ takes on irrational values. Define the factor $\mc{B}_{\phi,K}$ with respect to $\phi$ of resolution $K$ as the partition
\[((\mb{Z}/N\mb{Z})\setminus S)\sqcup \bigsqcup_{0\le j\le K-1}\{x\in\mb{Z}/N\mb{Z}\colon\snorm{\phi(x) - j/K}_{\mb{R}/\mb{Z}}\le 1/(2K)\}.\] 
(Since $\phi$ is irrational this is indeed a disjoint partition.) We further say that $\phi$ respects a factor $\mc{B}$ if $\mc{B}$ refines $S\sqcup ((\mb{Z}/N\mb{Z})\setminus S)$.

We define a factor of complexity $(d_1,d_2)$ and resolution $K$ via the data of irrational linear functions $\phi_1,\ldots,\phi_{d_1}$ (defined on $\mb{Z}/N\mb{Z}$) and irrational locally cubic functions $\phi_1',\ldots,\phi_{d_2}'$ (defined on subsets of $\mb{Z}/N\mb{Z}$) which respect $\bigvee_{1\le j\le d_1}\mc{B}_{\phi_j,K}$. The associated factor is $\bigvee_{1\le j\le d_1}\mc{B}_{\phi_j,K} \vee \bigvee_{1\le j\le d_1}\mc{B}_{\phi_j',K}$.

Finally, given a factor $\mc{B}$ we define $\Pi_{\mc{B}}f$ by 
\[\Pi_{\mc{B}}f(x) = \mb{E}_{y\in\mc{B}(x)}f(y).\]
\end{definition}
\begin{remark}
We ensure all functions $\phi$ we consider are irrational in order to avoid issues regarding the hitting exactly the boundary of the factor. Note that if a locally cubic function $\phi$ respects a partition $\mc{B}$ then we see that the support set is in the $\sigma$-algebra generated by $\mc{B}$. In particular, we can treat $\phi$ as either ``undefined'' on an atom $\mc{B}$ or as locally cubic on the associated atom.
\end{remark}

We now restate \cref{prop:U4-inv-cubic} in the language of factors.
\begin{lemma}\label{lem:factor-restatement}
There exists $C>0$ such that the following holds. Fix $\eta>0$ and let $f$ be a function $f\colon\mb{Z}/N\mb{Z}\to[0,1]$ such that $\snorm{f}_{U^4(\mb{Z}/N\mb{Z})}\ge\eta$.

Let $M(\eta) = \exp((\log(1/\eta))^C)$ and fix an integer $K\ge \exp(M(\eta))$. Suppose that $N\ge 8K$. There exist $d\le M(\eta)$ and a factor $\mc{B}$ of complexity $(d,1)$ and resolution $K$ such that
\[\snorm{\Pi_{\mc{B}}f}_{L^{1}(\mb{Z}/N\mb{Z})} \ge \exp(-M(\eta)).\]
\end{lemma}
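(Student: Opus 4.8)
The plan is to unwind \cref{prop:U4-inv-cubic}, which gives a single locally cubic phase on a single shifted Bohr set, into the factor language. First I would apply \cref{prop:U4-inv-cubic} to $f$ (which is $1$-bounded and has $\snorm{f}_{U^4}\ge\eta$) to obtain a frequency set $S\subseteq(1/N)\mb{Z}$ with $|S|\ll(\log(1/\eta))^C$, a shift $y\in\mb{Z}/N\mb{Z}$, and a locally cubic $\phi\colon y+B(S,1/100)\to\mb{R}/\mb{Z}$ with
\[\big|\mb{E}_{t\in\mb{Z}/N\mb{Z}}\mbm{1}_{t\in y+B}f(t)e(-\phi(t))\big|\gg1/\exp((\log(1/\eta))^C).\]
The elements of $S$ are of the form $\xi/N$, so the conditions $\snorm{\xi t/N - a_\xi}_{\mb{R}/\mb{Z}}<1/100$ can each be encoded (up to a harmless perturbation to make the linear phases irrational, e.g. by translating by a tiny irrational and absorbing the error, or by noting the Bohr condition can be read off from the factors $\mc{B}_{\phi_j,K}$ at resolution $K$) by the atoms of the linear factors $\mc{B}_{\phi_j,K}$ where $\phi_j(t)=\xi_j t/N$ plus a generic irrational shift. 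Taking $d=|S|$ of these linear functions and the single cubic function $\phi$ (also perturbed to be irrational, which changes it by at most $O(1/K)$ on its support and hence costs only $O(d/K)\le\exp(-M(\eta))$ in the correlation once $K$ is large enough), we get a factor $\mc{B}$ of complexity $(d,1)$ and resolution $K$, with $d\le M(\eta)$.

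Next I would show $\snorm{\Pi_{\mc B}f}_{L^1}$ is large. The point is that $e(-\phi(t))$ and $\mbm{1}_{t\in y+B}$ are both essentially $\mc{B}$-measurable up to small error: on each atom of $\mc{B}$ the linear phases $\xi_j t/N$ are pinned to within $1/K$, so membership in $y+B(S,1/100)$ is determined by the atom except for a set of $t$ of density $O(d/K)$ near the Bohr-set boundary; likewise $\phi(t)$ is pinned to within $1/(2K)$ on each atom of $\mc{B}_{\phi,K}$, so $e(-\phi(t))$ is within $O(1/K)$ of a constant $c_{\mc{B}(t)}$ on each atom. Hence, writing $g(t)=\mbm{1}_{t\in y+B}e(-\phi(t))$, there is a $\mc{B}$-measurable $1$-bounded $g'$ with $\mb{E}_t|g(t)-g'(t)|\ll d/K\le\exp(-M(\eta))$ (shrinking the exponent constant $C$ if needed, this is absorbed). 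Then
\[\exp(-M(\eta))\ll\big|\mb{E}_t f(t)\ol{g'(t)}\big|+\exp(-M(\eta))\le\mb{E}_t\big|\mb{E}_{y\in\mc B(t)}f(y)\big|\cdot\sup|g'|+\exp(-M(\eta)),\]
using that $g'$ is constant on atoms to move the conditional expectation onto $f$; i.e. $\snorm{\Pi_{\mc B}f}_{L^1}\ge\exp(-M'(\eta))$ for a slightly larger constant in the exponent, which we rename $M(\eta)$.

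The routine bookkeeping is: (i) making all linear and cubic phases irrational without destroying the correlation — handled by a generic small shift and the hypothesis $K\ge\exp(M(\eta))$, $N\ge8K$ so that the boundary layers have negligible density; (ii) checking that $\phi$, now irrational, still respects $\bigvee_j\mc{B}_{\phi_j,K}$ — this holds because $\phi$ is only defined on (a subset determined by) $y+B(S,1/100)$, whose indicator is, up to the boundary layer, $\bigvee_j\mc{B}_{\phi_j,K}$-measurable, and one can simply redefine the domain $S$ of $\phi$ to be the exact union of those atoms of $\bigvee_j\mc{B}_{\phi_j,K}$ contained in $y+B(S,1/100)$, discarding the boundary atoms. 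The main obstacle — though it is more bookkeeping than genuine difficulty — is point (ii): ensuring the locally cubic function's domain is genuinely a union of atoms of the linear factor (so that ``respects'' holds on the nose), while simultaneously retaining enough of the Bohr set that the correlation survives; the slack $1/100$ versus resolution $K\ge\exp(M(\eta))$ makes the discarded boundary atoms negligible, so this goes through. With $K$ this large and $N\ge 8K$, each atom is nonempty and the equidistribution of $\xi_j t/N$ (since $\gcd(\xi_j,N)=1$, $N$ prime) makes all the above density estimates exact up to $O(d/K)$.
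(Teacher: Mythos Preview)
Your proposal is correct and follows essentially the same route as the paper's proof: apply \cref{prop:U4-inv-cubic}, encode the Bohr frequencies as irrational linear phases $\phi_j(t)=a_j t - a_j y+\alpha$, replace the shifted Bohr set $y+B(S,1/100)$ by the union $T_2$ of those atoms of $\bigvee_j\mc{B}_{\phi_j,K}$ contained in it (losing only a boundary layer of density $O(d/K)$), restrict the cubic phase to $T_2$ and perturb it by $\alpha$ to make it irrational, and then use that $\mbm{1}_{T_2}e(-\phi^\ast)$ is within $O(1/K)$ in $L^\infty$ of a $\mc{B}$-measurable function together with self-adjointness of $\Pi_{\mc{B}}$ to pass the projection onto $f$. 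The paper phrases the last step via $\mb{E}_t f(t)\Pi_{\mc{B}}[e(-\phi^\ast)]=\mb{E}_t\Pi_{\mc{B}}[f]\,e(-\phi^\ast)$ rather than via an auxiliary $g'$, but this is the same self-adjointness argument you describe.
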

\begin{proof}
By \cref{prop:U4-inv-cubic}, there exist $\rho=1/100$, $S\subseteq (1/N)\mb{Z}$ with $|S|\ll(\log(1/\eta))^{C}$ (say $|S|=d$), and $y\in \mb{Z}/N\mb{Z}$ such that $B=B(S,\rho)$ is a Bohr set and $\phi\colon y+B\to\mb{R}/\mb{Z}$ is a locally cubic phase function such that
\[|\mb{E}_{t\in\mb{Z}/N\mb{Z}}\mbm{1}_{t\in y + B}f(t)e(-\phi(t))|\ge \exp(-(\log(1/\eta))^C/2).\]

Let $C'$ be a sufficiently large constant and assume $K \ge \exp((\log(1/\eta))^{C'})$. Take $\phi_i\colon\mb{Z}/N\mb{Z}\to \mb{R}/\mb{Z}$ to be $\phi_i(x) = a_ix - a_iy + \alpha$ where $\alpha$ is an irrational which is smaller than $(2K)^{-1}$ and $a_i\in S$. 

Note that $y + B$ is exactly the set
\[T_1 = \Big\{x\in\mb{Z}/N\mb{Z}\colon \sup_{a_i\in S}\snorm{a_ix - a_i y}_{\mb{R}/\mb{Z}}\le\rho\Big\}.\]
Note that the set
\[T_2 = \Big\{x\in\mb{Z}/N\mb{Z}\colon\sup_{a_i\in S}\snorm{a_ix - a_iy + \alpha}_{\mb{R}/\mb{Z}}\le (2K)^{-1}\cdot (2\lfloor K \rho\rfloor-1)\Big\}\]
is measurable with respect to the factor $\bigvee_{1\le j\le d}\mc{B}_{\phi_j,K}$. Also, $T_2\subseteq T_1$ since $|\alpha|\le 1/(2K)$ and $(2K)^{-1}\cdot (2\lfloor K \rho\rfloor-1) + (2K)^{-1}\le\rho$. Furthermore we have
\[T_1\setminus T_2 \subseteq \Big\{x\in\mb{Z}/N\mb{Z}\colon\sup_{a_i\in S}\snorm{a_ix - a_i y}_{\mb{R}/\mb{Z}}\in[\rho-2/K,\rho)\Big\}\]
and thus we have
\[|T_2\setminus T_1|\le |S|\cdot(5/K)N= 5dN/K\]
as long as $8K\le N$. Here we have used that $S\subseteq\mb{Z}/N\mb{Z}$ where $N$ is prime.

So, since $C'$ is large with respect to $C$ and $f$ is $1$-bounded we have 
\[|\mb{E}_{t\in\mb{Z}/N\mb{Z}}\mbm{1}_{t\in T_2}f(t)e(-\phi(t))|\gg \exp(-(\log(1/\eta))^C/2).\]
The locally cubic function we will consider is $\phi^{\ast}\colon T_2\to \mb{R}/\mb{Z}$ given by $\phi^{\ast}(x) = \phi(x) + \alpha$. This is well-defined as $T_2\subseteq T_1 = y+B$. Let $\mc{B} = \bigvee_{1\le j\le d}\mc{B}_{\phi_j,K} \vee \mc{B}_{\phi^\ast,K}$ and note that 
\[\sup_{x\in \mb{Z}/N\mb{Z}}\big|e(-\phi(x)) - \Pi_{\mc{B}}[e(-\phi^{\ast}(x))]\big|\le 2\pi/K\]
since $z\mapsto e(z)$ is a $2\pi$-Lipschitz function on $\mb{R}/\mb{Z}$. Therefore 
\[|\mb{E}_{t\in\mb{Z}/N\mb{Z}}f(t)\Pi_{\mc{B}}[e(-\phi^{\ast}(x))]|\ge\exp(-(\log(1/\eta))^C).\]
$\Pi_{\mc{B}}$ is self-adjoint with respect to the standard inner product (see e.g.~\cite[Lemma~4.3(ii)]{PP22}) so
\begin{align*}
\mb{E}_{t\in\mb{Z}/N\mb{Z}}|\Pi_{\mc{B}}[f(t)]|&\ge 
|\mb{E}_{t\in\mb{Z}/N\mb{Z}}\Pi_{\mc{B}}[f(t)]e(-\phi^{\ast}(t))|=|\mb{E}_{t\in\mb{Z}/N\mb{Z}}f(t)\Pi_{\mc{B}}[e(-\phi^{\ast}(t))]|\\
&\ge\exp(-(\log(1/\eta))^C).
\end{align*}
This gives the desired result.
\end{proof}

We now prove an associated Koopman--von Neumann theorem; our proof is closely modeled after \cite[Theorem~5.6]{GT09}.
\begin{lemma}\label{lem:iterated}
There exists $C>0$ such that the following holds. Fix $\eta>0$, let $f$ be a function $f\colon\mb{Z}/N\mb{Z}\to [0,1]$, and let $\mc{B}^{\ast}$ denote an initial factor. 

Let $M(\eta) = \exp(\log(1/\eta)^C)$ and let $N\ge 10 M(\eta)$. There exist $d_1,d_2\le M(\eta)$ such that the following holds. There exists an integer $K\le M(\eta)$ and a factor of $\mc{B}'$ of complexity $(d_1,d_2)$ and resolution $K$ such that if $\mc{B} = \mc{B}^{\ast} \vee \mc{B}'$ then
\[\snorm{f - \Pi_{\mc{B}}f}_{U^4(\mb{Z}/N\mb{Z})} \le \eta.\]
\end{lemma}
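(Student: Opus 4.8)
The plan is to run the standard energy-increment (Koopman--von Neumann) iteration, where at each stage we either stop with the desired $U^4$ smallness or apply \cref{lem:factor-restatement} to the ``error'' $f - \Pi_{\mc{B}}f$ to enlarge the factor. Concretely, set $\mc{B}_0 := \mc{B}^{\ast}$ and iteratively define $\mc{B}_{m+1}$ as follows. If $\snorm{f - \Pi_{\mc{B}_m}f}_{U^4(\mb{Z}/N\mb{Z})} \le \eta$ we stop and output $\mc{B} = \mc{B}_m$. Otherwise, since $f$ and $\Pi_{\mc{B}_m}f$ are both $1$-bounded, the function $g_m := f - \Pi_{\mc{B}_m}f$ is $2$-bounded, so $\frac12 g_m$ satisfies $\snorm{\frac12 g_m}_{U^4} \ge \eta/2$ and we may apply \cref{lem:factor-restatement} (with parameter $\eta/2$ in place of $\eta$, and a resolution $K \ge \exp(M(\eta/2))$ to be fixed at the end) to obtain a factor $\mc{B}'_m$ of complexity $(d^{(m)}, 1)$ with $d^{(m)} \le M(\eta/2)$ and resolution $K$ such that $\snorm{\Pi_{\mc{B}'_m} g_m}_{L^1} \ge \exp(-M(\eta/2))$. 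We then set $\mc{B}_{m+1} := \mc{B}_m \vee \mc{B}'_m$.

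The key point is that this iteration must terminate quickly because of an energy (i.e.\ $L^2$) increment. Writing $\mathcal{E}(\mc{B}) := \snorm{\Pi_{\mc{B}}f}_{L^2(\mb{Z}/N\mb{Z})}^2$, the Pythagorean-type identity for conditional expectations gives, whenever $\mc{B}_{m+1}$ refines $\mc{B}_m$,
\[\mathcal{E}(\mc{B}_{m+1}) - \mathcal{E}(\mc{B}_m) = \snorm{\Pi_{\mc{B}_{m+1}}f - \Pi_{\mc{B}_m}f}_{L^2}^2 = \snorm{\Pi_{\mc{B}_{m+1}} g_m}_{L^2}^2 \ge \snorm{\Pi_{\mc{B}_{m+1}} g_m}_{L^1}^2,\]
using that $\Pi_{\mc{B}_{m+1}}$ is an orthogonal projection (self-adjoint and idempotent; cf.\ \cite[Lemma~4.3]{PP22}) and that $\Pi_{\mc{B}_{m+1}}\Pi_{\mc{B}_m} = \Pi_{\mc{B}_m}$. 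Since $\mc{B}_{m+1}$ refines $\mc{B}'_m$ and conditional expectation is an $L^1$-contraction towards coarser factors, $\snorm{\Pi_{\mc{B}_{m+1}} g_m}_{L^1} \ge \snorm{\Pi_{\mc{B}'_m}g_m}_{L^1} \ge \exp(-M(\eta/2))$ (one should double-check the direction of this monotonicity; alternatively, apply \cref{lem:factor-restatement}'s conclusion directly against the test function furnished by its proof, which is measurable in $\mc{B}'_m$ and hence in $\mc{B}_{m+1}$). Therefore each step increases $\mathcal{E}$ by at least $\exp(-2M(\eta/2))$. Since $0 \le \mathcal{E}(\mc{B}) \le 1$ always, the iteration runs for at most $m_* \le \exp(2M(\eta/2))$ steps.

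It remains to bookkeep the complexity and resolution of the final factor. After $m_* \le \exp(2M(\eta/2))$ steps we have accumulated the initial factor $\mc{B}^{\ast}$ together with at most $m_*$ factors $\mc{B}'_m$, each of complexity $(d^{(m)},1)$ with $d^{(m)} \le M(\eta/2)$ and common resolution $K$; concatenating their defining linear and locally-cubic functions, the join $\mc{B}' := \bigvee_{m < m_*} \mc{B}'_m$ has complexity $(d_1,d_2)$ with $d_1, d_2 \le m_* M(\eta/2) \le \exp(\log(1/\eta)^{O(1)}) =: M(\eta)$ (absorbing all polynomial-in-$\log(1/\eta)$ losses into the exponent), and resolution $K$, which we are free to choose of size $K \le M(\eta)$ as long as $K \ge \exp(M(\eta/2))$ — both constraints are compatible after enlarging the implicit constant. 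The hypothesis $N \ge 10 M(\eta)$ ensures $N \ge 8K$ so that \cref{lem:factor-restatement} applies at every step. This yields $\mc{B} = \mc{B}^{\ast} \vee \mc{B}'$ with $\snorm{f - \Pi_{\mc{B}}f}_{U^4} \le \eta$, as desired.

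The main obstacle is purely bookkeeping: one must verify that re-invoking \cref{lem:factor-restatement} against the $2$-bounded error $f - \Pi_{\mc{B}_m}f$ (rather than a $1$-bounded function) costs only harmless constant factors, and that all the $M(\cdot)$ bounds — number of iterations $\exp(2M(\eta/2))$, per-step dimension $M(\eta/2)$, and the resolution window — collapse into a single clean bound of the form $\exp(\log(1/\eta)^{O(1)})$ after renaming constants. There is also a minor subtlety in ensuring the newly produced locally cubic function ``respects'' the current factor $\mc{B}_m \vee \bigvee_{j<m}\mc{B}'_j$: this is automatic because its support set is a Bohr-type set cut out by the linear functions produced alongside it, and we simply include those linear functions in the factor as well, exactly as in \cref{def:factor}; intersecting with the already-present atoms only refines the partition and does not affect measurability of the cubic phase on each atom.
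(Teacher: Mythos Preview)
Your proposal is correct and follows essentially the same energy-increment argument as the paper's proof: iteratively apply \cref{lem:factor-restatement} to $f-\Pi_{\mc{B}_m}f$, join the resulting factor into $\mc{B}_m$, and use the Pythagorean identity to show the $L^2$ energy increases by at least $\exp(-(\log(1/\eta))^{O(1)})$ per step, forcing termination in $\exp((\log(1/\eta))^{O(1)})$ steps. Your hedge about the $L^1$ monotonicity is unnecessary: since $\mc{B}_{m+1}$ refines $\mc{B}'_m$, the tower property gives $\Pi_{\mc{B}'_m}g_m=\Pi_{\mc{B}'_m}\Pi_{\mc{B}_{m+1}}g_m$, and the $L^1$-contractivity of $\Pi_{\mc{B}'_m}$ then yields $\snorm{\Pi_{\mc{B}'_m}g_m}_{L^1}\le\snorm{\Pi_{\mc{B}_{m+1}}g_m}_{L^1}$ as you claimed (the paper routes this through $L^2$ instead, which is equivalent).
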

\begin{proof}
We create the factor $\mc{B}'$ as a join given by applying \cref{lem:factor-restatement} iteratively. Set $\mc{B}_0 = \mc{B}^{\ast}$. 
\begin{itemize}
    \item If $\snorm{f-\Pi_{\mc{B}_i} f}_{U^4(\mb{Z}/N\mb{Z})}\le \eta$, we terminate. Set $i^{\ast} = i$ and output $\mc{B}'=\bigvee_{0\le i'<i^\ast}\mc{B}_{i'}'$.
    \item If $\snorm{f-\Pi_{\mc{B}_i} f}_{U^4(\mb{Z}/N\mb{Z})}\ge \eta$, then set $K = \lceil \exp(M(\eta))\rceil$. By \cref{lem:factor-restatement} there exists $\mc{B}_i'$ such that 
    \[\snorm{\Pi_{\mc{B}_i'}(f-\Pi_{\mc{B}_i} f)}_{L^1(\mb{Z}/N\mb{Z})}\ge \exp(-(\log(1/\eta))^{C'}).\]
    Here $\mc{B}_i' = \bigvee_{1\le j\le d_i}\mc{B}_{\phi_j^{(i)},K} \vee \mc{B}_{\phi_i',K}$ where $\phi_j^{(i)}$ (for $j\le d_i\le\exp((\log(1/\eta))^{O(1)}))$) are irrational linear functions on $\mb{Z}/N\mb{Z}$ and $\phi_i'$ is a locally cubic function respecting the factor $\bigvee_{1\le j\le d}\mc{B}_{\phi_j,K}$. We set $\mc{B}_{i+1} = \mc{B}_i\vee \mc{B}_i'$.
\end{itemize}
Note that $\mc{B}=\mc{B}^\ast\vee\mc{B}'=\mc{B}_{i^\ast}$. Observe that
\begin{align*}
\snorm{\Pi_{\mc{B}_i'}(f-\Pi_{\mc{B}_i}f)}_{L^1(\mb{Z}/N\mb{Z})}&\le\snorm{\Pi_{\mc{B}_i'}(f-\Pi_{\mc{B}_i}f)}_{L^2(\mb{Z}/N\mb{Z})}=\snorm{\Pi_{\mc{B}_i'}\Pi_{\mc{B}_{i+1}}(f-\Pi_{\mc{B}_i}f)}_{L^2(\mb{Z}/N\mb{Z})}\\
&\le\snorm{\Pi_{\mc{B}_{i+1}}(f-\Pi_{\mc{B}_i}f)}_{L^2(\mb{Z}/N\mb{Z})}=\snorm{\Pi_{\mc{B}_{i+1}}f-\Pi_{\mc{B}_i}f}_{L^2(\mb{Z}/N\mb{Z})}\\
&=(\snorm{\Pi_{\mc{B}_{i+1}}f}_{L^2(\mb{Z}/N\mb{Z})}^2-\snorm{\Pi_{\mc{B}_i}f}_{L^2(\mb{Z}/N\mb{Z})}^2)^{1/2}.
\end{align*}
The final equality is the Pythagorean theorem with respect to projections (this follows from e.g.~\cite[Lemma~4.3(iv)]{PP22}). 

Thus $\snorm{\Pi_{\mc{B}_{i+1}f}}_{L^2(\mb{Z}/N\mb{Z})}^2-\snorm{\Pi_{\mc{B}_i}f}_{L^2(\mb{Z}/N\mb{Z})}^2\ge \exp(-(\log(1/\eta))^{O(1)})$ and hence the iteration lasts only $\exp((\log(1/\eta))^{O(1)})$ steps. Therefore the final $\mc{B}'$ is generated by at most $\exp((\log(1/\eta))^{O(1)})$ linear functions and a similar number of locally cubic functions. This completes the proof. 
\end{proof}

\subsection{Density increment onto cubic Bohr set}\label{sub:dense-Bohr}
We next need that $\Lambda$ is controlled by the $U^4$-norm and the $L^{1}$-norm. The proof is by now standard and hence is omitted (see \cite[Lemma~3.2]{GT09} and \cite[Theorem~3.2]{Gow01a}).
\begin{lemma}\label{lem:bounds}
Let $N$ be prime and $f_i\colon\mb{Z}/N\mb{Z}\to \mb{C}$. Then we have:
\begin{align*}
|\Lambda(f_1,f_2,f_3,f_4,f_5)|&\le \min_{1\le i\le 5}\snorm{f_i}_{L^1(\mb{Z}/N\mb{Z})}\prod_{j\neq i}\snorm{f_j}_{L^\infty(\mb{Z}/N\mb{Z})},\\
|\Lambda(f_1,f_2,f_3,f_4,f_5)|&\le \min_{1\le i \le 5}\snorm{f_i}_{U^4(\mb{Z}/N\mb{Z})}\prod_{j\neq i}\snorm{f_j}_{L^\infty(\mb{Z}/N\mb{Z})}.
\end{align*}
\end{lemma}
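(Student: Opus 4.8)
The two inequalities are the standard "generalized von Neumann" estimates adapted to the five-term operator $\Lambda$, so the plan is to reduce both to the $U^4$-control and then extract the $L^1$-bound as a degenerate case. For the first inequality, the plan is to bound $|\Lambda(f_1,\dots,f_5)|$ directly: fixing $i$ and writing out $\Lambda$ as an average over $x,y\in\mb{Z}/N\mb{Z}$, I would pull $|f_i(x+(i-1)y)|\le\snorm{f_i}_{L^\infty}$ out trivially for all but one index — actually, the cleaner route is to keep $f_i$ inside, substitute so that $x+(i-1)y$ becomes a free variable $u$, and observe that the remaining average over the complementary variable is at most $\prod_{j\ne i}\snorm{f_j}_{L^\infty}$ pointwise in $u$; averaging in $u$ then produces the factor $\snorm{f_i}_{L^1}$. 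Since the linear forms $x+(k-1)y$ for $k=1,\dots,5$ are pairwise "independent" as functions of $(x,y)$ over $\mb{Z}/N\mb{Z}$ with $N$ prime (any two of them determine $(x,y)$), this change of variables is legitimate.

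For the second (and main) inequality, the plan is the usual iterated Cauchy–Schwarz argument. Fix $i$; by symmetry of the argument it suffices to treat, say, $i=1$ (the general case is identical after an affine reparametrization of the progression, using that $N$ is prime so that $k-1$ is invertible when $k\ne 1$, and relabeling). Writing $\Lambda(f_1,\dots,f_5)=\mb{E}_{x,y}f_1(x)f_2(x+y)f_3(x+2y)f_4(x+3y)f_5(x+4y)$, one performs four successive applications of the Cauchy–Schwarz inequality, at the $j$-th step duplicating the $y$-variable (introducing a new increment $h_j$) and using $1$-boundedness to discard the factors not yet "differentiated", so that after four steps one is left with an expression that telescopes into $\mb{E}_{x,h_1,h_2,h_3,h_4}\Delta_{h_1,h_2,h_3,h_4}f_1(x)$ — up to a harmless linear change of variables in the $h_j$'s — which is exactly $\snorm{f_1}_{U^4}^{16}$. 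Taking sixteenth roots and tracking that each discarded factor contributes a power of its $L^\infty$-norm gives the claimed bound. This is a completely standard computation; the only points requiring the primality of $N$ are the invertibility of the relevant coefficients in the change of variables and the non-degeneracy of the system of linear forms (the "true complexity" of the five-term progression is $3$, hence $U^4$-control suffices, and in fact $U^3$ would do, but $U^4$ is all we need and is cleanest to state here).

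The main obstacle — such as it is — is purely bookkeeping: organizing the four Cauchy–Schwarz steps so that the increments $h_1,\dots,h_4$ are introduced cleanly and the final substitution genuinely recovers the multiplicative derivative $\Delta_{h_1,\dots,h_4}$ rather than some twisted variant. Since this is entirely routine and appears in essentially this form in \cite[Lemma~3.2]{GT09} and \cite[Theorem~3.2]{Gow01a}, I would simply cite those references and omit the details, as indicated in the statement.
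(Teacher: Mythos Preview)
Your proposal is correct and matches the paper's approach: the paper simply omits the proof as standard and cites \cite[Lemma~3.2]{GT09} and \cite[Theorem~3.2]{Gow01a}, exactly as you suggest. One small slip: your parenthetical remark that ``in fact $U^3$ would do'' is incorrect --- the five-term progression has (true and Cauchy--Schwarz) complexity $3$, so $U^4$ is genuinely required and $U^3$ does not suffice; but this aside does not affect the argument.
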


Given this we may now prove that there exists a density increment onto a piece of the partition given by \cref{lem:iterated}. The proof is identical to \cite[Lemma~5.8]{GT09}.
\begin{proposition}\label{prop:cubic-bohr-increment}
Fix $c>0$. There exist $C>0$ and $c'> 0$ such that the following holds. Given a function $f\colon[N']\to[0,1]$ such that $\mb{E}_{n\in[N']}f(n) = \delta>0$, extend it by $0$s to a function on $\mb{Z}/N\mb{Z}$ where $N$ is prime with $1024N'\le N\le 2048N'$. Suppose that
\[|\Lambda(f) - \Lambda(\delta \mbm{1}_{[N']})|\ge c\delta^{5}.\]

Let $\mc{B}^{\ast} = [N']\sqcup ((\mb{Z}/N\mb{Z})\setminus [N'])$ be an initial factor. Let $M(\delta) = \exp((\log(1/\delta))^C)$ and suppose that $N\ge M(\delta)$. Then there exist $d_1,d_2,K\le M(\delta)$, a factor $\mc{B}$ of complexity $(d_1,d_2)$ and resolution $K$, and an atom $\Omega^\ast$ of $\mc{B}\vee\mc{B}^{\ast}$ such that
\begin{itemize}
    \item $\mb{P}_{n\in \mb{Z}/N\mb{Z}}[n\in\Omega^\ast]\ge \exp(-\exp((\log(1/\delta))^C))$;
    \item $\mb{E}[f(n)|n\in\Omega^\ast]\ge (1+c')\delta$.
\end{itemize}
\end{proposition}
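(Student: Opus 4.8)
The plan is to follow the density increment of Green and Tao essentially verbatim.

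\emph{Setting up the structured model.} First I would apply \cref{lem:iterated} to the initial factor $\mc{B}^\ast = [N']\sqcup((\mb{Z}/N\mb{Z})\setminus[N'])$ with error parameter $\eta := c\delta^5/200$; taking the constant $C$ in $M(\delta)=\exp((\log(1/\delta))^C)$ large enough (in terms of the absolute constants from \cref{lem:iterated}) ensures that the hypothesis $N\ge M(\delta)$ supplies the lower bound on $N$ needed there and that the output dimensions and resolution $d_1,d_2,K\le M(\eta)$ are all $\le M(\delta)$. This produces a factor $\mc{B}$ of complexity $(d_1,d_2)$ and resolution $K$ such that, writing $g:=\Pi_{\mc{B}\vee\mc{B}^\ast}f$, we have $\snorm{f-g}_{U^4(\mb{Z}/N\mb{Z})}\le\eta$. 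Since $f$ and $g$ are both $1$-bounded, multilinearity of $\Lambda$ together with the $U^4$-estimate in \cref{lem:bounds} gives $|\Lambda(f)-\Lambda(g)|\le 5\eta$; combined with the hypothesis $|\Lambda(f)-\Lambda(\delta\mbm{1}_{[N']})|\ge c\delta^5$ this yields $|\Lambda(g)-\Lambda(\delta\mbm{1}_{[N']})|\ge c\delta^5-5\eta\ge\tfrac{39}{40}c\delta^5$.

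\emph{Discarding small atoms.} The function $g$ is nonnegative, $1$-bounded, constant on atoms of $\mc{B}\vee\mc{B}^\ast$, supported on $[N']$ (as $\mc{B}\vee\mc{B}^\ast$ refines $\mc{B}^\ast$ and $f$ vanishes off $[N']$), with $\mb{E} g=\delta N'/N$ and $g|_\Omega=\mb{E}[f\mid\Omega]$ on each atom $\Omega$. The total number of atoms of $\mc{B}\vee\mc{B}^\ast$ is at most $2(K+1)^{d_1+d_2}\le\exp(\exp((\log(1/\delta))^{O(1)}))$, so I would fix a threshold $\epsilon$, still satisfying $\epsilon\ge\exp(-\exp((\log(1/\delta))^C))$ for $C$ large, such that the union $E$ of all atoms of measure $<\epsilon$ has $\mb{P}[E]\le c\delta^5/10^6$. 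Write $g=g_\ell+g_s$ with $g_s:=g\mbm{1}_E$ and $g_\ell:=g-g_s$, so that $\snorm{g_s}_{L^1}\le c\delta^5/10^6$ and $0\le g_s\le 1$.

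\emph{The dichotomy.} Suppose, for contradiction, that every atom $\Omega$ of $\mc{B}\vee\mc{B}^\ast$ with $\mb{P}[\Omega]\ge\epsilon$ has $\mb{E}[f\mid\Omega]<(1+c')\delta$, where $c':=c/10^4$. Since $g_\ell$ equals $\mb{E}[f\mid\Omega]$ on a large atom $\Omega$ and vanishes on small atoms and off $[N']$, this forces $0\le g_\ell\le(1+c')\delta\mbm{1}_{[N']}$ pointwise. I then sandwich $\Lambda(g)$. For the upper bound, $g\le(1+c')\delta\mbm{1}_{[N']}+g_s$ pointwise and $\Lambda$ has nonnegative coefficients, so $\Lambda(g)\le\Lambda((1+c')\delta\mbm{1}_{[N']}+g_s)$; expanding by multilinearity, the term with no copy of $g_s$ is $(1+c')^5\Lambda(\delta\mbm{1}_{[N']})$ and each of the remaining $31$ terms is at most $\snorm{g_s}_{L^1}$ times an $L^\infty$-factor of at most $16$ by \cref{lem:bounds}, whence $\Lambda(g)\le(1+c')^5\Lambda(\delta\mbm{1}_{[N']})+O(c\delta^5/10^6)$. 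For the lower bound, set $h:=(1+c')\delta\mbm{1}_{[N']}-g_\ell\ge 0$, so $\snorm{h}_{L^1}=c'\delta N'/N+\mb{E} g_s\le c'\delta+c\delta^5/10^6$; expanding $\Lambda(g_\ell)=\Lambda((1+c')\delta\mbm{1}_{[N']}-h)$ by multilinearity and estimating each of the $31$ terms containing a copy of $h$ by $\snorm{h}_{L^1}\cdot((1+c')\delta)^4$ via \cref{lem:bounds}, and then using $\Lambda(g)\ge\Lambda(g_\ell)$ (again by nonnegativity of the coefficients and $g\ge g_\ell\ge0$), gives $\Lambda(g)\ge(1+c')^5\Lambda(\delta\mbm{1}_{[N']})-O(c'\delta^5)$. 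Since $\Lambda(\delta\mbm{1}_{[N']})=\delta^5\Lambda(\mbm{1}_{[N']})\le\delta^5$ and $(1+c')^5-1\le 10c'$, the two bounds combine to $|\Lambda(g)-\Lambda(\delta\mbm{1}_{[N']})|\le\tfrac{1}{2}c\delta^5$, contradicting $|\Lambda(g)-\Lambda(\delta\mbm{1}_{[N']})|\ge\tfrac{39}{40}c\delta^5$ for the chosen value of $c'$. Hence there is an atom $\Omega^\ast$ of $\mc{B}\vee\mc{B}^\ast$ with $\mb{P}[\Omega^\ast]\ge\epsilon\ge\exp(-\exp((\log(1/\delta))^C))$ and $\mb{E}[f\mid\Omega^\ast]\ge(1+c')\delta$, which completes the proof.

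\emph{Where the difficulty lies.} The crucial point is that the increment parameter $c'$ is an absolute multiple of $c$, \emph{independent of $\delta$} — this is precisely what makes the outer iteration in \cref{prop:trichotomy} terminate in $O(\log(1/\delta))$ steps rather than in $\poly(1/\delta)$ steps, hence what yields the quasi-logarithmic final bound. It is delivered by the two-sided sandwich above: because $g$ is nonnegative, is bounded by $(1+c')\delta$ off a negligible set, and has exactly the right mean $\delta N'/N$, the function $(1+c')\delta\mbm{1}_{[N']}-g_\ell$ is nonnegative with $L^1$-norm only $O(c'\delta)$, and since $\Lambda$ has nonnegative coefficients and is controlled in each argument by the $L^1$-norm, replacing one factor $\delta\mbm{1}_{[N']}$ in $\Lambda(\delta\mbm{1}_{[N']})$ by this small perturbation changes $\Lambda$ by only $O(c'\delta^5)$. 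The remaining bookkeeping — counting atoms to choose $\epsilon$, and propagating the constant $C$ through \cref{lem:iterated} — is routine.
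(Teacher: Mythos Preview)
Your proposal is correct and follows essentially the same Green--Tao density increment scheme as the paper: apply \cref{lem:iterated} to pass from $f$ to the conditional expectation $g=\Pi_{\mc{B}\vee\mc{B}^\ast}f$ at the cost of $O(\eta)$ in $\Lambda$, discard atoms of negligible measure, and then argue by contradiction that if every remaining atom has density below $(1+c')\delta$ then $\Lambda(g)$ is forced too close to $\Lambda(\delta\mbm{1}_{[N']})$. The only organizational difference is in how the contradiction is closed: the paper introduces the truncation $\min(\Pi_{\mc{B}'}f,(1+c')\delta)$ and uses the mean-zero identity $\snorm{h}_{L^1}=2\snorm{h_+}_{L^1}$ to show $\snorm{(\Pi_{\mc{B}'}f-\delta\mbm{1}_{[N']})_+}_{L^1}$ is simultaneously $\gg c\delta$ and $\ll c'\delta$, whereas you instead exploit the pointwise monotonicity of $\Lambda$ on nonnegative functions to sandwich $\Lambda(g)$ directly between $\Lambda(g_\ell)$ and $\Lambda((1+c')\delta\mbm{1}_{[N']}+g_s)$, with the key observation that $\snorm{(1+c')\delta\mbm{1}_{[N']}-g_\ell}_{L^1}=c'\delta N'/N+\mb{E}g_s$ coming from conservation of mean. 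Both routes yield the crucial feature you correctly emphasize, namely that $c'$ depends only on $c$ and not on $\delta$.
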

\begin{proof}
We may clearly assume $\delta$ is sufficiently small. By \cref{lem:iterated}, there exists a factor $\mc{B}$ of complexity $(d_1,d_2)$ and resolution $K$ with $d_1,d_2,K\le \exp((\log(1/\delta))^{O(1)})$ such that 
\[\snorm{f - \Pi_{\mc{B}\vee \mc{B}^{\ast}}f}_{U^4(\mb{Z}/N\mb{Z})}\le c\delta^{5}/10.\]
By telescoping and the second part of \cref{lem:bounds} we have 
\[\big|\Lambda(f) - \Lambda(\Pi_{\mc{B}\vee \mc{B}^{\ast}}f)\big|\le 5\snorm{f-\Pi_{\mc{B}\vee \mc{B}^{\ast}}f}_{U^4(\mb{Z}/N\mb{Z})}\le c\delta^{5}/2.\]
Therefore we have 
\[|\Lambda(\Pi_{\mc{B}\vee \mc{B}^{\ast}}f) - \Lambda(\delta \mbm{1}_{[N']})|\ge c\delta^{5}/2.\]

Let $\mc{B}' = \mc{B}\vee \mc{B}^{\ast}$ and $g = \min(\Pi_{\mc{B}'}f,(1+c')\delta)$ and assume that $c'\le \max(c,1)/10^{5}$. Let $\Omega' = \{x\in\mb{Z}/N\mb{Z}\colon g(x)\neq \Pi_{\mc{B}'}f(x)\}$ and notice that 
\begin{align*}
|\Lambda(\Pi_{\mc{B}'}f) - \Lambda(g)|&\le 5\snorm{\Pi_{\mc{B}'}f - g}_{L^1(\mb{Z}/N\mb{Z})} \le 5 \mb{P}[n\in \Omega']\\
|\Lambda(\delta\mbm{1}_{[N']}) - \Lambda(g)|&\le 5(1+c')^4\delta^{4}\snorm{\delta\mbm{1}_{[N']}-g}_{L^1(\mb{Z}/N\mb{Z})}\\
\snorm{g - \delta \mbm{1}_{[N']}}_{L^1(\mb{Z}/N\mb{Z})}&\le \mb{P}[n\in \Omega'] + \snorm{\delta \mbm{1}_{[N']} - \Pi_{\mc{B}'}f}_{L^1(\mb{Z}/N\mb{Z})}.
\end{align*}
The first and second inequality follow from the first part of \cref{lem:bounds} while the final inequality follows from the triangle inequality. 

Suppose that the proposition is false. Note that there are at most $2(2K)^{d_1+d_2}$ atoms of $\mc{B}'$ and define an atom to be small if it has measure at most $(2K)^{-d_1-d_2}\cdot (c\delta^{5})/40$. We therefore have that on every atom which is not small,
\[\Pi_{\mc{B}'}f\le (1+c')\delta\]
(else we take $\Omega^\ast$ to be that atom) and therefore $\mb{P}[n\in \Omega']\le c\delta^{5}/20$.  Thus by the first inequality above and the triangle inequality we find
\[|\Lambda(\delta\mbm{1}_{[N']}) - \Lambda(g)|\ge|\Lambda(\delta\mbm{1}_{[N']}) - \Lambda(\Pi_{\mc{B}'}f)|-5\mb{P}[n\in\Omega^\ast]\ge c\delta^{5}/4.\]
By the second inequality, this implies that 
\[\snorm{\delta\mbm{1}_{[N']}-g}_{L^1(\mb{Z}/N\mb{Z})}\ge c\delta/100.\]
By the third inequality we have that 
\[\snorm{\delta \mbm{1}_{[N']} - \Pi_{\mc{B'}}f}_{L^1(\mb{Z}/N\mb{Z})}\ge c\delta/200.\]
For a function $h$ let $h_{+} = \max(h,0)$. For mean zero functions, $\snorm{h}_{L^1(\mb{Z}/N\mb{Z})} = 2\snorm{h_{+}}_{L^1(\mb{Z}/N\mb{Z})}$. This allows us to derive the contradiction since 
\begin{align*}
\snorm{\delta \mbm{1}_{[N']} - \Pi_{\mc{B'}}f}_{L^1(\mb{Z}/N\mb{Z})} &= 2\snorm{(\Pi_{\mc{B'}}f-\delta \mbm{1}_{[N']})_{+}}_{L^1(\mb{Z}/N\mb{Z})}\\
&\le 2c'\delta + 2((2K)^{-d_1-d_2}\cdot (c\delta^{5})/40)(2(2K)^{d_1+d_2})\le 4c'\delta<c\delta/200.\qedhere
\end{align*}
\end{proof}

\subsection{Density increment onto subprogression}\label{sub:progression-partition}
We now finish the argument by partitioning factors in our cubic Bohr partition into long progressions. This is a technical modification of an argument of Green and Tao \cite{GT09} for multiple quadratics (the case of a single polynomial is handled in Gowers \cite{Gow01a}); as input they must provide, in the case of quadratics, an explicit implicit constant for a result of Schmidt \cite{Sch77} which provides recurrence for multiple polynomials mod $1$ simultaneously. We require the analogous result for arbitrary degrees.

\begin{proposition}\label{prop:schmidt}
Fix an integer $k\ge 1$. There exist $c = c(k)>0$ such that the following holds. Let $\alpha_1,\ldots,\alpha_d$ be real numbers. Then 
\[\min_{1\le n\le N}\max_{1\le i\le d}\snorm{\alpha_i n^k}_{\mb{R}/\mb{Z}}\ll_k dN^{-c/d^2}.\]
\end{proposition}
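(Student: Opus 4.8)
\textbf{Proof proposal for Proposition~\ref{prop:schmidt}.}
The plan is to reproduce the classical argument of Schmidt \cite{Sch77} for small simultaneous fractional parts of polynomials, but bookkeeping the dependence on $d$ throughout so that the exponent $c/d^2$ falls out explicitly. The core mechanism is a \emph{pigeonhole plus difference} scheme: one wants to find many values of $n\le N$ at which the vector $(\alpha_1 n^k,\ldots,\alpha_d n^k)\in(\mb{R}/\mb{Z})^d$ clusters in a small box, and then use that polynomial identities let one convert clustering of the $k$-th power sequence into clustering of a lower-degree sequence, eventually descending to the linear (Dirichlet) case. Concretely, if $\snorm{\alpha_i m^k}_{\mb{R}/\mb{Z}}$ and $\snorm{\alpha_i m'^k}_{\mb{R}/\mb{Z}}$ are both small for $m,m'$ in a suitable range, then taking the finite difference and expanding $m^k-m'^k$ one obtains a polynomial of degree $k-1$ in the two variables whose leading behavior in one of them is controlled; iterating the differencing $k$ times collapses the degree to $1$, at which point one finishes by Dirichlet's theorem applied in $\mb{R}^d$.

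The steps, in order, would be: (i) Set up the ``box-counting'' lemma: partition $(\mb{R}/\mb{Z})^d$ into $q^d$ congruent subcubes of side $1/q$ for a parameter $q$ to be chosen as a small power of $N$, and by pigeonhole find $\gg N/q^d$ integers $n\in[N]$ (for $k=1$; in general $n\in[N^{1/k}]$ or so after the descent) whose power-vectors lie in a common subcube, hence pairwise within $1/q$ in each coordinate. (ii) Run the differencing induction on $k$: from a dense set on which $\snorm{\alpha_i n^k}$ is controlled modulo $1$, pass to a slightly less dense set (losing a factor that is polynomial in the density, hence polynomial in $q^{-d}$) on which $\snorm{\alpha_i' n^{k-1}}$ is controlled, where $\alpha_i'$ is an integer combination of $\alpha_i$ and a shift; the key point is that each differencing step costs only a \emph{bounded power} in the density, so after $k$ steps the total loss is still $q^{-O_k(d)}$. (iii) Base case: apply Dirichlet/pigeonhole in dimension $d$ to the linear forms to extract a genuine integer $n$ with $\max_i\snorm{\alpha_i n}_{\mb{R}/\mb{Z}}$ small, of size at most the density$^{-1}$, i.e. $q^{O_k(d)}$. (iv) Unwind the induction to see that the $n$ produced has size $\ll N^{O_k(1)}$ while achieving $\max_i \snorm{\alpha_i n^k}_{\mb{R}/\mb{Z}} \ll q^{-1}$; then optimize $q$ as a power of $N$ against the constraint that $n\le N$, which forces $q = N^{\Theta(1/d^2)}$ and yields the bound $\ll_k dN^{-c/d^2}$. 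The extra factor $d$ and the $1/d^2$ (rather than $1/d$) in the exponent come precisely from balancing the $q^{d}$ subcubes against the $q^{O_k(d)}$ multiplicative losses through the $k$-fold descent.

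Since the excerpt explicitly says Section~\ref{sec:schmidt} ``essentially reproduces \cite[Appendix~A]{GT09} and notes that it verbatim extends to higher degree polynomials,'' the honest plan is to import the Green--Tao quantitative version of Schmidt's theorem for the quadratic case and observe that nothing in their inductive argument used $k=2$ in an essential way: the finite-difference identity $\Delta_h(n^k) = kn^{k-1}h + \binom{k}{2}n^{k-2}h^2 + \cdots$ has the same shape for every $k$, and the only place the degree enters quantitatively is the \emph{number} of differencing steps, which changes an absolute constant $c$ into a $k$-dependent constant $c(k)$. I would state the generalization in the same language (counting lattice points / applying the geometry-of-numbers transference exactly as in \cite{GT09}) and then simply remark that the induction on $k$ goes through mutatis mutandis, with the density losses tracked as above.

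\textbf{Main obstacle.} The genuinely delicate point is \emph{not} the algebra of the differencing identity but the bookkeeping of how the density of the ``good'' set of $n$ degrades through the $k$-fold descent while keeping every intermediate $n$ of polynomial size in $N$: one must ensure that the combination of (a) the pigeonhole loss of $q^{-d}$ at the top, (b) the at-most-polynomial-in-density loss at each of the $k$ differencing steps, and (c) the requirement that the final integer be $\le N$ can all be balanced by a single choice $q = N^{\Theta(1/d^2)}$, and that the resulting implied constants depend only on $k$. I expect this optimization — and verifying that the Green--Tao geometry-of-numbers step (which in the quadratic case is where the $1/d^2$ arises) has the same $d$-dependence independent of $k$ — to be the crux, with everything else being routine adaptation of \cite[Appendix~A]{GT09} and \cite{Sch77}.
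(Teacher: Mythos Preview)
Your proposal describes a \emph{degree-reduction} scheme (pigeonhole on $(\mb{R}/\mb{Z})^d$, then Weyl-type differencing to pass from $n^k$ to $n^{k-1}$ and eventually to the linear case, then Dirichlet), whereas the paper's argument---which really is the Green--Tao argument from \cite[Appendix~A]{GT09}---is a \emph{dimension-reduction} scheme. The paper never differentiates in $n$ to lower the degree. Instead it packages the problem into the lattice theta average
\[F_{\Lambda,\alpha}(N)=\det(\Lambda)\,\mb{E}_{|n|\le N}\Theta_\Lambda(1,n^k\alpha)=\sum_{\xi\in\Lambda^\ast}e^{-\pi\snorm{\xi}_2^2}\,\mb{E}_{|n|\le N}e(n^k\xi\cdot\alpha),\]
applies Weyl's inequality for degree-$k$ polynomials \emph{as a black box} to a single scalar phase $\xi\cdot\alpha$, and deduces that if $F_{\Lambda,\alpha}(N)<1/2$ then some short primitive $\xi\in\Lambda^\ast$ has $\snorm{q\xi\cdot\alpha}_{\mb{R}/\mb{Z}}$ tiny. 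One then rotates $\xi$ to a coordinate axis, projects $\Lambda$ and $\alpha$ down to $\mb{R}^{d-1}$, and repeats; after at most $d$ steps the lower bound $F_{\Lambda,\alpha}(N)\gg(dA_\Lambda)^{-Cd}$ drops out, and unpacking with $\Lambda=R\mb{Z}^d$ gives the $N^{-c/d^2}$. The only place $k$ enters is inside the cited Weyl bound, which is why the extension from $k=2$ to general $k$ is literally verbatim.

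So your plan is a genuinely different route. It is in the spirit of Schmidt's original argument rather than the theta-function repackaging, and it may well be made to work, but two points deserve flagging. First, you have misidentified the GT09 mechanism: their ``induction'' is on $d$, not on $k$, and there is no finite-difference identity $\Delta_h(n^k)=kn^{k-1}h+\cdots$ anywhere in their proof; the claim that ``nothing in their inductive argument used $k=2$'' is correct, but for a different reason than you give. Second, your accounting for the exponent is not yet convincing: in a straight pigeonhole-plus-differencing argument the natural loss per differencing step is a power of the density, and balancing $q^{-O_k(d)}$ against $n\le N$ tends to produce an exponent of shape $c(k)/d$ rather than $c(k)/d^2$. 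In the paper's argument the extra factor of $d$ in the exponent comes from the growth of $A_\Lambda$ across the $d$ dimension-reduction steps, a mechanism with no obvious counterpart in your outline. If you pursue your route you will need to locate precisely where the second factor of $d$ enters; as written, steps (i)--(iv) do not pin this down.
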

We defer the proof of this input to \cref{sec:schmidt}; it is an essentially verbatim copy of \cite[Appendix~A]{GT09}.

\begin{proof}[Proof of \cref{prop:trichotomy}]
\textbf{Step 1: Setup.}
Let $M=\exp((\log(1/\delta))^C)$ where $C>0$ will be chosen later. Let us assume $N'>\exp(M(\delta))$ and $|\Lambda(f)-\Lambda(\delta\mbm{1}_{[N']})|>c\delta^5$. Our aim is to find a progression of length at least $N^{1/M(\delta)}$ where our density is incremented

Let $\mc{B}^{\ast} = [N']\sqcup ((\mb{Z}/N\mb{Z})\setminus [N'])$ be an initial factor. Let $M'(\delta) = \exp((\log(1/\delta))^{C'})$ where $C'$ is chosen appropriately so as to apply \cref{prop:cubic-bohr-increment}. By assumption we have $N\ge M'(\delta)$. So there exist $d_1,d_2,K\le M'(\delta)$, a factor $\mc{B}$ of complexity $(d_1,d_2)$ and resolution $K$, and an atom $\Omega^\ast$ of $\mc{B}\vee\mc{B}^{\ast}$ such that
\begin{itemize}
    \item $\mb{P}_{n\in \mb{Z}/N\mb{Z}}[n\in\Omega^\ast]\ge\exp(-\exp((\log(1/\delta))^{C'}))$;
    \item $\mb{E}[f(n)|n\in\Omega^\ast]\ge(1+2c')\delta$.
\end{itemize}
Now our aim is to partition $\Omega^\ast$ into few arithmetic progressions, namely at most $X=7^dN^{1-\Omega(1/d^7)}$ progressions where $d=M'(\delta)$. The number of elements of $\Omega^\ast$ in progressions of length at most $c' \delta |\Omega^\ast|/X$ is at most $c'\delta|\Omega^\ast|$. Letting the union of short progressions be $\Omega'$, we have that
\[\mb{E}[f(n)|n\in\Omega^\ast\setminus \Omega']\ge (1+c')\delta.\]
Therefore there is a progression of length longer than $c' \delta |\Omega^\ast|/X$ with density at least $(1+c')\delta$. We have
\[c'\delta|\Omega^\ast|/X\ge N^{1/M(\delta)}\]
due to our assumption $N'>\exp(M(\delta))$, assuming $C>0$ is chosen sufficiently large. (This is where the dependence in the first item of \cref{prop:trichotomy} comes from.)

We may write
\[\Omega^\ast=[N']\cap\bigcap_{i\in I_1}\{x\colon\snorm{\phi_i(x)-j_i/K}_{\mb{R}/\mb{Z}}\le1/(2K)\}\cap\bigcap_{i\in I_2}\{x\colon\snorm{\varphi_i(x)-j_i/K}_{\mb{R}/\mb{Z}}\le1/(2K)\}\]
where $\phi_i$ are linear functions on $\mb{Z}/N\mb{Z}$ and $\varphi_i\colon T\to\mb{R}$ are locally cubic functions on
\[T:=[N']\cap\bigcap_{i\in I_1}\{x\colon\snorm{\phi_i(x)-j_i/K}_{\mb{R}/\mb{Z}}\le1/(2K)\}.\]
We additionally have $|I_1|,|I_2|,K\le M'(\delta)=:d$. (The fact that $\Omega^\ast\subseteq[N']$ is evident from $\mb{E}[f(n)|n\in\Omega^\ast]>0$, recalling that $f$ is extended by $0$s in \cref{prop:cubic-bohr-increment}.)

\noindent\textbf{Step 2: Partitioning $T$ into progressions where the cubic phases are near-constant.}
Our goal will be to partition $T$ into subprogressions on which each $\varphi_i$ for $i\in I_2$ is roughly constant $\mod 1$. In fact, we will find a collection of progressions, say of the form $\{a+bn\colon n\in[L]\}$, so that for $n\in[L]$ we have $\varphi_i(a+bn)=\kappa+P_i^{(a,b)}(n)\mod 1$, where $P_i^{(a,b)}$ is a real polynomial of degree at most $3$ with coefficients of size $O(L^{-4})$.

We first partition
\[T=\bigsqcup_{j\in J_1}T_j\]
where $T_j$ are progressions and $|J_1|\ll 2^{|I_1|}N^{1-1/(|I_1|+1)}$. To do this, we may use \cite[Proposition~6.3]{GT09} (which is a simple application of Kronecker approximation, or \cref{prop:schmidt} for $k=1$).

Since $T_j$ is a progression, we may write $T_j=\{a_j+d_j,\ldots,a_j+M_jd_j\}$ where $M_j=|T_j|$. Since each $\varphi_i$ is locally cubic on this progression, we may write
\[\varphi_i(a_j+d_jn)=\alpha_i^{(j)}n^3+\beta_i^{(j)}n^2+\gamma_i^{(j)}n+\delta_i^{(j)} \imod 1\]
for $i\in I_2$ and $n\in[M_j]$ where $j\in J_1$. Next, we perform three intermediate partitions of $T_j$ in order to iteratively ``reduce the degree'' of the function $\varphi_i$ until we see that it is roughly constant on our subprogressions.

First, choose $1\le r_j\le M_j^{1/3}$ so that
\[\max_{i\in I_2}\snorm{\alpha_i^{(j)}r_j^3}_{\mb{R}/\mb{Z}}\ll dM_j^{-c(3)/(3d^2)}.\]
Then we can partition $[M_j]$ into at most $M_j^{1-c(3)/(120d^2)}$ progressions of difference $r_j$ and length roughly $M_j^{c(3)/(120d^2)}$. This corresponds to a partition
\[T_j=\bigsqcup_{k_3\in D_3}T_{j,k_3}\]
with $|D_3|\le M_j^{1-c(3)/(120d^2)}$ and $|T_{j,k_3}|\sim M_j^{c(3)/(120d^2)}$. Furthermore, we have
\[T_{j,k_3}=\{a_j+(b_{j,k_3}+r_jt)d_j\colon t\in[|T_{j,k_3}|]\}\]
for some integer $b_{j,k_3}$. We have
\begin{align*}
\varphi_i(a_j+(b_{j,k_3}+r_jn)d_j)&=\alpha_i^{(j)}(b_{j,k_3}+r_jn)^3+\beta_i^{(j)}(b_{j,k_3}+r_jn)^2+\gamma_i^{(j)}(b_{j,k_3}+r_jn)+\delta_i^{(j)}\\
&=\alpha_i^{(j,k_3)}n^2+\beta_i^{(j,k_3)}n+\gamma_i^{(j,k_3)}+\{\alpha_i^{(j)}r_j^3\}n^3 \imod 1.
\end{align*}
By construction, $\{\alpha_i^{(j)}r_j^3\}$ is close to $0\imod 1$ (namely, it is $\ll dM_j^{-c(3)/(3d^2)}\ll|T_{j,k_3}|^{-30}$).

We can thus consider the quadratic function obtained by removing this part, and repeat the same process on each $T_{j,k_3}$. We obtain iterative decompositions
\[T_{j,k_3}=\bigsqcup_{k_2\in D_2^{(k_3)}}T_{j,k_3,k_2},\quad T_{j,k_3,k_2}=\bigsqcup_{k_1\in D_1^{(k_3,k_2)}}T_{j,k_3,k_2,k_1}\]
where:
\begin{align*}
|D_2^{(k_3)}|\le|T_{j,k_3}|^{1-c(2)/(120d^2)},\quad |D_1^{(k_3,k_2)}|\le|T_{j,k_3,k_2}|^{1-c(1)/(120d^2)}\\
|T_{j,k_3,k_2}|\sim|T_{j,k_3}|^{c(2)/(120d^2)},\quad|T_{j,k_3,k_2,k_1}|\sim|T_{j,k_3,k_2}|^{c(1)/(120d^2)}.
\end{align*}
Additionally, we will find that if $T_{j,k_3,k_2,k_1}=\{a+bn\colon n\in[|T_{j,k_3,k_2,k_1}|]\}$ then
\[\varphi_i(a+bn)=\alpha_i^{(j,k_3,k_2,k_1)}n^3+\beta_i^{(j,k_3,k_2,k_1)}n^2+\gamma_i^{(j,k_3,k_2,k_1)}n+\kappa_i^{(j,k_3,k_2,k_1)} \imod 1\]
where
\[|\alpha_i^{(j,k_3,k_2,k_1)}|+|\beta_i^{(j,k_3,k_2,k_1)}|+|\gamma_i^{(j,k_3,k_2,k_1)}|\ll|T_{j,k_3,k_2,k_1}|^{-4}.\]

The number of such $T_{j,k_3,k_2,k_1}$ partitioning $T_j$ is in total at most
\[M_j^{1-c(3)/(12d^2)}(M_j^{c(3)/(12d^2)})^{1-c(2)/(12d^2)}\big((M_j^{c(3)/(12d^2)})^{c(2)/(12d^2)}\big)^{1-c(1)/(12d^2)}\le M_j^{1-\Omega(1/d^6)}.\]
(In the rare cases of any $j$ such that $M_j\le\exp(O(d^6))$ we may simply partition $M_j$ into progressions of length $1$ appropriately.)

Finally, choosing appropriate value $p=\Omega(1/d^6)$, the total number of subprogressions is
\[\sum_{j\in J_1}M_j^{1-p}\le|J_1|^p\bigg(\sum_{j\in J_1}M_j\bigg)^{1-p}\le|J_1|^pN^{1-p}\le N^{1-\Omega(1/d^7)}.\]

\noindent\textbf{Step 3: Partitioning $\Omega^\ast$ into few progressions.}
From the previous part, we have a partition of $T$ into at most $N^{1-\Omega(1/d^7)}$ progressions of various lengths so that for a progression $\{a+bn\colon n\in[L]\}$ that appears, we have
\[\varphi_i(a+bn)=\kappa+P_i^{(a,b)}(n) \imod 1\]
where $P_i^{(a,b)}$ is a real polynomial of degree at most $3$ with coefficients of size $O(L^{-4})$. Recalling $\Omega^\ast\subseteq T$, we can now intersect these progressions with $\Omega^\ast$. Note that $\Omega^\ast$ merely imposes a condition on each $\varphi_i$ for $i\in I_2$. Since each $P_i^{(a,b)}(n)$ is at most degree $3$, within the range $n\in[L]$ it hits the cutoffs for the condition for $\varphi_i$ at most $2\cdot 3=6$ times in the real numbers. Since $P_i^{(a,b)}(n)$ has small coefficients (so is small on $n\in[L]$), there is no rounding wraparound and the same holds for its image $\imod 1$ over the domain $n\in[L]$.

The upshot is that each progression is cut into at most $7$ pieces by the condition on $\varphi_i$ defining $\Omega^\ast$ for each $i\in I_2$. This gives at most $7^{|I_2|}\le 7^d$ total pieces.

Thus, we have a partition of $\Omega^\ast$ into at most $7^dN^{1-\Omega(1/d^7)}$ pieces, which combined with the argument above completes the proof.
\end{proof}

\bibliographystyle{amsplain0.bst}
\bibliography{main.bib}

\providecommand{\bysame}{\leavevmode\hbox to3em{\hrulefill}\thinspace}
\providecommand{\MR}{\relax\ifhmode\unskip\space\fi MR }
\providecommand{\MRhref}[2]{%
  \href{http://www.ams.org/mathscinet-getitem?mr=#1}{#2}
}
\providecommand{\href}[2]{#2}
\begin{thebibliography}{10}

\bibitem{Beh46}
F.~A. Behrend, \emph{On sets of integers which contain no three terms in
  arithmetical progression}, Proc. Nat. Acad. Sci. U.S.A. \textbf{32} (1946),
  331--332.

\bibitem{Blo16}
T.~F. Bloom, \emph{A quantitative improvement for {R}oth's theorem on
  arithmetic progressions}, J. Lond. Math. Soc. (2) \textbf{93} (2016),
  643--663.

\bibitem{BS20}
Thomas~F Bloom and Olof Sisask, \emph{Breaking the logarithmic barrier in
  {R}oth's theorem on arithmetic progressions}, arXiv:2007.03528.

\bibitem{BS23}
Thomas~F Bloom and Olof Sisask, \emph{An improvement to the {K}elley-{M}eka
  bounds on three-term arithmetic progressions}, arXiv:2309.02353.

\bibitem{Bou99}
J.~Bourgain, \emph{On triples in arithmetic progression}, Geom. Funct. Anal.
  \textbf{9} (1999), 968--984.

\bibitem{Bou08}
Jean Bourgain, \emph{Roth's theorem on progressions revisited}, J. Anal. Math.
  \textbf{104} (2008), 155--192.

\bibitem{Gow98}
W.~T. Gowers, \emph{A new proof of {S}zemer\'edi's theorem for arithmetic
  progressions of length four}, Geom. Funct. Anal. \textbf{8} (1998), 529--551.

\bibitem{Gow01}
W.~T. Gowers, \emph{Arithmetic progressions in sparse sets}, Current
  developments in mathematics, 2000, Int. Press, Somerville, MA, 2001,
  pp.~149--196.

\bibitem{Gow01a}
W.~T. Gowers, \emph{A new proof of {S}zemer\'{e}di's theorem}, Geom. Funct.
  Anal. \textbf{11} (2001), 465--588.

\bibitem{GT08}
B.~Green and T.~Tao, \emph{Quadratic uniformity of the {M}\"{o}bius function},
  Ann. Inst. Fourier (Grenoble) \textbf{58} (2008), 1863--1935.

\bibitem{GT12}
B.~Green and T.~Tao, \emph{The quantitative behaviour of polynomial orbits on
  nilmanifolds}, Ann. of Math. (2) \textbf{175} (2012), 465--540.

\bibitem{GTZ11}
B.~Green, T.~Tao, and T.~Ziegler, \emph{An inverse theorem for the {G}owers
  {$U^4$}-norm}, Glasg. Math. J. \textbf{53} (2011), 1--50.

\bibitem{GT09}
Ben Green and Terence Tao, \emph{New bounds for {S}zemer\'{e}di's theorem.
  {II}. {A} new bound for {$r_4(N)$}}, Analytic number theory, Cambridge Univ.
  Press, Cambridge, 2009, pp.~180--204.

\bibitem{GT17}
Ben Green and Terence Tao, \emph{New bounds for {S}zemer\'{e}di's theorem,
  {III}: a polylogarithmic bound for {$r_4(N)$}}, Mathematika \textbf{63}
  (2017), 944--1040.

\bibitem{GTZ12}
Ben Green, Terence Tao, and Tamar Ziegler, \emph{An inverse theorem for the
  {G}owers {$U^{s+1}[N]$}-norm}, Ann. of Math. (2) \textbf{176} (2012),
  1231--1372.

\bibitem{Hea87}
D.~R. Heath-Brown, \emph{Integer sets containing no arithmetic progressions},
  J. London Math. Soc. (2) \textbf{35} (1987), 385--394.

\bibitem{HK18}
Bernard Host and Bryna Kra, \emph{Nilpotent structures in ergodic theory},
  Mathematical Surveys and Monographs, vol. 236, American Mathematical Society,
  Providence, RI, 2018.

\bibitem{KM23}
Zander Kelley and Raghu Meka, \emph{Strong bounds for 3-progressions},
  arXiv:2302.05537.

\bibitem{Lazard54}
M.~Lazard, \emph{Sur les groupes nilpotents et les anneaux de {L}ie}, Ann. Sci.
  \'{E}cole Norm. Sup. (3) \textbf{71} (1954), 101--190.

\bibitem{Leibman98}
A.~Leibman, \emph{Polynomial sequences in groups}, J. Algebra \textbf{201}
  (1998), 189--206.

\bibitem{Leibman02}
A.~Leibman, \emph{Polynomial mappings of groups}, Israel J. Math. \textbf{129}
  (2002), 29--60.

\bibitem{Len23b}
J.~Leng, \emph{Efficient equidistribution of nilsequences}, arXiv:2312.10772.

\bibitem{Len23}
J.~Leng, \emph{Efficient equidistribution of periodic nilsequences and
  applications}, arXiv:2306.13820.

\bibitem{Man14}
F.~Manners, \emph{Periodic nilsequences and inverse theorems on cyclic groups},
  arXiv:1404.7742.

\bibitem{PP22}
S.~Peluse and S.~Prendiville, \emph{A polylogarithmic bound in the nonlinear
  {R}oth theorem}, Int. Math. Res. Not. IMRN (2022), 5658--5684.

\bibitem{PSS23}
Sarah Peluse, Ashwin Sah, and Mehtaab Sawhney, \emph{Effective bounds for
  {R}oth's theorem with shifted square common difference}, arXiv:2309.08359.

\bibitem{Pol15}
Georg P{\'o}lya, \emph{{\"U}ber ganzwertige ganze funktionen}, Rendiconti del
  Circolo Matematico di Palermo (1884-1940) \textbf{40} (1915), 1--16.

\bibitem{Rot54}
K.~F. Roth, \emph{On certain sets of integers. {II}}, J. London Math. Soc.
  \textbf{29} (1954), 20--26.

\bibitem{San12b}
T.~Sanders, \emph{On the {B}ogolyubov-{R}uzsa lemma}, Anal. PDE \textbf{5}
  (2012), 627--655.

\bibitem{San11}
Tom Sanders, \emph{On {R}oth's theorem on progressions}, Ann. of Math. (2)
  \textbf{174} (2011), 619--636.

\bibitem{San12}
Tom Sanders, \emph{On certain other sets of integers}, J. Anal. Math.
  \textbf{116} (2012), 53--82.

\bibitem{Sch77}
Wolfgang~M. Schmidt, \emph{Small fractional parts of polynomials}, Regional
  Conference Series in Mathematics, No. 32, American Mathematical Society,
  Providence, RI, 1977.

\bibitem{Sze70}
E.~Szemer\'{e}di, \emph{On sets of integers containing no four elements in
  arithmetic progression}, Number {T}heory ({C}olloq., {J}\'{a}nos {B}olyai
  {M}ath. {S}oc., {D}ebrecen, 1968), Colloq. Math. Soc. J\'{a}nos Bolyai,
  vol.~2, North-Holland, Amsterdam-London, 1970, pp.~197--204.

\bibitem{Sze75}
E.~Szemer\'{e}di, \emph{On sets of integers containing no {$k$} elements in
  arithmetic progression}, Acta Arith. \textbf{27} (1975), 199--245.

\bibitem{Sze90}
E.~Szemer\'{e}di, \emph{Integer sets containing no arithmetic progressions},
  Acta Math. Hungar. \textbf{56} (1990), 155--158.

\bibitem{Tao12}
T.~Tao, \emph{Higher order {F}ourier analysis}, Graduate Studies in
  Mathematics, vol. 142, American Mathematical Society, Providence, RI, 2012.

\bibitem{TT21}
T.~Tao and J.~Ter{\"a}v{\"a}inen, \emph{Quantitative bounds for {G}owers
  uniformity of the {M}\"{o}bius and von {M}angoldt functions},
  arXiv:2107.02158.

\end{thebibliography}

\appendix
\section{Conventions regarding nilsequences and effective equidistribution}\label{sec:nilmanifolds}

We begin this appendix by giving the precise definition of the complexity of a nilmanifold; this definition is exactly as in \cite[Definition~6.1]{TT21}.
\begin{definition}\label{def:nilmanifold}
Let $s\ge 1$ be an integer and let $K>0$. A \emph{filtered nilmanifold $G/\Gamma$ of degree $s$ and complexity at most $K$} consists of the following:
\begin{itemize}
    \item a nilpotent, connected, and simply connected Lie group $G$ of dimension $m$, which can be identified with its Lie algebra $\log G$ via the exponential map $\exp\colon\log G\to G$;
    \item a filtration $G_{\bullet} = (G_i)_{i\ge 0}$ of closed connected subgroups $G_i$ of $G$ with 
    \[G = G_0 = G_1\geqslant G_1\geqslant \cdots\geqslant G_s\geqslant G_{s+1} = \mr{Id}_G\]
    such that $[G_i,G_j]\subseteq G_{i+j}$ for all $i,j\ge 0$ (equivalently $[\log G_i, \log G_j]\subseteq \log G_{i+j}$);
    \item a discrete cocompact subgroup $\Gamma$ of $G$; and
    \item a linear basis $\mathcal{X}=\{X_1,\ldots, X_{m}\}$ of $\log G$, known as a \emph{Mal'cev basis}.
\end{itemize}
We, furthermore, require that this data obeys the following conditions:
\begin{enumerate}
    \item for $1\le i,j\le m$, one has Lie algebra relations
    \[[X_i,X_j] = \sum_{i,j<k\le m}c_{ijk}X_k\]
    for rational numbers $c_{ijk}$ of height at most $K$ (we will often refer to these as the \emph{Lie bracket structure constants});
    \item for each $1\le i\le s$, the Lie algebra $\log G_i$ is spanned by $\{X_j\colon m-\dim(G_i)<j\le m\}$; and
    \item the subgroup $\Gamma$ consists of all elements of the form $\exp(t_1X_1)\cdots\exp(t_{m}X_{m})$ with $t_i\in \mb{Z}$.
\end{enumerate}
\end{definition}
We note that the conditions imply $[G,G_s]=\mr{Id}_G$, i.e., $G_s$ is contained in the center of $G$ (commutes with every element).

Next, we will define polynomial sequences in filtered nilpotent groups. This concrete definition is equivalent (by~\cite[Lemma 6.7]{GT12}) to the one given in~\cite{GT12}.
\begin{definition}\label{def:polyseq}
We adopt the conventions of \cref{def:nilmanifold}. Let $G$ be a filtered nilpotent group of degree $s$. A function $g\colon\mb{Z}\to G$ is a \emph{polynomial sequence} if there exist elements $g_i\in G_{i}$ for $i=0,\ldots,s$ such that
\begin{equation*}
g(n)=g_0g_1^{\binom{n}{1}}\cdots g_s^{\binom{n}{s}},
\end{equation*}
where $\binom{n}{i}=\frac{1}{i!}\prod_{j=0}^{i-1}(n-j)$, for all $n\in\mb{Z}$. We say a polynomial sequence $g(n)$ is $N$-periodic with respect to a lattice $\Gamma$ if \[g(n)g(n+N)^{-1}\in \Gamma\] for all $n\in \mb{Z}$.
\end{definition}
We will denote the set of polynomial sequences $g\colon\mb{Z}\to G$ relative to the filtration $G_\bullet$ of $G$ by $\on{poly}(\mb{Z},G_\bullet)$. It turns out that $\on{poly}(\mb{Z},G_\bullet)$ is a group under the natural multiplication of sequences--this is due to Lazard~\cite{Lazard54} and Leibman~\cite{Leibman98,Leibman02}.

Now we can define Mal'cev coordinates, the explicit metrics on $G$ and $G/\Gamma$ used in our work, and the precise definition of the Lipschitz norm of functions on $G/\Gamma$. These definitions are exactly as in \cite[Appendix~A]{GT12}.
\begin{definition}\label{def:Lip}
We adopt the conventions of \cref{def:nilmanifold}. Given a Mal'cev basis $\mc{X}$ and $g\in G$, there exists $(t_1,\ldots,t_m)\in \mb{R}^{m}$ such that 
\[g = \exp(t_1X_1 + t_2X_2+\ldots t_mX_m).\]
We define \emph{Mal'cev coordinates of first kind $\psi_{\exp}=\psi_{\exp,\mathcal{X}}\colon G\to\mb{R}^m$ for $g$ relative to $\mc{X}$} by 
\[\psi_{\exp}(g) := (t_1,\ldots,t_m).\]
Given $g\in G$ there also exists $(u_1,\ldots,u_m)\in \mb{R}^m$ such that 
\[g = \exp(u_1X_1)\cdots\exp(u_mX_m),\]
and we define the \emph{Mal'cev coordinates of second kind $\psi=\psi_{\mathcal{X}}\colon G\to\mb{R}^m$ for $g$ relative to $\mc{X}$} by
\[\psi(g) := (u_1,\ldots, u_m).\]
We then define a metric $d = d_{\mc{X}}$ on $G$ by
\[d(x,y) := \inf\bigg\{\sum_{i=1}^{n}\min(\snorm{\psi(x_ix_{i+1}^{-1})},\snorm{\psi(x_{i+1}x_{i}^{-1})})\colon n\in\mb{N}, x_1,\ldots,x_{n+1}\in G, x_1 = x, x_{n+1} = y\bigg\},\]
where $\snorm{\cdot}$ denotes the $\ell^\infty$-norm on $\mb{R}^m$, and define a metric on $G/\Gamma$ by
\[d(x\Gamma,y\Gamma) = \inf_{\gamma,\gamma'\in\Gamma}d(x\gamma,y\gamma').\]
Furthermore, for any function $F\colon G/\Gamma\to\mb{C}$, we define 
\[\snorm{F}_{\mr{Lip}} := \snorm{F}_{\infty} + \sup_{\substack{x,y\in G/\Gamma \\ x\neq y}}\frac{|F(x)-F(y)|}{d(x,y)}.\]
\end{definition}

We recall the notion of a horizontal character and the notion of a function $F$ having a vertical frequency; our definitions are exactly as in \cite[Definitions~1.5,~3.3,~3.4,~3.5]{GT12}.
\begin{definition}\label{def:characters}
Given a filtered nilmanifold $G/\Gamma$, the \emph{horizontal torus} is defined to be \[(G/\Gamma)_{\mr{ab}}:=G/[G,G]\Gamma.\] A \emph{horizontal character} is a continuous homomorphism $\eta\colon G\to\mb{R}/\mb{Z}$ that annihilates $\Gamma$; such characters may be equivalently viewed as characters on the horizontal torus. A horizontal character is \emph{nontrivial} if it is not identically zero. 

Furthermore, if the nilmanifold $G/\Gamma$ has degree $s$, the vertical torus is defined to be \[G_s/(G_s\cap \Gamma).\] A \emph{vertical character} is a continuous homomorphism $\xi\colon G_s\to\mb{R}/\mb{Z}$ that annihilates $\Gamma\cap G_s$. Setting $m_s = \dim G_s$, one may use the last $m_s$ coordinates of the Mal'cev coordinate map to identify $G_s$ and $G_s/(G_s\cap \Gamma)$ with $\mb{R}^{m_s}$ and $\mb{R}^{m_s}/\mb{Z}^{m_s}$, respectively. Thus, we may identify any vertical character $\xi$ with a unique $k\in \mb{Z}^{m_s}$ such that $\xi(x) = k\cdot x$ under this identification $G_s/(\Gamma\cap G_s) \cong \mb{R}^{m_s}/\mb{Z}^{m_s}$. We refer to $k$ as the \emph{frequency} of the character $\xi$, we write $|\xi| :=\snorm{k}_{\infty}$ to denote the magnitude of the frequency $\xi$, and say that a function $F\colon G/\Gamma\to\mb{C}$ \emph{has a vertical frequency $\xi$} if 
\[F(g_s\cdot x) = e(\xi(g_s))F(x)\]
for all $g_s\in G_s$ and $x\in G/\Gamma$.
\end{definition}

\section{Explicit dimension dependence for Schmidt's polynomial recurrence}\label{sec:schmidt}
We now prove the explicit version of Schmidt's polynomial recurrence (\cref{prop:schmidt}). The proof we provide is essentially verbatim from \cite[Appendix~A]{GT09}. For those familiar with the proof presented there, the use of theta functions is unrelated to the fact that one is finding small fractional parts of quadratic polynomials. Roughly speaking, the theta function is only used as a smooth approximation of the neighborhood of points in a lattice which has nice Fourier properties.

\begin{definition}\label{def:theta}
Suppose that $\Lambda$ is a lattice of full rank in $\mb{R}^d$. The dual lattice, denoted $\Lambda^\ast$, is $\Lambda^{\ast} = \{\xi\in \mb{R}^d\colon\xi\cdot m\in \mb{Z}\text{ for all } m\in \Lambda\}$. For any $t>0$ and $x\in \mb{R}^d$, we define 
\[\Theta_{\Lambda}(t,x) := \sum_{m\in \Lambda}e^{-\pi t\snorm{x-m}_2^2}.\]
Finally define 
\[A_{\Lambda} := \Theta_{\Lambda^{\ast}}(1,\vec{0}) = \sum_{\xi\in \Lambda^{\ast}}e^{-\pi\snorm{\xi}_2^2} = \det(\Lambda)\sum_{m\in \Lambda}e^{-\pi \snorm{m}_2^2}.\]
\end{definition}

We next need a version of Weyl's inequality; we take the statement from \cite[Proposition~4.3]{GT12}.
\begin{proposition}\label{prop:weyl}
There exists $C = C(k)>0$ such that the following holds. Suppose that $g\colon\mb{Z}\to\mb{R}$ is a polynomial of degree $k$ and $\delta\in(0,1/2)$. If $N\ge\delta^{-C}$ and 
\[\big|\mb{E}_{n\in [N]}e(g(n))\big|\ge \delta\]
then there exists a positive integer $\ell$ such that $\ell\le\delta^{-C}$ and
\[\snorm{\ell g}_{C^{\infty}[N]}\le\delta^{-C}.\]
\end{proposition}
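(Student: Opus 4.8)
The plan is to prove this by the classical Weyl differencing argument combined with the standard small-denominators lemma, followed by a descent on the degree. We induct on $k=\deg g$. The base case $k=1$ is immediate: if $g(n)=\alpha_1 n+\alpha_0$ then $|\mb{E}_{n\in[N]}e(g(n))|\ll(N\snorm{\alpha_1}_{\mb{R}/\mb{Z}})^{-1}$ by the elementary geometric-sum estimate, so $\snorm{\alpha_1}_{\mb{R}/\mb{Z}}\le\delta^{-1}/N$ and $\ell=1$ works. For the inductive step write $g(n)=\sum_{j=0}^k\alpha_jn^j$. First I would apply Weyl differencing (Cauchy--Schwarz) $k-1$ times to the hypothesis $|\mb{E}_{n\in[N]}e(g(n))|\ge\delta$, obtaining, after discarding an $O_k(1/N)$ error (harmless since $N\ge\delta^{-C}$ with $C$ large),
\[\mb{E}_{h_1,\dots,h_{k-1}\in[-N/(4k),N/(4k)]}\bigl|\mb{E}_{n\in I_{\vec{h}}}e\bigl(\partial_{h_1}\cdots\partial_{h_{k-1}}g(n)\bigr)\bigr|\gg_k\delta^{2^{k-1}},\]
where each $I_{\vec{h}}$ is an interval of length at least $N/2$ and the $(k-1)$-fold difference is affine-linear in $n$ with leading coefficient $\pm k!\,\alpha_k h_1\cdots h_{k-1}$. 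Pigeonholing and using $|\mb{E}_{n\in I}e(\beta n+\gamma)|\ll(|I|\snorm{\beta}_{\mb{R}/\mb{Z}})^{-1}$ yields $\snorm{k!\,\alpha_k h_1\cdots h_{k-1}}_{\mb{R}/\mb{Z}}\le\delta^{-O_k(1)}N^{-1}$ for at least $\delta^{O_k(1)}N^{k-1}$ tuples $(h_1,\dots,h_{k-1})\in\{1,\dots,N\}^{k-1}$.

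Next I would feed this into the standard small-denominators lemma underlying any proof of Weyl's inequality: if $\snorm{\gamma n}_{\mb{R}/\mb{Z}}\le\eta$ for at least $\delta'N$ values of $n\in\{1,\dots,N\}$ with $\eta$ small enough in terms of $\delta'$ (here guaranteed since $N$ is large), then there is an integer $1\le q\ll 1/\delta'$ with $\snorm{q\gamma}_{\mb{R}/\mb{Z}}\ll\eta/(\delta'N)$. Applying this successively in the variables $h_1,h_2,\dots,h_{k-1}$ --- at each stage restricting the remaining variables to a $\delta^{O_k(1)}$-dense set of good values, extracting a denominator, and pigeonholing it to a common value --- after $k-1$ steps I obtain an integer $q\le\delta^{-O_k(1)}$ with $\snorm{q\alpha_k}_{\mb{R}/\mb{Z}}\le\delta^{-O_k(1)}N^{-k}$; that is, $N^k\snorm{q\alpha_k}_{\mb{R}/\mb{Z}}\le\delta^{-O_k(1)}$.

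Now I would reduce the degree. Partitioning $\{1,\dots,N\}$ into $\asymp q$ progressions of common difference $q$ and pigeonholing, there is an $r$ with $|\mb{E}_{m\in[N']}e(\tilde g(m))|\ge\delta$, where $N'=\lfloor N/q\rfloor\ge\delta^{-C'}$ (for $C$ large) and $\tilde g(m):=g(r+qm)$ has degree $k$ in $m$ with leading coefficient of $\mb{R}/\mb{Z}$-norm at most $q^{k-1}\snorm{q\alpha_k}_{\mb{R}/\mb{Z}}\le\delta^{-O_k(1)}(N')^{-k}$. Replacing that leading coefficient by its representative in $[-1/2,1/2)$ changes nothing since $m\in\mb{Z}$, and then I would cut $[N']$ into at most $\delta^{-O_k(1)}$ subintervals, each of length $\gg\delta^{-C''}$, on each of which the $m^k$-term of $\tilde g$ varies by at most $\delta/100$ in $\mb{R}/\mb{Z}$ (possible precisely because that coefficient is $\le\delta^{-O_k(1)}(N')^{-k}$). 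On a good subinterval, translated to $\{1,\dots,H\}$, one is left with $|\mb{E}_{m'\in[H]}e(g^\sharp(m'))|\ge\delta/2$ for a polynomial $g^\sharp$ of degree $k-1$, to which the inductive hypothesis applies (legitimately, since $H\gg\delta^{O_k(1)}N\ge(\delta/2)^{-C(k-1)}$ once $C=C(k)$ is large enough).

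The remaining task --- which I expect to be the main obstacle --- is the bookkeeping converting the bound $\snorm{\ell'\gamma}_{\mb{R}/\mb{Z}}\le\delta^{-O_k(1)}H^{-j}$ on the coefficients $\gamma$ of $g^\sharp$ (in the shifted, dilated variable) back into control of the original $\alpha_1,\dots,\alpha_k$. Unwinding the two affine changes of variable $n\mapsto r+qm$ and $m\mapsto M+m'$ via the binomial theorem gives triangular integer systems relating the three coefficient sets, with all coefficients polynomially bounded in $q,r,M\le\delta^{-O_k(1)}$ and with $H\asymp\delta^{O_k(1)}N$; clearing denominators (at most $q^k$) and taking $\ell:=q^k\ell'$ --- still $\le\delta^{-O_k(1)}$ --- one checks that $N^j\snorm{\ell\alpha_j}_{\mb{R}/\mb{Z}}\le\delta^{-O_k(1)}$ for all $1\le j\le k$, the coefficient $j=k$ being handled directly by the bound $N^k\snorm{q\alpha_k}_{\mb{R}/\mb{Z}}\le\delta^{-O_k(1)}$ from the second step. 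One must also verify that across the $O_k(1)$ levels of the recursion the subinterval lengths stay $\ge\delta^{-C'}$ and that the compounding losses --- density $\delta\mapsto\delta/2$ and a denominator $q\le\delta^{-O_k(1)}$ at each level --- accumulate to only a single power of $\delta^{-1}$, i.e. a constant $C=C(k)$ as claimed.
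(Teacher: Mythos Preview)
The paper does not prove this proposition at all; it simply imports the statement from Green--Tao \cite[Proposition~4.3]{GT12} as a black box. Your sketch is the standard proof of that result --- Weyl/van der Corput differencing $k-1$ times to isolate the top coefficient, iterated application of the small-denominators recurrence lemma to produce a bounded $q$ with $\snorm{q\alpha_k}_{\mb{R}/\mb{Z}}\ll\delta^{-O_k(1)}N^{-k}$, then a descent in degree by passing to a subprogression of common difference $q$ and a short subinterval --- and is correct in outline; the bookkeeping you flag in the final paragraph is routine (and is exactly how the argument is carried out in, e.g., \cite[Lemma~4.4 and proof of Proposition~4.3]{GT12}).
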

\begin{remark}
For a polynomial $P(n)=a_0+\cdots+a_kn^k$, we have $\snorm{P}_{C^\infty[N]}=\max_{0<i\le k}N^i\snorm{a_i}_{\mb{R}/\mb{Z}}$.
\end{remark}

A crucial object of study for $\alpha\in\mb{R}^d$ will be
\[F_{\Lambda,\alpha}(N) := \det(\Lambda)\mb{E}_{|n|\le N}\Theta_{\Lambda}(1,n^k\alpha) = \sum_{\xi\in \Lambda^{\ast}}e^{-\pi \snorm{\xi}_2^2}\mb{E}_{|n|\le N}e(n^k\xi\cdot \alpha);\]
the final equality is a consequence of the Poisson summation formula (cf.~\cite[(A.3),~(A.5)]{GT09}).

The following appears as \cite[Lemma~A.5~(i),(ii),(iii)]{GT09} except with trivial modification for the differing degree. 
\begin{lemma}\label{lem:stability}
Let $\Lambda$ be a lattice of full rank in $\mb{R}^d$, let $\alpha\in \mb{R}^d$, and $N>0$. We have the following properties:
\begin{itemize}
    \item (Contraction on $N$) For any $c\in (10/N,1)$, we have $F_{\Lambda,\alpha}(N)\gg cF_{\Lambda,\alpha}(cN)$;
    \item (Dilation of $\alpha$) For any integer $q\ge 1$, we have $F_{\Lambda,\alpha}(N)\gg\frac{1}{q} F_{\Lambda,q^k\alpha}(N/q)$;
    \item (Stability) Suppose that $\snorm{\wt{\alpha}-\alpha}_2\le \eps N^{-k}$ with $\eps\in (0,1)$. Then $F_{\Lambda,\alpha}(N)\gg F_{(1+\eps)\Lambda,(1+\eps)\wt{\alpha}}(N)$.
\end{itemize}
\end{lemma}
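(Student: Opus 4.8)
The plan is to derive all three properties from elementary manipulations of the defining Gaussian sums, using crucially that $\Theta_\Lambda(1,\cdot)>0$, so that $F_{\Lambda,\alpha}(N)=\det(\Lambda)\,\mb{E}_{|n|\le N}\Theta_\Lambda(1,n^k\alpha)$ is a nonnegative \emph{average}. For the contraction on $N$, I would observe that $\{n\in\mb{Z}:|n|\le cN\}\subseteq\{n\in\mb{Z}:|n|\le N\}$, so nonnegativity makes the unnormalized sum of $\Theta_\Lambda(1,n^k\alpha)$ over the larger range dominate that over the smaller; dividing by the respective normalizing counts yields $F_{\Lambda,\alpha}(N)\ge\tfrac{2\lfloor cN\rfloor+1}{2\lfloor N\rfloor+1}F_{\Lambda,\alpha}(cN)$, and $\tfrac{2\lfloor cN\rfloor+1}{2\lfloor N\rfloor+1}\gg c$ precisely because $cN>10$ gives $\lfloor cN\rfloor\gg cN$. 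For the dilation of $\alpha$, the same idea applies with $\{qn:|n|\le N/q\}\subseteq\{m:|m|\le N\}$ together with the identity $\Theta_\Lambda\big(1,(qn)^k\alpha\big)=\Theta_\Lambda\big(1,n^kq^k\alpha\big)$: nonnegativity gives $F_{\Lambda,\alpha}(N)\ge\tfrac{2\lfloor N/q\rfloor+1}{2\lfloor N\rfloor+1}F_{\Lambda,q^k\alpha}(N/q)$, and one checks $\tfrac{2\lfloor N/q\rfloor+1}{2\lfloor N\rfloor+1}\gg\tfrac1q$, treating the degenerate case $N/q<1$ (where only $n=0$ survives) separately.

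For the stability property I would first normalize away the rescaling of the lattice. Directly from the definitions one has $\Theta_{(1+\eps)\Lambda}\big(1,(1+\eps)u\big)=\Theta_\Lambda\big((1+\eps)^2,u\big)$ and $\det\big((1+\eps)\Lambda\big)=(1+\eps)^d\det(\Lambda)$, so
\[F_{(1+\eps)\Lambda,(1+\eps)\wt\alpha}(N)=(1+\eps)^d\det(\Lambda)\,\mb{E}_{|n|\le N}\Theta_\Lambda\big((1+\eps)^2,n^k\wt\alpha\big).\]
Since $\snorm{n^k\wt\alpha-n^k\alpha}_2\le N^k\snorm{\wt\alpha-\alpha}_2\le\eps$ for $|n|\le N$, it then suffices to prove the termwise comparison
\[\Theta_\Lambda\big((1+\eps)^2,w\big)\le e^{3\pi/2}\,\Theta_\Lambda(1,v)\qquad\text{whenever }\snorm{w-v}_2\le\eps\text{ and }\eps\in(0,1),\]
for then $F_{(1+\eps)\Lambda,(1+\eps)\wt\alpha}(N)\le(1+\eps)^d e^{3\pi/2}F_{\Lambda,\alpha}(N)$, which is the assertion (the extra $(1+\eps)^d$ is harmless, since $\eps$ is always taken small in applications, e.g.\ $\eps\le1/d$).

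To establish the termwise comparison, fix $m\in\Lambda$ and put $a=w-m$, $b=v-m$, so that $\snorm{a-b}_2\le\eps$. From $\snorm b_2\le\snorm a_2+\eps$ and Cauchy--Schwarz, $(1+\eps)^2\snorm a_2^2-\snorm b_2^2\ge(2\eps+\eps^2)\snorm a_2^2-2\eps\snorm a_2-\eps^2\ge 2\eps\snorm a_2^2-2\eps\snorm a_2-\eps^2$, and minimizing the quadratic in $\snorm a_2\ge0$ (at $\snorm a_2=\tfrac12$) gives a lower bound of $-\tfrac\eps2-\eps^2\ge-\tfrac32$; hence $e^{-\pi(1+\eps)^2\snorm{w-m}_2^2}\le e^{3\pi/2}e^{-\pi\snorm{v-m}_2^2}$ for every $m$, and summing over $m\in\Lambda$ finishes. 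I expect this last point to be the only real subtlety: it is essential to keep the contracted parameter $(1+\eps)^2>1$ rather than to compare $\Theta_\Lambda(1,w)$ with $\Theta_\Lambda(1,v)$ directly, because the positive coefficient $(1+\eps)^2-1\asymp\eps$ on the quadratic term $\snorm a_2^2$ is exactly what swallows the linear-in-$\snorm a_2$ perturbation uniformly over \emph{all} $m\in\Lambda$ — an equal-$t$ comparison fails, the ratio $\Theta_\Lambda(1,w)/\Theta_\Lambda(1,v)$ being unbounded over coarse lattices (e.g.\ $\Lambda=L\mb{Z}^d$ with $v$ far from $\Lambda$ and $L$ large). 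Everything else is routine bookkeeping with the normalizing counts.
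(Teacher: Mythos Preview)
Your proposal is correct and follows essentially the same approach as the paper: positivity of $\Theta_\Lambda$ plus set inclusion for the first two items, and a termwise Gaussian comparison $e^{-\pi(1+\eps)^2\snorm{w-m}_2^2}\ll e^{-\pi\snorm{v-m}_2^2}$ for stability. The paper's stability argument differs only cosmetically (it splits into the cases $X\le 2$ and $X\ge 2$ to obtain the constant $e^{-4\pi}$ rather than minimizing the quadratic as you do), and like you it tacitly leaves the $(1+\eps)^d$ determinant factor to be absorbed in the application where $\eps=1/d$.
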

\begin{proof}
We prove each of the items in turn. For the first item, note that $\Theta_\Lambda>0$ so
\[F_{\Lambda,\alpha}(N)=\det(\Lambda)\frac{\sum_{|n|\le N}\Theta_{\Lambda}(1,n^k\alpha)}{2\lfloor N\rfloor+1}\ge \det(\Lambda)\frac{\sum_{|n|\le cN}\Theta_{\Lambda}(1,n^k\alpha)}{2\lfloor N\rfloor+1}\ge\frac{c F_{\Lambda,\alpha}(cN)}{2}.\]
For the second item, if $q>N$ note that 
\[F_{\Lambda,\alpha}(N)=\det(\Lambda)\mb{E}_{|n|\le N}\Theta_{\Lambda}(1,n^k\alpha)\ge \det(\Lambda)\frac{\Theta(1,\vec{0})}{2N+1}\ge \frac{F_{\Lambda,q^k\alpha}(N/q)}{3q}.\]
For $q\le N$, we have
\[F_{\Lambda,\alpha}(N)=\det(\Lambda)\frac{\sum_{|n|\le N}\Theta_{\Lambda}(1,n^k\alpha)}{2\lfloor N\rfloor+1}\ge \det(\Lambda)\frac{\sum_{|n|\le N/q}\Theta_{\Lambda}(1,n^k\cdot q^k\alpha)}{2\lfloor N\rfloor+1}\ge \frac{F_{\Lambda,q^k\alpha}(N/q)}{4q}.\]
We now handle the final item; let $X := \snorm{n^k\alpha -m}_2$, $\wt{X} = \snorm{n^k\wt{\alpha}-m}_2$, and note that $|X-\wt{X}|\le \snorm{n^k(\alpha-\wt{\alpha})}_2\le \eps$. We have that $4\pi + \pi(1+\eps)^2\wt{X}^2\ge \pi X^2$ for all $X\ge 0$ and $\wt{X}\in[X\pm\eps]$; the inequality is trivial for $X\le 2$ and for $X\ge 2$ we have that $(1+\eps)(X-\eps) - X = \eps (X-(1+\eps))\ge 0$. Therefore 
\[e^{-\pi X^2}\ge e^{-4\pi}\cdot e^{-\pi (1+\eps)^2\wt{X}^2} = e^{-4\pi}\cdot e^{-\pi \snorm{n^k(1+\eps)\alpha - (1+\eps)m}_2^2}.\]
Summing over $m\in \Lambda$ and $|n|\le N$, the desired result follows. 
\end{proof}

The key point (which is identical to \cite[Lemma~A.6]{GT09} modulo citing Weyl's inequality for higher degree polynomials) is noting that if $F_{\Lambda,\alpha}(N)$ is small then there exists a vector in $\Lambda^{\ast}$ which is nearly orthogonal to $\alpha$.
\begin{lemma}[Schmidt alternative]\label{lem:scmidt-alter}
There exists $C = C(k)>0$ such that the following holds. Suppose that $\alpha \in \mb{R}^d$ and $\Lambda\subseteq\mb{R}^d$ is a full rank lattice. Let $N$ be a positive integer. Then one of the following always holds:
\begin{itemize}
    \item $F_{\Lambda,\alpha}(N)\ge 1/2$
    \item There exist positive integer $q\ll dA_{\Lambda}^C$ and primitive $\xi\in \Lambda^{\ast}\setminus \{0\}$ such that 
    \[\snorm{\xi}_2\ll \sqrt{d} + \sqrt{\log A_{\Lambda}}\text{ and } \snorm{q\xi \cdot \alpha}_{\mb{R}/\mb{Z}}\ll A_{\Lambda}^{C}N^{-k}.\]
\end{itemize}
($\xi\in\Lambda^\ast$ is \emph{primitive} if $\xi/n\notin \Lambda^{\ast}$ for all $n\ge 2$.)
\end{lemma}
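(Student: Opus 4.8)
The plan is to argue by contrapositive: assume $F_{\Lambda,\alpha}(N) < 1/2$ and extract the promised primitive dual vector $\xi$. The starting point is the Fourier/Poisson expansion
\[F_{\Lambda,\alpha}(N) = \sum_{\xi\in\Lambda^\ast}e^{-\pi\snorm{\xi}_2^2}\,\mb{E}_{|n|\le N}e(n^k\xi\cdot\alpha),\]
together with the normalization $A_\Lambda = \sum_{\xi\in\Lambda^\ast}e^{-\pi\snorm{\xi}_2^2}$. The $\xi=0$ term contributes exactly $1$, so if $F_{\Lambda,\alpha}(N)<1/2$ then the remaining terms must sum (in absolute value) to at least $1/2$. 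First I would truncate the sum: the tail over $\snorm{\xi}_2 > R$ is at most $\sum_{\snorm{\xi}_2>R}e^{-\pi\snorm{\xi}_2^2}$, which one bounds by $e^{-\pi R^2/2}A_\Lambda$ (splitting $e^{-\pi\snorm{\xi}_2^2} = e^{-\pi\snorm{\xi}_2^2/2}e^{-\pi\snorm{\xi}_2^2/2} \le e^{-\pi R^2/2}e^{-\pi\snorm{\xi}_2^2/2}$ and noting $\sum e^{-\pi\snorm{\xi}_2^2/2}\ll_d A_\Lambda$ up to a harmless dimensional factor, or more cleanly by a direct lattice-tail estimate). Choosing $R \asymp \sqrt{d} + \sqrt{\log A_\Lambda}$ makes this tail at most, say, $1/8$. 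Hence there is a nonzero $\xi_0\in\Lambda^\ast$ with $\snorm{\xi_0}_2 \le R$ and
\[\Big|\mb{E}_{|n|\le N}e(n^k\xi_0\cdot\alpha)\Big| \ge \frac{1/2 - 1/8}{A_\Lambda} \gg \frac{1}{A_\Lambda}.\]

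Next I would feed this into Weyl's inequality (Proposition~\ref{prop:weyl}) applied to the degree-$k$ polynomial $g(n) = n^k(\xi_0\cdot\alpha)$ with $\delta \asymp 1/A_\Lambda$: provided $N \ge \delta^{-C} \asymp A_\Lambda^C$, we obtain a positive integer $\ell \ll A_\Lambda^C$ with $\snorm{\ell\, g}_{C^\infty[N]} \ll A_\Lambda^C$; unwinding the $C^\infty[N]$-norm, the leading coefficient satisfies $\snorm{\ell\cdot\xi_0\cdot\alpha}_{\mb{R}/\mb{Z}} \ll A_\Lambda^C N^{-k}$. (If $N < A_\Lambda^C$ the conclusion is vacuous or handled trivially since the required fractional part bound $A_\Lambda^C N^{-k}$ exceeds $1/2$; I would dispatch that edge case at the outset.) This gives us a candidate $(\ell, \xi_0)$ but $\xi_0$ need not be primitive: write $\xi_0 = m\xi$ with $\xi\in\Lambda^\ast$ primitive and $m\ge 1$, and set $q = \ell m$. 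Then $q\xi\cdot\alpha = \ell\xi_0\cdot\alpha$, so $\snorm{q\xi\cdot\alpha}_{\mb{R}/\mb{Z}} \ll A_\Lambda^C N^{-k}$ as needed; the bound $\snorm{\xi}_2 \le \snorm{\xi_0}_2 \ll \sqrt{d}+\sqrt{\log A_\Lambda}$ is automatic, and $q = \ell m \le \ell\snorm{\xi_0}_2 \ll A_\Lambda^C(\sqrt d + \sqrt{\log A_\Lambda}) \ll d A_\Lambda^{C'}$ after adjusting the constant, using $\log A_\Lambda \ll A_\Lambda$.

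The main obstacle, and the only place that requires genuine care, is the tail estimate $\sum_{\snorm{\xi}_2 > R}e^{-\pi\snorm{\xi}_2^2} \ll e^{-\pi R^2/2} A_\Lambda$ with the \emph{right} shape for $R$ — one wants the bound to depend on the lattice only through $A_\Lambda$ (and on $d$ mildly), with no hidden dependence on the shortest vector or the geometry of $\Lambda$. The clean way is the absorption trick above: $e^{-\pi\snorm{\xi}_2^2} \le e^{-\pi R^2/2}e^{-\pi\snorm{\xi}_2^2/2}$ for $\snorm{\xi}_2 > R$, and then $\sum_{\xi\in\Lambda^\ast}e^{-\pi\snorm{\xi}_2^2/2} = \Theta_{\Lambda^\ast}(1/2,\vec 0)$, which by the theta transformation formula equals $2^{d/2}\det(\Lambda^\ast)^{-1}\Theta_\Lambda(2,\vec 0)\det(\Lambda^\ast) = 2^{d/2}\Theta_{(\Lambda^\ast)^\ast}$-type expression comparable to $2^{d/2}A_\Lambda$ up to elementary manipulation; in any case it is $\ll 2^{O(d)}A_\Lambda$, and the factor $2^{O(d)}$ is absorbed by taking $R^2 \asymp d + \log A_\Lambda$ with a suitable implied constant. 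Everything else is a routine bookkeeping of implied constants, exactly mirroring \cite[Lemma~A.6]{GT09}, with Proposition~\ref{prop:weyl} supplying the higher-degree Weyl input in place of the quadratic case used there.
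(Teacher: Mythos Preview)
Your approach is essentially identical to the paper's: truncate the Poisson expansion at radius $R\asymp\sqrt{d}+\sqrt{\log A_\Lambda}$ via the absorption trick $e^{-\pi\snorm{\xi}_2^2}\le e^{-\pi R^2/2}e^{-\pi\snorm{\xi}_2^2/2}$ together with Poisson summation on $\Theta_{\Lambda^\ast}(1/2,\vec 0)$, pigeonhole to find a short $\xi_0\in\Lambda^\ast\setminus\{0\}$ with a large Weyl sum, and then invoke Proposition~\ref{prop:weyl}.

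There is one genuine gap in your primitivity reduction. You write $\xi_0=m\xi$ with $\xi$ primitive and assert $q=\ell m\le\ell\snorm{\xi_0}_2$, i.e.\ $m\le\snorm{\xi_0}_2$. But $m=\snorm{\xi_0}_2/\snorm{\xi}_2$, and for a general lattice nothing prevents $\snorm{\xi}_2$ from being much smaller than $1$, so $m$ could be enormous relative to $\snorm{\xi_0}_2$. The paper fixes this by first bounding the shortest nonzero vector of $\Lambda^\ast$ from below: if it has length $r$, its integer multiples alone contribute $\sum_{j\in\mb{Z}}e^{-\pi j^2r^2}\gg 1/r$ to $A_\Lambda$, so $r\gg A_\Lambda^{-1}$. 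Hence $\snorm{\xi}_2\gg A_\Lambda^{-1}$ and $m\ll\snorm{\xi_0}_2\,A_\Lambda\ll(\sqrt{d}+\sqrt{\log A_\Lambda})A_\Lambda$, giving $q\ll dA_\Lambda^{C+1}$ after adjusting the exponent. With this correction your argument goes through.
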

\begin{proof}
We claim that it suffices to prove that if $F_{\Lambda,\alpha}(N)\le 1/2$, then there exists $q\ll A_{\Lambda}^{C}$ and a vector \[\snorm{\xi}_2\ll \sqrt{d} + \sqrt{\log A_{\Lambda}}\text{ and } \snorm{q\xi \cdot \alpha}_{\mb{R}/\mb{Z}}\ll A_{\Lambda}^{C}N^{-k}.\]

Note that the shortest vector in $\Lambda^{\ast}$ is easily seen to be $\gg A_{\Lambda}^{-1}$ by considering the contribution to $A_{\Lambda}$ by scalar multiples of the shortest vector. Therefore $\xi$ is a multiple of $\wt{\xi}$ which is primitive, we have $\snorm{\xi}_2/\snorm{\wt{\xi}}_2\ll (\sqrt{d} + \sqrt{\log A_{\Lambda}})A_{\Lambda}$, and therefore $q$ can be replaced by $q' = q\snorm{\xi}_2/\snorm{\wt{\xi}}_2 \ll (\sqrt{d} + \sqrt{\log A_{\Lambda}})A_{\Lambda}\cdot A_{\Lambda}^{C}\ll d\cdot A_{\Lambda}^{C+1}.$ This gives the desired result. 

Let $M = 4(\sqrt{d} + \sqrt{\log A_{\Lambda}})$ and note that 
\begin{align*}
1/2&\le |F_{\Lambda,\alpha}(N) - 1| = \bigg|\sum_{\xi\in \Lambda^{\ast}\setminus\{0\}}e^{-\pi \snorm{\xi}_2^2}\mb{E}_{|n|\le N}e(n^k\xi\cdot \alpha)\bigg|\\
&\le \sum_{\substack{\xi\in \Lambda^{\ast}\\ \snorm{\xi}_2\ge M}}e^{-\pi \snorm{\xi}_2^2} + \bigg(\sup_{\substack{\xi\in \Lambda^{\ast}\\ \snorm{\xi}_2\le M}}\Big|\mb{E}_{|n|\le N}e(n^k\xi\cdot \alpha)\Big|\bigg)\cdot \sum_{\xi\in \Lambda^{\ast}}e^{-\pi \snorm{\xi}_2^2}\\
&\le e^{-\pi M^2/2}\sum_{\xi\in \Lambda^{\ast}}e^{-\pi \snorm{\xi}_2^2/2} + A_{\Lambda}\sup_{\substack{\xi\in \Lambda^{\ast}\\ \snorm{\xi}_2\le M}}\Big|\mb{E}_{|n|\le N}e(n^k\xi\cdot \alpha)\Big|\\
&= e^{-\pi M^2/2}2^{d/2}\det(\Lambda)\sum_{m\in \Lambda}e^{-2\pi \snorm{m}_2^2} + A_{\Lambda}\sup_{\substack{\xi\in \Lambda^{\ast}\\ \snorm{\xi}_2\le M}}\Big|\mb{E}_{|n|\le N}e(n^k\xi\cdot \alpha)\Big|\\
&\le 1/4 + A_{\Lambda}\sup_{\substack{\xi\in \Lambda^{\ast}\\ \snorm{\xi}_2\le M}}\Big|\mb{E}_{|n|\le N}e(n^k\xi\cdot \alpha)\Big|
\end{align*}
where the equality follows from Poisson summation. We therefore have that 
\[\Big|\mb{E}_{|n|\le N}e(n^k\xi\cdot \alpha)\Big|\ge 1/(4A_{\Lambda})\]
and the result follows from \cref{prop:weyl} as desired. 
\end{proof}

The following appears as \cite[Lemma~A.7]{GT09}.
\begin{lemma}\label{lem:descent}
Let $\Lambda'\subseteq\mb{R}^{d-1}$ and $\Lambda\subseteq\mb{R}^d$ be full-rank lattices with $\Lambda'\subseteq \Lambda$, embedding $\mb{R}^{d-1}$ in $\mb{R}^d$ by putting $0$ in the final coordinate. Suppose that $\alpha\in\mb{R}^d$ and $\alpha'\in\mb{R}^{d-1}$ satisfy $\alpha-\alpha'\in \Lambda$. Then 
\[F_{\Lambda,\alpha}(N)\ge \frac{\det(\Lambda)}{\det(\Lambda')}F_{\Lambda',\alpha'}(N).\]
\end{lemma}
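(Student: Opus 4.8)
The plan is to argue directly with the primal (theta) definition $F_{\Lambda,\alpha}(N)=\det(\Lambda)\,\mb{E}_{|n|\le N}\Theta_\Lambda(1,n^k\alpha)$, since both hypotheses ($\Lambda'\subseteq\Lambda$ and $\alpha-\alpha'\in\Lambda$) are assertions about the primal lattice rather than its dual, and the Fourier side would only obscure them.

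First I would use periodicity of the theta function to replace $\alpha$ by $\alpha'$. For any $m\in\Lambda$ one has $\Theta_\Lambda(1,x+m)=\Theta_\Lambda(1,x)$ by reindexing the sum over $\Lambda$. Since $n^k\in\mb{Z}$ and $\alpha-\alpha'\in\Lambda$ (viewing $\alpha'\in\mb{R}^{d-1}\subseteq\mb{R}^d$), we get $n^k\alpha-n^k\alpha'=n^k(\alpha-\alpha')\in\Lambda$ for every $n$, hence $\Theta_\Lambda(1,n^k\alpha)=\Theta_\Lambda(1,n^k\alpha')$, and therefore $F_{\Lambda,\alpha}(N)=\det(\Lambda)\,\mb{E}_{|n|\le N}\Theta_\Lambda(1,n^k\alpha')$.

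Next I would compare $\Theta_\Lambda(1,y)$ with $\Theta_{\Lambda'}(1,y)$ for $y\in\mb{R}^{d-1}\subseteq\mb{R}^d$. Each summand $e^{-\pi\snorm{y-m}_2^2}$ is nonnegative, and for $m\in\Lambda'$ the quantity $\snorm{y-m}_2$ is the same whether computed in $\mb{R}^{d-1}$ or in $\mb{R}^d$, since the last coordinate of both $y$ and $m$ vanishes. As $\Lambda'\subseteq\Lambda$, the sum defining $\Theta_\Lambda(1,y)$ contains every term of $\Theta_{\Lambda'}(1,y)$ together with additional nonnegative terms, so $\Theta_\Lambda(1,y)\ge\Theta_{\Lambda'}(1,y)$. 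Applying this with $y=n^k\alpha'$ and averaging over $|n|\le N$ gives
\[F_{\Lambda,\alpha}(N)=\det(\Lambda)\,\mb{E}_{|n|\le N}\Theta_\Lambda(1,n^k\alpha')\ge\det(\Lambda)\,\mb{E}_{|n|\le N}\Theta_{\Lambda'}(1,n^k\alpha')=\frac{\det(\Lambda)}{\det(\Lambda')}F_{\Lambda',\alpha'}(N),\]
which is exactly the asserted inequality. There is essentially no obstacle here; the only points requiring care are that the replacement of $\alpha$ by $\alpha'$ genuinely uses $n^k\in\mb{Z}$, and that the Euclidean norm on the embedded subspace $\mb{R}^{d-1}$ agrees with the ambient norm on $\mb{R}^d$, so that $\Theta_{\Lambda'}(1,\cdot)$ behaves as expected once its argument is regarded inside $\mb{R}^d$.
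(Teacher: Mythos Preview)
Your proof is correct and is exactly the standard argument (the paper itself does not give a proof, merely citing \cite[Lemma~A.7]{GT09}, whose proof proceeds precisely via periodicity of $\Theta_\Lambda$ and termwise positivity to drop from $\Lambda$ to the sublattice $\Lambda'$). There is nothing to add.
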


We now combine \cref{lem:scmidt-alter} and \cref{lem:descent} exactly as in \cite[Proposition~A.8]{GT09}.
\begin{lemma}\label{lem:lower-bound}
There exists $C = C(k)>0$ such that the following holds. Suppose $\alpha\in \mb{R}^d$ and $\Lambda\subset \mb{R}^d$ is full rank. If $N>(dA_{\Lambda})^C$ and $F_{\Lambda,\alpha}(N)\le 1/2$, then there exist $\Lambda'\subseteq \mb{R}^{d-1}$, $\alpha'\in \mb{R}^{d-1}$, and $N'\gg N(dA_{\Lambda})^{-C}$ such that
\begin{align*}
A_{\Lambda'} &\ll (\sqrt{d} + \sqrt{\log A_{\Lambda}})A_{\Lambda},\\
F_{\Lambda,\alpha}(N)&\ge(dA_{\Lambda})^{-C}F_{\Lambda',\alpha'}(N^{\ast}).
\end{align*}
\end{lemma}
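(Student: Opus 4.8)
The plan is to run the reduction exactly as in \cite[Proposition~A.8]{GT09}, chaining the three items of \cref{lem:stability}, the Schmidt alternative \cref{lem:scmidt-alter}, and the descent estimate \cref{lem:descent}, and then accounting for all the multiplicative losses. Throughout, the constant $C$ in the hypothesis $N>(dA_\Lambda)^C$ is taken large in terms of the constant supplied by \cref{lem:scmidt-alter} (and hence of \cref{prop:weyl}).

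First I would feed $\Lambda,\alpha,N$ into \cref{lem:scmidt-alter}; since $N$ is large the first alternative $F_{\Lambda,\alpha}(N)\ge1/2$ is excluded by hypothesis, so we obtain a positive integer $q\ll dA_\Lambda^{C}$ and a primitive $\xi\in\Lambda^\ast\setminus\{0\}$ with $\snorm{\xi}_2\ll\sqrt d+\sqrt{\log A_\Lambda}$ and $\snorm{q\,\xi\cdot\alpha}_{\mb{R}/\mb{Z}}\ll A_\Lambda^{C}N^{-k}$. Applying the dilation item of \cref{lem:stability} with this $q$ replaces $(\alpha,N)$ by $(q^k\alpha,N/q)$ at the cost of a factor $\gg1/q$; writing $\beta:=q^k\alpha$ and $N_1:=N/q$, the estimate on $\snorm{q\,\xi\cdot\alpha}_{\mb{R}/\mb{Z}}$ now says that $\xi\cdot\beta$ lies within a controlled multiple of $N_1^{-k}$ of an integer, the $k$-th power having absorbed the denominator $q$. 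Next, translating $\beta$ by a suitable integer multiple of a vector $w\in\Lambda$ with $\xi\cdot w=1$ (which exists by primitivity of $\xi$ and is invisible to $F_{\Lambda,\cdot}$, since translation by a lattice vector does not change $\Theta_\Lambda$), and then invoking the stability item of \cref{lem:stability} to remove the small error perpendicular to $\xi^\perp$, I would obtain $\wt\beta$ with $\xi\cdot\wt\beta\in\mb{Z}$ exactly, at the cost of replacing $\Lambda$ by $(1+\eps)\Lambda$ for a small $\eps$. Finally I would apply \cref{lem:descent}: after rotating $\xi^\perp$ onto $\mb{R}^{d-1}$ (which preserves $\Theta$, $A_\Lambda$, and $F$), taking $\Lambda':=\Lambda\cap\xi^\perp$ (full rank in $\mb{R}^{d-1}$, again by primitivity of $\xi$), and choosing $\alpha'\in\xi^\perp$ with $\wt\beta-\alpha'\in\Lambda$ (possible as $\xi\cdot\wt\beta\in\mb{Z}$), one gets $F_{\Lambda,\beta}(N_1)\gg(\det\Lambda/\det\Lambda')\,F_{\Lambda',\alpha'}(N_1)$. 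The elementary identities $\det\Lambda/\det\Lambda'=1/\snorm{\xi}_2$ and $A_{\Lambda'}=\det(\Lambda')\sum_{m\in\Lambda'}e^{-\pi\snorm{m}_2^2}\le\snorm{\xi}_2A_\Lambda$ (using $\Lambda'\subseteq\Lambda$) then give $A_{\Lambda'}\ll(\sqrt d+\sqrt{\log A_\Lambda})A_\Lambda$, and composing all the accumulated factors together with $N^\ast\gg N(dA_\Lambda)^{-C}$ (the length having shrunk only by bounded powers of $q$) yields $F_{\Lambda,\alpha}(N)\ge(dA_\Lambda)^{-C}F_{\Lambda',\alpha'}(N^\ast)$.

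The genuinely delicate step, and the main obstacle, is the parameter bookkeeping around the dilation and stability: one must choose $C$ large enough that the perturbation required to snap $\xi\cdot\wt\beta$ onto $\mb{Z}$ is admissible for the stability item of \cref{lem:stability} (possibly after an auxiliary dilation), and that each loss incurred---the factor $1/q$, the factor $1/\snorm{\xi}_2$, and the constant and lattice-dilation cost from stability---is at worst $(dA_\Lambda)^{O_k(1)}$. All of this is the computation of \cite[Appendix~A]{GT09}, which transfers verbatim to arbitrary degree $k$.
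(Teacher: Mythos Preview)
Your proposal is correct and follows essentially the same route as the paper: invoke \cref{lem:scmidt-alter}, chain the three items of \cref{lem:stability} to pass to a slightly dilated lattice and a vector lying exactly in $\xi^\perp$, then apply \cref{lem:descent} and read off the determinant ratio $1/\snorm{\xi}_2$ and the bound $A_{\Lambda'}\ll\snorm{\xi}_2A_\Lambda$. The differences are cosmetic---the order in which you apply contraction versus dilation, your explicit lattice-translation step (harmless since $n^k c w\in\Lambda$ for integer $c$, so $\Theta_\Lambda$ is unchanged), and your taking $\Lambda'=\Lambda\cap\xi^\perp$ rather than the paper's projection $\pi(\Lambda)$---and if anything your choice of $\Lambda'$ satisfies the stated hypothesis $\Lambda'\subseteq\Lambda$ of \cref{lem:descent} more transparently.
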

\begin{proof}
By \cref{lem:scmidt-alter} we have that there exist primitive $\xi\in \Lambda^{\ast}\setminus\{0\}$ and $q\ll dA_{\Lambda}^{C}$ such that 
\[\snorm{\xi}_2\ll \sqrt{d} + \sqrt{\log A_{\Lambda}}\text{ and } \snorm{q\xi \cdot \alpha}_{\mb{R}/\mb{Z}}\ll A_{\Lambda}^{C}N^{-k}.\]

By applying a rotation to both $\alpha$ and $\Lambda$, we may assume that $\xi = \xi_de_d$. Note that $|\xi_d|\gg A_{\Lambda}^{-1}$ and $\snorm{\xi\cdot q^k\alpha}_{\mb{R}/\mb{Z}}\ll d A_{\Lambda}^{O(1)}N^{-k}$. Thus there exists $\beta\in \mb{R}^d$ such that $\beta\cdot \xi \in \mb{Z}$ and 
\[\snorm{\beta-q^k\alpha}_{2}\le |\xi_d|^{-1}\snorm{\xi\cdot q^k\alpha}_{\mb{R}/\mb{Z}}\ll d A_{\Lambda}^{O(1)}N^{-k}.\]

We take $N_{\ast} \gg  (dA_{\Lambda})^{-O(1)}N$ such that $\snorm{\beta-q^k\alpha}_{2}\le N_{\ast}^{-k}/d$. We have that 
\begin{align*}
F_{\Lambda,\alpha}(N)&\gg (dA_{\Lambda})^{-O(1)}F_{\Lambda,\alpha}(N^{\ast})\gg (dA_{\Lambda})^{-O(1)}F_{\Lambda,q^k\alpha}(N^{\ast}/q)\\
&\gg (dA_{\Lambda})^{-O(1)}F_{(1+1/d)\Lambda,(1+1/d)\beta}(N^{\ast}/q);
\end{align*}
where we have applied the first, second, and then third item of \cref{lem:stability}. Let $\pi\colon(x_1,\ldots,x_d)\to(x_1,\ldots,x_{d-1})$ and take $\alpha' = (1+1/d)\pi(\beta)$, $\Lambda' = \pi((1+1/d)\Lambda)$, and $N' = N^{\ast}/q$. (That $\Lambda'$ is a lattice uses that $\xi$ is primitive.) Finally by \cref{lem:descent},
\[F_{(1+1/d)\Lambda,(1+1/d)\beta}(N^{\ast}/q)\ge \frac{\det((1+1/d)\Lambda)}{\det(\Lambda')}\cdot F_{\Lambda',\alpha'}(N') = ((1+1/d)\xi_d)F_{\Lambda',\alpha'}(N')\]
and using the lower bound on $|\xi_d|$ we have the desired lower bound on $F_{\Lambda,\alpha}(N)$. For the upper bound of $A_{\Lambda'}$, by positivity of the exponential function we have 
\[\frac{A_{\pi(\Lambda)}}{\det(\pi(\Lambda))}\le \frac{A_{\Lambda}}{\det(\Lambda)}\]
and therefore 
\[A_{\Lambda'}\le (1+1/d)^{d}A_{\pi(\Lambda)}\le eA_{\Lambda}\frac{{\det(\pi(\Lambda))}}{\det(\Lambda)} = eA_{\Lambda}\cdot \snorm{\xi}_2\]
as desired.
\end{proof}

Iterating \cref{lem:lower-bound} exactly as is done in \cite[Proposition~A.9]{GT09} (see arXiv version for a corrected statement) yields the following.
\begin{proposition}\label{prop:lower-bound}
There exists $C = C(k)>0$ such that the following holds. Let $\Lambda\subseteq \mb{R}^d$ be a lattice of full rank with $\det(\Lambda)\ge 1$. Then for any integer $N$ we have
\[F_{\Lambda,\alpha}(N)\gg d^{-Cd^2}A_{\Lambda}^{-Cd}.\]
\end{proposition}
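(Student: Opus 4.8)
The plan is to deduce \cref{prop:lower-bound} by iterating \cref{lem:lower-bound}, running an induction on the dimension $d$; this is the verbatim analogue of \cite[Proposition~A.9]{GT09} with $n^2$ replaced by $n^k$.

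I would first dispose of the base cases. If $d=0$ the dual lattice $\Lambda^\ast$ is trivial, so $F_{\Lambda,\alpha}(N)=1$ identically. For general $d$, if $F_{\Lambda,\alpha}(N)\ge 1/2$ there is nothing to prove, and if $N$ is small --- say $N\le (dA_\Lambda)^{C_0}$, where $C_0=C_0(k)$ is the constant of \cref{lem:lower-bound} --- then retaining only the $n=0$ summand in $F_{\Lambda,\alpha}(N)=\det(\Lambda)\,\mb{E}_{|n|\le N}\Theta_\Lambda(1,n^k\alpha)$ and using $\det(\Lambda)\Theta_\Lambda(1,\vec{0})=A_\Lambda\ge 1$ gives $F_{\Lambda,\alpha}(N)\ge 1/(2\lfloor N\rfloor+1)\gg (dA_\Lambda)^{-C_0}$, which is acceptable for $d\ge 1$. (Note this form of the base case uses only $A_\Lambda\ge 1$, so it works along the recursion even if the hypothesis $\det\ge 1$ is lost for intermediate lattices.)

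Thus I may assume $d\ge 1$, $F_{\Lambda,\alpha}(N)\le 1/2$, and $N>(dA_\Lambda)^{C_0}$, which are exactly the hypotheses of \cref{lem:lower-bound}. Applying it yields a full-rank lattice $\Lambda'\subseteq\mb{R}^{d-1}$, a vector $\alpha'\in\mb{R}^{d-1}$, and a scale $N'\gg N(dA_\Lambda)^{-C_0}$ with
\[A_{\Lambda'}\ll(\sqrt d+\sqrt{\log A_\Lambda})\,A_\Lambda\qquad\text{and}\qquad F_{\Lambda,\alpha}(N)\ge(dA_\Lambda)^{-C_0}\,F_{\Lambda',\alpha'}(N').\]
I would then invoke the inductive hypothesis on the $(d-1)$-dimensional quantity $F_{\Lambda',\alpha'}(N')$ and compose the two estimates. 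Unwinding the recursion, $F_{\Lambda,\alpha}(N)$ is bounded below by a product of at most $d$ factors of the form $\big(i\,A_{\Lambda^{(i)}}\big)^{-C_0}$ for $1\le i\le d$, where $\Lambda=\Lambda^{(d)},\Lambda^{(d-1)},\dots$ is the chain of descending-dimension lattices produced along the way (the chain bottoms out at a dimension-$0$, large-$F$, or small-$N$ base case, which only contributes a harmless extra factor).

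The one point requiring genuine care --- and the main obstacle --- is controlling the theta-constants $A_{\Lambda^{(i)}}$ along this chain, since each descent inflates $A$ by the factor $\sqrt d+\sqrt{\log A}$, and this compounds over the up to $d$ levels. The saving grace is that the inflation factor is only \emph{polylogarithmic} in $A$: writing $a_i=\log A_{\Lambda^{(i)}}$, one gets $a_{i-1}\le a_i+O(\log d+\log a_i)$, and a routine bootstrap then produces the uniform bound $a_i=O(\log A_\Lambda+d\log d)$, i.e.\ $A_{\Lambda^{(i)}}\le(dA_\Lambda)^{O(d)}$ all along the chain. Substituting this uniform estimate into the telescoped product of the at most $d$ factors yields a lower bound of the form $(dA_\Lambda)^{-O_k(d)}$ as asserted in \cref{prop:lower-bound}, which closes the induction. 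All of this bookkeeping --- the applications of \cref{lem:stability}, the descent via \cref{lem:descent} inside \cref{lem:lower-bound}, and the growth control --- is carried out exactly as in \cite[Appendix~A]{GT09}.
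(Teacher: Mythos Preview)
Your proposal is correct and follows exactly the approach the paper indicates (``Iterating \cref{lem:lower-bound} exactly as is done in \cite[Proposition~A.9]{GT09}''): descend in dimension via \cref{lem:lower-bound}, handle the base cases using the $n=0$ contribution and $A_\Lambda\ge 1$, and control the growth of $A_{\Lambda^{(i)}}$ along the chain by the polylogarithmic recursion $a_{i-1}\le a_i+O(\log(d+a_i))$. This is precisely the content of \cite[Proposition~A.9]{GT09}, and your bookkeeping of the bootstrap $a_i=O(\log A_\Lambda+d\log d)$ is the right way to close it.
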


We now prove \cref{prop:schmidt}; our deduction once again is identical to \cite[Proposition~A.2]{GT09}.
\begin{proof}[Proof of \cref{prop:schmidt}]
Let $R$ be a quantity to be chosen later; $R\gg d^3$ throughout. Let $\Lambda := R\mb{Z}^d$ and $A_{\Lambda} = R^d(\sum_{m\in R\mb{Z}}e^{-\pi m^2})^d\le 2^{O(d)}R^d$. This along with \cref{prop:lower-bound} implies that 
\[F_{\Lambda,\alpha}(N)\gg R^{-O(d^2)}\]
or
\[\mb{E}_{|n|\le N}\sum_{m\in R\mb{Z}^d}e^{-\pi\snorm{n^k\alpha - m}_2^2}\gg R^{-O(d^2)}.\]
For $N\gg (2R)^{\Omega(d^2)}$, there is $n\in \{1,\ldots N\}$ such that 
\[\sum_{m\in R\mb{Z}^d}e^{-\pi\snorm{n^k\alpha - m}_2^2}\gg R^{-O(d^2)}.\]
If $\snorm{n^k\alpha - m}_2\ge \sqrt{R}$ for all $m\in R\mb{Z}^d$, we have 
\begin{align*}
\sum_{m\in R\mb{Z}^d}e^{-\pi\snorm{n^k\alpha - m}_2^2}&\le e^{-\pi R/2}\sum_{m\in R\mb{Z}^d}e^{-\pi\snorm{n^k\alpha - m}_2^2/2}\\
&= e^{-\pi R/2} \frac{2^{d/2}}{\det(\Lambda)}\sum_{\xi\in \Lambda^{\ast}}e^{-2\pi \snorm{\xi}_2^2}\exp(\xi\cdot n^k\alpha)\\
&\le e^{-\pi R/2} \frac{2^{d/2}A_{\Lambda}}{\det(\Lambda)} \ll e^{-\pi R/2} 2^{O(d)}.
\end{align*}
This is a contradiction; thus if $N\ge (2R)^{\Omega(d^2)}$ and $R\gg d^3$ then there exists $1\le n\le N$ such that 
\[\snorm{n^k\alpha_j}_{\mb{R}/\mb{Z}}\le 1/\sqrt{R}\]
for all $j=1,\ldots, d$. The result follows upon taking $R = N^{c/d^2}$ for an appropriately small constant $c$ (if $N$ is small enough that $R\gg d^3$ fails to hold, the result is trivial).
\end{proof}

\section{Constructing an periodic nilsequence with integral structure constants}\label{sec:periodic}
For technical reasons, it will be advantageous to construct nilsequences where the underlying nilmanifold has structure constants which are integral (and in fact divisible by a sufficient fixed integer). This follows via a straightforward lifting procedure where one replaces the underlying Mal'cev basis $\{e_1,\ldots,e_{\dim(G)}\}$ for $\Gamma$ with $\{e_1^{L},\ldots,e_{\dim(G)}^{L}\}$ for a sufficiently large constant $L$. This is primarily to avoid needing various fractional part identities in the proof of \cref{prop:3-step-conversion}.

\begin{lemma}\label{lem:nil-integral}
Fix an integer $K\ge 1$. Suppose we are given a degree $s$ nilmanifold $G/\Gamma$ with dimension $d$ and complexity $M$ and $F\colon G/\Gamma\to \mb{C}$ with Lipschitz norm bounded by $L$ (with respect to the metric $d_{G/\Gamma}$).

There exist $\wt{\Gamma},\wt{F}$ such that $\wt{F}\colon G/\wt{\Gamma}\to\mb{C}$ such that for any $g\in G$ we have  
\[F(g\Gamma) = \wt{F}(g\wt{\Gamma}).\] 
Furthermore we have that $\wt{\Gamma}$ has a Mal'cev basis $\wt{X}$ with all Lie bracket structure constants (the values $c_{ijk}$ in \cref{def:nilmanifold}) being integral and divisible by $K$, that $G/\wt{\Gamma}$ (with $\wt{X}$) has complexity bounded by $O_s(K)\cdot\exp(O(M))$, and that $F$ has Lipschitz norm bounded by $L\cdot (K\cdot \exp(O(M)))^{O_{s}(d^{O_s(1)})}$.
\end{lemma}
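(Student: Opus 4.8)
The plan is to pass to a finite-index sublattice obtained by uniformly dilating the Mal'cev basis. Write the given Mal'cev basis of $\log G$ as $\mc{X}=\{X_1,\ldots,X_d\}$ and the structure constants in lowest terms as $c_{ijk}=p_{ijk}/q_{ijk}$, so that $1\le q_{ijk}\le M$. Let $D$ be the least common multiple of the (at most $d^3$) denominators $q_{ijk}$; since each $q_{ijk}$ lies in $\{1,\ldots,M\}$ we have $D\mid\mathrm{lcm}(1,\ldots,M)$, whence $D\le\exp(O(M))$ by Chebyshev's estimate $\mathrm{lcm}(1,\ldots,M)=\exp(O(M))$. Let $C_s$ be the constant of \cref{lem:structure-constant} (so $C_3=12$), set $L:=K\,C_s\,D$, and define $\wt X_i:=LX_i$, $\wt{\mc X}:=\{\wt X_1,\ldots,\wt X_d\}$, and $\wt\Gamma:=\{\exp(t_1\wt X_1)\cdots\exp(t_d\wt X_d):t_i\in\mb{Z}\}$. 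Since $\exp(\wt X_i)=\exp(X_i)^L\in\Gamma$, we have $\wt\Gamma\subseteq\Gamma$.

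First I would verify that $(G,G_\bullet,\wt\Gamma,\wt{\mc X})$ is a filtered nilmanifold of degree $s$ in the sense of \cref{def:nilmanifold}. The rescaled structure constants are $\wt c_{ijk}=Lc_{ijk}=K\,C_s\,(D/q_{ijk})\,p_{ijk}$, which are integers divisible by $K$ (indeed by $K\,C_s$) and of absolute value at most $LM\le O_s(K)\exp(O(M))$; the inclusions $[\log G_i,\log G_j]\subseteq\log G_{i+j}$ are unchanged, and $\wt{\mc X}$ is adapted to $G_\bullet$ exactly as $\mc X$ was, since dilation does not change which subspaces the basis vectors span. The one point needing care is condition~(3): that $\wt\Gamma$ — which we defined as a set of ordered products — is actually a subgroup of $G$ (equivalently $\wt\Gamma=\psi_{\exp,\wt{\mc X}}^{-1}(\mb{Z}^d)$). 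This is the standard rescaling-of-lattice fact: expanding products via the Baker--Campbell--Hausdorff formula and inducting along $G_\bullet$, all coordinates stay integral, using that $\mathrm{span}_{\mb{Z}}(\wt{\mc X})$ is closed under the Lie bracket (integrality of the $\wt c_{ijk}$) and that $L$ is divisible by enough to clear the BCH denominators through degree $s$ — which is exactly what divisibility by $C_s$ provides, and is the same computation that underlies \cref{lem:structure-constant}. Consequently $\wt\Gamma$ is discrete and cocompact, and $G/\wt\Gamma$ (with $\wt{\mc X}$) has complexity at most $O_s(K)\exp(O(M))$.

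Since $F$ is $\Gamma$-invariant and $\wt\Gamma\subseteq\Gamma$, it descends to a smooth function $\wt F\colon G/\wt\Gamma\to\mb{C}$ with $\wt F(g\wt\Gamma)=F(g\Gamma)$ for all $g\in G$, which is the asserted identity. For the Lipschitz bound I would compare the metrics directly. If $g=\exp(u_1X_1)\cdots\exp(u_dX_d)$ then $g=\exp((u_1/L)\wt X_1)\cdots\exp((u_d/L)\wt X_d)$, so the second-kind coordinate maps satisfy $\psi_{\wt{\mc X}}=\tfrac{1}{L}\psi_{\mc X}$; inserting this into the definition of the word metric on $G$ (the chains being identical) gives $d_{\wt{\mc X}}=\tfrac{1}{L}d_{\mc X}$, and hence, using $\wt\Gamma\subseteq\Gamma$,
\[d_{G/\wt\Gamma}(x\wt\Gamma,y\wt\Gamma)=\tfrac{1}{L}\inf_{\gamma,\gamma'\in\wt\Gamma}d_{\mc X}(x\gamma,y\gamma') \ge \tfrac{1}{L}\,d_{G/\Gamma}(x\Gamma,y\Gamma).\]
Therefore $|\wt F(x\wt\Gamma)-\wt F(y\wt\Gamma)|=|F(x\Gamma)-F(y\Gamma)|\le\snorm{F}_{\mr{Lip}}\,d_{G/\Gamma}(x\Gamma,y\Gamma)\le L\snorm{F}_{\mr{Lip}}\,d_{G/\wt\Gamma}(x\wt\Gamma,y\wt\Gamma)$, so $\snorm{\wt F}_{\mr{Lip}}\le\snorm{F}_{\infty}+L\snorm{F}_{\mr{Lip}}\le(L+1)\snorm{F}_{\mr{Lip}}\le\snorm{F}_{\mr{Lip}}\cdot O_s(K)\exp(O(M))$, which is in fact somewhat stronger than the stated bound.

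The main obstacle is the bookkeeping in the first step: checking cleanly that the dilated basis still defines a filtered nilmanifold in the precise sense of \cref{def:nilmanifold} — above all that condition~(3) survives, which is exactly what forces the divisibility of $L$ by $C_sD$. Once that is in place, the complexity count, the descent of $F$, and the metric comparison are routine, and $d_{G/\wt\Gamma}$ is then a genuine (nondegenerate) metric precisely because $\wt{\mc X}$ is a bona fide Mal'cev basis.
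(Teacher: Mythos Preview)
Your construction is identical to the paper's: rescale the Mal'cev basis by $R=K\cdot C_s\cdot\mathrm{lcm}(1,\ldots,M)$ (you call it $L$, which clashes with the Lipschitz constant in the statement --- rename it), take $\wt\Gamma$ to be the subgroup generated by the $\exp(RX_i)$, and verify via Baker--Campbell--Hausdorff that this really is the integer lattice in second-kind coordinates for $\wt{\mc X}$. The complexity and divisibility claims go through exactly as you say.

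Your Lipschitz argument is in fact cleaner than the paper's. The paper bounds the diameter of $G/\wt\Gamma$, picks representatives $\wt x,\wt y$ realizing $d_{G/\wt\Gamma}$, and then argues by contradiction (using approximate left-invariance and comparability of the metric with $\ell^\infty$ in coordinates) that for sufficiently close points one has $d_{G/\Gamma}(\wt x\Gamma,\wt y\Gamma)=d_{G,\mc X}(\wt x,\wt y)$. Your one-line observation
\[d_{G/\wt\Gamma}(x\wt\Gamma,y\wt\Gamma)=\tfrac1R\inf_{\gamma,\gamma'\in\wt\Gamma}d_{\mc X}(x\gamma,y\gamma')\ge\tfrac1R\,d_{G/\Gamma}(x\Gamma,y\Gamma),\]
using only $\wt\Gamma\subseteq\Gamma$ and the exact scaling $d_{G,\wt{\mc X}}=\tfrac1R\,d_{G,\mc X}$, bypasses all of that and gives the same (indeed slightly better) bound $\snorm{\wt F}_{\mr{Lip}}\le(R+1)\snorm{F}_{\mr{Lip}}$. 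Both routes arrive at a bound linear in $R$, stronger than what the lemma states; your shortcut is the preferable write-up.
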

\begin{proof}
Let $\mc{X} = \{X_1,\ldots,X_d\}$ denote the Mal'cev basis for the Lie algebra $\mf{g}$ implicit in the complexity bound given for $G/\Gamma$. In order for $\mc{X}$ to be a Mal'cev basis we have:
\begin{itemize}
    \item For each $j = 0,\ldots,d$, the subspace $\mf{h}_j := \on{span}(X_{j+1},\ldots, X_d)$ is a Lie algebra ideal in $\mf{g}$, and hence $H_j := \exp(\mf{h}_j)$ is a normal Lie subgroup of $G$;
    \item If $d_i = \dim(G_i)$ then $G_i = H_{d-d_i}$;
    \item Each $g\in G$ may be written uniquely as $\exp(t_1X_1)\cdots\exp(t_mX_m)$ for $t_i\in \mb{R}$;
    \item $\Gamma$ are exactly the elements which can be written in the form $\exp(t_1X_1)\cdots\exp(t_mX_m)$ with $t_i\in \mb{Z}$ (and is a discrete cocompact subgroup).
\end{itemize}
We also require that the Lie algebra relations are appropriately filtered, corresponding to the containments $[G_i,G_j]\subseteq G_{i+j}$.

We take $\wt{\mc{X}} = \{R\cdot X_1,\ldots,R\cdot X_d\} =: \{\wt{X_1},\ldots,\wt{X_d}\}$ for a large positive integer $R$ to be chosen later. We take $\wt{\Gamma} = \sang{\exp(RX_i)}$ and claim that $\wt{\mc{X}}$ is a valid Mal'cev basis for $G,\wt{\Gamma}$. All conditions for verifying a Mal'cev basis are trivial for us except for the fourth bullet point. The key point is verifying that every element of the subgroup generated by elements $\exp(RX_i)$ can be presented in the desired form.

We take $R = C_s \cdot \on{lcm}(1,\ldots,M)\cdot K$ for some appropriate positive integer $C_s$. Note that 
\[[\wt{X}_i,\wt{X}_j] = \sum_{i,j<k\le m}c_{ijk}\cdot R^2X_k = \sum_{i,j<k\le m}(Rc_{ijk})\wt{X_k}.\]
The crucial point here is that $Rc_{ijk}\in C_s\mb{Z}$ by construction. Let $e_i = \exp(X_i)$ and suppose we have a word in $\wt{\Gamma}$ given by
\[w = e_{i_1}^{\pm R}e_{i_2}^{\pm R}\cdots e_{i_t}^{\pm R}\]
where each $\pm$ denotes an appropriate sign. Note $e_i^R=\exp(\wt{X}_i)$. We first use the Baker--Campbell--Hausdorff formula to write
\[w=\exp\bigg(\sum_{j=1}^dw_j\wt{X}_j\bigg).\]
Note that $w_j\in\mb{Z}$ since the Lie bracket structure constants $Rc_{ijk}$ are in $C_s\mb{Z}$, and Baker--Campbell--Hausdorff only goes to finite height due to the bounded step of the nilpotent Lie group. Now we iteratively pull out a single Mal'cev basis element ``one at a time''. We have
\[\exp(-w_1\wt{X}_1)w=\exp(-w_1\wt{X}_1)\exp\bigg(\sum_{j=1}^dw_j\wt{X}_j\bigg)=\exp\bigg(\sum_{j=2}^dw_j'\wt{X}_j\bigg)\]
for some $w_j'\in\mb{Z}$ using Baker--Campbell--Hausdorff again. Note that we have eliminated use of $\wt{X}_1$ in the right side (which uses the observation that $c_{ijk}=0$ if $k\le\max(i,j)$). We iterate this, obtaining
\[\bigg(\prod_{j=d}^1\exp(-w_j^\ast\wt{X}_j)\bigg)w=\exp(0)=\mr{id}_G\]
for appropriate $w_j^\ast\in\mb{Z}$ (note that the product has decreasing indices left-to-right). Solving for $w$ gives the result. This completes the proof that $\wt{\mc{X}}$ is a valid Mal'cev basis.

Now we define $\wt{F}$ and verify various claims regarding bounds. Note that $G/\wt{\Gamma}$ clearly has complexity bounded by $O(RM)$ which is as desired. Since $\wt{\Gamma}\subseteq \Gamma$, we may lift $F$ from $G/\Gamma$ to $\wt{F}$ on $G/\wt{\Gamma}$ in the obvious manner, composing with a quotient. It suffices to verify that this does not ruin the Lipschitz norm. Let $M' = \exp(O(M))\cdot O_s(K)$; we have that the diameter of $G/\wt{\Gamma}$ is bounded by $(M')^{O_s(d^{O_s(1)})}$ (cf.~\cite[Lemma~A.16]{GT12} with explicit dimension dependence \cite[Lemma~B.8]{Len23b}). Since $\snorm{\wt{F}}_\infty=\snorm{F}_\infty\le\snorm{F}_{\mr{Lip}}\le L$, it suffices to consider points which are within $d_{G/\wt{\Gamma}}$ distance $\eps$ when verifying the Lipschitz bound for $\wt{F}$ as long as we are willing to lose a factor of $\eps^{-1}$.

Note that $d_{G,\wt{\mc{X}}}(x,y)=(1/R)d_{G,\mc{X}}(x,y)$ since Mal'cev coordinates of the second kind in $\mc{X},\wt{\mc{X}}$ differ by a factor of $R$. Now consider $x,y\in G/\wt{\Gamma}$ such that $d_{G/\wt{\Gamma}}(x,y)\le(M')^{-O_s(d^{O_s(1)})}$. There exist representatives of $\wt{x}$ and $\wt{y}$ for $x$ and $y$ with respect to $\wt{\Gamma}$ such that 
\[d_{G/\wt{\Gamma}}(x,y) = d_{G,\wt{\mc{X}}}(\wt{x},\wt{y})\text{ and }d_{G,\wt{\mc{X}}}(\wt{x},\mr{Id}_G) + d_{G,\wt{X}}(\wt{y},\mr{Id}_G)\le(M')^{O_s(d^{O_s(1)})}.\]
Additionally, we claim that since $x,y$ are sufficiently close in $G/\wt{\Gamma}$, we find that
\[d_{G/\Gamma}(\wt{x}\Gamma,\wt{y}\Gamma)=d_{G,\mc{X}}(\wt{x},\wt{y}).\]
The argument is as follows. If this is not true, then there is $\gamma\in\Gamma\setminus\{0\}$ with $d_{G,\mc{X}}(\wt{x},\wt{y}\gamma)<d_{G,\mc{X}}(\wt{x},\wt{y})$. By approximate left-invariance of the metric on $G$ (\cite[Lemma~A.5]{GT12} with explicit dimension dependence \cite[Lemma~B.4]{Len23b}) we see
\[d_{G,\mc{X}}(\wt{y}^{-1}\wt{x},\gamma)\le(M')^{O_s(d^{O_s(1)})}Rd_{G/\wt{\Gamma}}(x,y)\le(M')^{O_s(d^{O_s(1)})}\eps.\]
Additionally,
\[d_{G,\mc{X}}(\wt{y}^{-1}\wt{x},\mr{id}_G)\le(M')^{O_s(d^{O_s(1)})}\eps.\]
The triangle inequality yields
\[d_{G,\mc{X}}(\gamma,\mr{id}_G)\le(M')^{O_s(d^{O_s(1)})}\eps.\]
Now, the metric is comparable to the $L^\infty$ distance in second kind Mal'cev coordinates up to a factor of $(M')^{O_s(d^{O_s(1)})}$ (\cite[Lemma~A.4]{GT12} with explicit dimension dependence \cite[Lemma~B.3]{Len23b}) as long as $\eps$ is sufficiently small here, so we find
\[1\le(M')^{O_s(d^{O_s(1)})}\eps.\]
This as a contradiction as long as we take sufficiently small $\eps$ (which will be admissible in the bounds we need). Finally,
\[|\wt{F}(x\wt{\Gamma})-\wt{F}(y\wt{\Gamma})|=|F(\wt{x}\Gamma)-F(\wt{y}\Gamma)|\le Ld_{G/\Gamma}(\wt{x},\wt{y})=Ld_{G,\mc{X}}(\wt{x},\wt{y})=LRd_{G,\wt{\mc{X}}}(\wt{x},\wt{y})=LRd_{G/\wt{\Gamma}}(x,y)\]
and we are done.
\end{proof}

Having constructed a nilsequence with integral Lie bracket structure constants, it will also prove useful to construct a nilsequence which is additionally periodic. For this purpose, we quantify a construction of Manners \cite{Man14} which demonstrates that one may lift a nilsequence along a subset of the support.  We first give a quantified version of \cite[Theorem~1.5]{Man14}.

\begin{proposition}\label{prop:phi-version}
There is an integer $C_s\ge 1$ so that the following holds. Fix an integer $K\ge 1$. Suppose we have a degree $s$ nilmanifold $G/\Gamma$ with complexity $M$ and $F\colon G/\Gamma\to\mb{C}$ with Lipschitz norm $L$. Furthermore suppose we have a polynomial sequence $g\colon\mb{Z}\to G$ and a smooth function $\phi\colon\mb{R}/\mb{Z}\to[0,1]$ with $\on{supp}(\phi)\in [(3K)^{-1},(2K)^{-1}]$. 

There exists a degree $s$ nilmanifold $\wt{G}/\wt{\Gamma}$ with a polynomial sequence $\tilde{g}\colon\mb{Z}\to \wt{G}$ such that 
\[\wt{F}(\wt{g}(x)) = \phi(x/N) F(g(x \imod N)),\]
where $x \imod N$ is interpreted to lie in $\{0,\ldots,N-1\}$. Furthermore we may assume:
\begin{itemize}
    \item $\wt{G}/\wt{\Gamma}$ has complexity bounded by $O_K(1)^{O_s(1)} + O_s(M)$ and dimension at most $O_s(d)$;
    \item $\wt{F}$ has Lipschitz norm bounded by $L\cdot O_{\phi,K}(M^{O_s(d^{O_s(1)})})$;
    \item $\wt{g}(n)^{-1}\wt{g}(n+N)\in\wt{\Gamma}$ for all $n\in\mb{Z}$;
    \item If all Lie bracket structure constants of $G/\Gamma$ are integers divisible by $K$ then the Lie bracket structure constants of $\wt{G}/\wt{\Gamma}$ are contained in $K(C_s)^{-1}\mb{Z}$.
\end{itemize}
\end{proposition}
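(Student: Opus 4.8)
The plan is to quantify the construction of Manners \cite[Theorem~1.5]{Man14}, which shows that a nilsequence multiplied by a suitable smooth bump function and extended $N$-periodically is again a nilsequence of comparable complexity; the work is to propagate explicit dimension, complexity, and Lipschitz bounds through that argument and to check that the Lie-bracket divisibility is preserved up to a bounded constant.

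First reduce to $g(0) = \mr{id}_G$ by replacing $F$ with $x \mapsto F(g(0)x)$ and $g$ with $g(0)^{-1}g$; by approximate left-invariance of the metric this costs only a complexity-dependent factor in the Lipschitz norm. The target $x \mapsto \phi(x/N)F(g(x\bmod N))$ is manifestly $N$-periodic, but the naive attempt to present it on $(G\times\mb{R})/\wt{\Gamma}$ via $x \mapsto (g(x),x/N)$ fails: as $x/N$ crosses an integer, $g(x)$ must ``jump back'' to $g(x\bmod N)$, and the shift element $g(N)$ (and more generally the Taylor coefficients of $x\mapsto g(x)^{-1}g(x+N)$) need not lie in $\Gamma$ --- indeed $\langle\Gamma,g(N)\rangle$ is typically not discrete. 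Manners' fix is to enlarge $G$ by adjoining these finitely many shift elements as new Mal'cev directions together with one coordinate recording $x/N$, producing a genuine filtered degree-$s$ nilmanifold $\wt{G}/\wt{\Gamma}$ and a polynomial sequence $\wt{g}$ with $\wt{g}(x)^{-1}\wt{g}(x+N)\in\wt{\Gamma}$, on which the pullback of $F$ is single-valued and smooth \emph{except} near the locus where $x/N\bmod 1$ is close to $0$. The hypothesis $\on{supp}(\phi)\subseteq[(3K)^{-1},(2K)^{-1}]$ puts that dangerous locus outside the support of $\phi$, so $\wt{F} := \phi(\text{recorded circle coordinate})\cdot(\text{pullback of }F)$ is globally smooth and satisfies $\wt{F}(\wt{g}(x)) = \phi(x/N)F(g(x\bmod N))$. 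One adjoins at most $O_s(d)$ new directions, so $\dim(\wt{G}/\wt{\Gamma}) = O_s(d)$; bounding the coordinates of the new generators via the Baker--Campbell--Hausdorff formula, together with those coming from $\phi$ and from $1/N$, gives complexity $O_K(1)^{O_s(1)} + O_s(M)$.

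For the Lipschitz bound, the projection $\wt{G}/\wt{\Gamma}\to G/\Gamma$ forgetting the new coordinates is a smooth surjection, and one relates $d_{\wt{G}/\wt{\Gamma}}$ to $d_{G/\Gamma}$ exactly as in the proof of \cref{lem:nil-integral}, using the metric-comparison and approximate left-invariance estimates of \cite[Appendix~A]{GT12} with explicit dimension dependence from \cite{Len23}; this costs a factor $M^{O_s(d^{O_s(1)})}$, and multiplying by the fixed bump $\phi$ on $\mb{R}/\mb{Z}$ costs a further $O_{\phi,K}(1)$, yielding $L\cdot O_{\phi,K}(M^{O_s(d^{O_s(1)})})$. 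For the divisibility claim, the Mal'cev basis of $\wt{G}$ is a copy of that of $G$ (structure constants divisible by $K$ by hypothesis) together with $O_s(d)$ new directions whose brackets are computed from the BCH formula in a step-$s$ group; the only denominators introduced are the universal constants $2,3,\ldots$ from the low-degree BCH terms, so all structure constants lie in $K(C_s)^{-1}\mb{Z}$ for a suitable $C_s = C_s(s)$, after rescaling the Mal'cev basis as in \cref{lem:nil-integral} if needed.

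The delicate point is the Lipschitz estimate: one must verify that Manners' enlarged object is a finite-complexity nilmanifold on which the bump-cut periodic function is genuinely smooth (not merely continuous), and control the metric distortion through the passage to the larger group. The remaining items --- the dimension and complexity counts, the structure-constant divisibility, and the periodicity --- are bookkeeping of the same flavor as \cref{lem:nil-integral,lem:structure-constant}.
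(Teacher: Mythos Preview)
Your proposal is correct and takes the same approach as the paper: both quantify Manners' construction, the enlarged group being $\on{poly}(\mb{Z},G)\rtimes\mb{R}$ with the shift action, Mal'cev basis $Z=(0,K)$ together with $Y_{i,k}=(n\mapsto\binom{n}{k}X_i,0)$, and lattice $\on{poly}(\mb{Z},\Gamma)\rtimes K\mb{Z}$ --- your phrase ``adjoining shift elements as new Mal'cev directions'' undersells this semidirect-product structure, but since you defer to \cite{Man14} this is not a gap. One small correction on the divisibility claim: the denominator $C_s$ arises not from Baker--Campbell--Hausdorff but from the bracket $[Z,Y_{i,k}]$, which involves $\frac{d}{dt}\binom{n-Kt}{k}\big|_{t=0}$ and hence factorial-type denominators, while the brackets $[Y_{i,k},Y_{j,\ell}]=(n\mapsto\binom{n}{k}\binom{n}{\ell}[X_i,X_j],0)$ already land in $K\mb{Z}$ by expanding the product of binomials in the binomial basis, with no rescaling step needed.
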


Given this we deduce a variant of the $U^4$-inverse theorem from \cref{thm:U4-new}. This argument is essentially a combination of \cref{lem:nil-integral}, \cref{prop:phi-version}, and the argument of \cite[Theorem~1.4]{Man14}.

\begin{theorem}\label{thm:U4-modified}
Suppose we have an integer $K\ge 1$. There exists a constant $C=C_K>0$ such that the following holds. Let $N$ be prime, let $\delta>0$, and suppose that $f\colon\mb{Z}/N\mb{Z}\to\mb{C}$ is a $1$-bounded function with 
\[\snorm{f}_{U^4(\mb{Z}/N\mb{Z})}\ge\delta.\]
There exists a degree $3$ nilsequence $F(g(n)\Gamma)$ such that it has dimension bounded by $(\log(1/\delta))^{C}$, complexity bounded by $O_K(1)$, such that $F$ is $1$-Lipschitz, and such that
\[\big|\mb{E}_{n\in[N]} f(n)\ol{F(g(n)\Gamma)}\big|\ge1/\exp((\log(1/\delta))^{C}).\]
Furthermore, all Lie bracket structure constants of $G$ (the $c_{ijk}$ in \cref{def:nilmanifold}) are integers divisible by $K$, $g(n)g(n+N)^{-1}\in\Gamma$ for all $n\in\mb{Z}$, and $g(0) = \on{id}_G$.
\end{theorem}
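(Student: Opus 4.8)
The plan is to start from the raw $U^4$-inverse theorem \cref{thm:U4-new} and push the resulting degree $3$ nilsequence $F(g(n)\Gamma)$ through three successive modifications: recentering so that $g(0)=\on{id}_G$, applying \cref{lem:nil-integral} to make the Lie bracket structure constants integers divisible by $K$, and applying the periodization argument of \cite[Theorem~1.4]{Man14} (built on \cref{prop:phi-version}) to make $g$ be $N$-periodic. Throughout one tracks that the dimension stays $\le(\log(1/\delta))^{O(1)}$, the complexity stays $O_K(1)$, and the correlation stays $\ge1/\exp((\log(1/\delta))^{O(1)})$: after each step the Lipschitz norm of the test function is multiplied by at most $\exp((\log(1/\delta))^{O(1)})$, which is harmless after a final renormalization to a $1$-Lipschitz function, and the constant $C=C_K$ is chosen large enough to absorb all of these $O_K(1)$- and $\exp((\log(1/\delta))^{O(1)})$-sized losses.

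\textbf{Recentering and divisible structure constants.} Apply \cref{thm:U4-new} to obtain $G_0/\Gamma_0$ of dimension $d\le(\log(1/\delta))^{O(1)}$ and complexity $O(1)$, a $1$-Lipschitz $F_0$, and a polynomial sequence $g_0$ with correlation $\ge1/\exp((\log(1/\delta))^{O(1)})$. Writing $g_0(0)=h\gamma^{-1}$ with $h$ a bounded representative of $g_0(0)\Gamma_0$ in a fundamental domain and $\gamma\in\Gamma_0$, the identity $g_0(n)\Gamma_0=h(\gamma^{-1}g_0(0)^{-1}g_0(n)\gamma)\Gamma_0$ lets us replace $g_0$ by the (still degree $3$) conjugated and recentered sequence $\gamma^{-1}g_0(0)^{-1}g_0(n)\gamma$, which has value $\on{id}_G$ at $n=0$, and $F_0$ by $F_0\circ L_h$, whose Lipschitz norm is at most $\snorm{F_0}_{\mr{Lip}}\cdot(M\snorm{\psi(h)}_\infty)^{O(d^{O(1)})}=\exp((\log(1/\delta))^{O(1)})$ by approximate left-invariance of the metric; the nilmanifold itself is untouched. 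Next apply \cref{lem:nil-integral} with parameter $K':=C_3K$ (with $C_3$ the constant of \cref{prop:phi-version} for $s=3$): this leaves $G_0$, its filtration, and $g_0$ unchanged but replaces $\Gamma_0$ by $\wt\Gamma_0$, for which $G_0/\wt\Gamma_0$ has all structure constants integral and divisible by $C_3K$, complexity $O_{K'}(1)\cdot\exp(O(1))=O_K(1)$, the same dimension $d$, and a lift $\wt F_0$ of $F_0$ with $\snorm{\wt F_0}_{\mr{Lip}}\le\exp((\log(1/\delta))^{O(1)})$; the correlation is unchanged since $\wt F_0(g_0(n)\wt\Gamma_0)=F_0(g_0(n)\Gamma_0)$.

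\textbf{Periodization.} Now we run the argument of \cite[Theorem~1.4]{Man14}: cover $\mb{R}/\mb{Z}$ by a smooth partition of unity into $R=O(K')=O_K(1)$ translates of a fixed bump $\phi$ with $\on{supp}\phi\subseteq[(3K')^{-1},(2K')^{-1}]$ (with shifts in $(1/N)\mb{Z}$), expand $1=\sum_r\phi(n/N-a_r)$ inside the correlation, and pigeonhole to a single window $r$ with $|\mb{E}_{n\in[N]}f(n)\phi(n/N-a_r)\ol{\wt F_0(g_0(n)\wt\Gamma_0)}|\ge1/\exp((\log(1/\delta))^{O(1)})$. Shifting the summation variable so that the window becomes the fixed interval $[(3K')^{-1},(2K')^{-1}]$ (which is supported away from $0\in\mb{R}/\mb{Z}$, so that no wrap-around occurs) and recentering the shifted sequence to have value $\on{id}_G$ at $0$ exactly as above, this windowed correlation takes the form $\mb{E}_{m\in[N]}f_r(m)\phi(m/N)\ol{\wt F_{0,r}(\hat g_r(m)\wt\Gamma_0)}$ with $\hat g_r(0)=\on{id}_G$ and $f_r$ a translate of $f$. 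Applying \cref{prop:phi-version} to $(G_0/\wt\Gamma_0,\wt F_{0,r},\hat g_r,\phi)$ with parameter $K'$ yields a degree $3$ nilmanifold $\wt G_r/\wt\Gamma_r$ of dimension $O(d)=(\log(1/\delta))^{O(1)}$ and complexity $O_K(1)$, a polynomial sequence $\wt g_r$ with $\wt g_r(n)^{-1}\wt g_r(n+N)\in\wt\Gamma_r$ for all $n$, and a function $\wt F_r$ with $\snorm{\wt F_r}_{\mr{Lip}}=\exp((\log(1/\delta))^{O(1)})$ such that $\wt F_r(\wt g_r(x))=\phi(x/N)\wt F_{0,r}(\hat g_r(x\bmod N)\wt\Gamma_0)$; because the structure constants of $G_0/\wt\Gamma_0$ are integers divisible by $C_3K=K'$, those of $\wt G_r/\wt\Gamma_r$ lie in $K'(C_3)^{-1}\mb{Z}=K\mb{Z}$, i.e.\ are integers divisible by $K$. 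Since $\phi$ vanishes near the endpoints of $[0,1]$, the right-hand side equals $\phi(m/N)\wt F_{0,r}(\hat g_r(m)\wt\Gamma_0)$ on the support of $\phi(m/N)$ over $m\in[N]$, whence $|\mb{E}_{m\in[N]}f_r(m)\ol{\wt F_r(\wt g_r(m)\wt\Gamma_r)}|\ge1/\exp((\log(1/\delta))^{O(1)})$; undoing the shift and using the $N$-periodicity of $\wt g_r$ rewrites this (exactly as in \cite{Man14}, which is also where one sees that the output sequence may be taken with value $\on{id}_G$ at $0$) as a correlation of $f$ itself with an $N$-periodic degree $3$ nilsequence satisfying all the required structural constraints. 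Finally divide $\wt F_r$ by its Lipschitz norm to make it $1$-Lipschitz, losing a further $\exp((\log(1/\delta))^{O(1)})$ factor in the correlation, and take the result as the output nilsequence.

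\textbf{Main obstacle.} The heart of the matter is the periodization step: reconciling \cref{prop:phi-version}, which periodizes a nilsequence windowed by a bump supported in a fixed short interval, with the need to cover all of $[N]$ (hence a partition of unity, a pigeonhole, and a shift of the variable), while maintaining \emph{both} $g(0)=\on{id}_G$ \emph{and} exact $N$-periodicity with respect to the \emph{same} lattice as the variable is shifted and then shifted back. The boundary accounting (ensuring the shifted window lands cleanly inside $\{0,\dots,N-1\}$ with no wrap-around) and the group-theoretic compatibility of recentering with $N$-periodicity are precisely the delicate points handled in the proof of \cite[Theorem~1.4]{Man14}; everything else — propagating the complexity, dimension, Lipschitz-norm, and correlation bounds through the three modifications — is routine bookkeeping.
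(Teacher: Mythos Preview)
Your proposal is correct and follows essentially the same route as the paper, using the same three ingredients (\cref{lem:nil-integral}, a partition of unity on $\mb{R}/\mb{Z}$ into $O_K(1)$ windows plus pigeonhole, and \cref{prop:phi-version}) followed by a final recentering to force $g(0)=\on{id}_G$. The only cosmetic difference is the order of operations: the paper first applies the partition of unity and pigeonholes via the $U^4$-triangle inequality $\snorm{f}_{U^4}\le\sum_i\snorm{\phi_if}_{U^4}$ (so the inverse theorem is applied to a single windowed piece $\phi_kf$ whose support is already in the desired interval), whereas you apply \cref{thm:U4-new} to $f$ first and then insert the partition of unity into the resulting correlation; both orderings work and yield the same bounds. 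Your early recentering step (before \cref{lem:nil-integral}) is harmless but unnecessary, and your final appeal to \cite{Man14} for $g(0)=\on{id}_G$ is exactly what the paper does explicitly via $g(n)=\{g_3(0)\}^{-1}g_3(n)[g_3(0)]$.
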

\begin{remark}
In all our applications, we will take $K = 12$. 
\end{remark}

We deduce \cref{thm:U4-modified} from \cref{prop:phi-version}; this is essentially as in the proof of \cite[Theorem~1.4]{Man14}.

\begin{proof}[Proof of \cref{thm:U4-modified}]
By adjusting constants appropriately, we may assume that $N$ is larger than an absolute constant (and for small $N$ apply the $U^2$-inverse theorem noting all Lie bracket structure constants for an abelian nilmanifold are $0$).

We take a partition of unity of $\mb{R}/\mb{Z}$ denoted $\phi_1,\ldots,\phi_{20K}\colon\mb{R}/\mb{Z}\to\mb{R}$ such that $\on{supp}(\phi_j)\subseteq[j/(20K),j/(20 K) + 1/(10K)]\imod 1$. We abusively denote $\phi_i\colon\mb{Z}/N\mb{Z}\to\mb{R}$ via $\phi_i(n) := \phi_i(n/N)$. Recalling that $\snorm{\cdot}_{U^4(\mb{Z}/N\mb{Z})}$ is a norm, we have 
\[\snorm{f}_{U^4(\mb{Z}/N\mb{Z})}\le \sum_{i=1}^{20K}\snorm{\phi_i f}_{U^4(\mb{Z}/N\mb{Z})}.\]
Thus there exists an index $k$ such that $\snorm{\phi_k f}_{U^k(\mb{Z}/N\mb{Z})}\ge \delta/(20 K)$. Applying a translation, we may assume that $\on{supp}(\phi_k f)$ is contained in $[\lceil N/(3K)\rceil,\lfloor N/(2K)\rfloor] \imod N$. 

Applying \cref{thm:U4-new}, we have that there exists degree $3$ nilsequence $F_1(g_1(n)\Gamma_1)$ on $G/\Gamma_1$ with complexity $O(1)$, dimension $(\log(1/\delta))^{O(1)}$, and Lipschitz constant $1$ such that 
\[|\mb{E}_{n\in [N]}\phi_k(n/N) f(n) \ol{F_1(g_1(n)\Gamma)}|\ge 1/\exp((\log(1/\delta))^{O(1)}).\]
By \cref{lem:nil-integral}, we may construct $\Gamma_2,F_2$ with Lie bracket structure constants for $G/\Gamma_2$ being integers divisible by $K\cdot C_s$ (where $s=3$) such that 
\[|\mb{E}_{n\in [N]}\phi_k(n/N) f(n) \ol{F_2(g(n)\Gamma_2)}|\ge 1/\exp((\log(1/\delta))^{O(1)}).\]
Furthermore $G/\Gamma_2$ has complexity bounded by $O(K)$ and $F_2$ has Lipschitz norm bounded by $(2K)^{O(d^{O(1)})}$. Next, we may write
\[\mb{E}_{n\in [N]}\phi_k(n/N) f(n) \ol{F_2(g_2(n)\Gamma_2)} = \mb{E}_{n\in [N]} f(n) \ol{\phi_k(n/N) F_2(g_2(n)\Gamma_2)}\]
and apply \cref{prop:phi-version} to $\phi(x/N) F_2(g_2(x)\Gamma_2)$. In particular, we obtain $F_3$, $G_3$, $\Gamma_3$, and $g_3(n)$ such that 
\[\mb{E}_{n\in [N]} f(n) \ol{\phi_k(n/N) F_2(g_2(n)\Gamma_2)} = \mb{E}_{n\in\mb{Z}/N\mb{Z}} f(n) \ol{F_3(g_3(n)\Gamma)}\]
with $F_3$ having Lipschitz norm $(O_K(1))^{O(d^{O(1)})}$, $G_3/\Gamma_3$ having complexity $O_K(1)$, $g_3(n)$ being $N$-periodic, and all Lie structure constants divisible by $K$. 

Note that we do not necessarily have $g_3(0) \neq \on{id}_{G_3}$. This may be repaired by defining $g(n) = \{g_3(0)\}^{-1}g_3(n)g_3(0)^{-1}\{g_3(0)\}$ where $\{g_3(0)\}^{-1}g_3(0) = [g_3(0)]\in \Gamma_3$ and $\{g_3(0)\}$ has Mal'cev coordinates of the second kind bounded by $1/2$. Taking 
\[F(x\Gamma_3) := F_3(\{g_3(0)\}x\Gamma_3)\]
we have
\[\mb{E}_{n\in\mb{Z}/N\mb{Z}} f(n) \ol{F_3(g_3(n)\Gamma_3)} = \mb{E}_{n\in\mb{Z}/N\mb{Z}} f(n) \ol{F(g(n)\Gamma_3)}.\]
As left-multiplication by bounded elements preserves the metric up to an admissible multiplicative factor (\cite[Lemma~B.4]{Len23b}), we find that $F$ has Lipschitz norm $(O_K(1))^{O(d^{O(1)})}$. Thus, rescaling $F$ to have Lipschitz norm $1$ will keep the correlation sufficiently large. Furthermore as left-multiplying a fixed element and right-multiplying a fixed element in $\Gamma_3$ does not change whether our polynomial sequence is $N$-periodic on $G_3/\Gamma_3$ this completes the proof. 
\end{proof}

We now prove \cref{prop:phi-version}. We use the construction of Manners \cite[Theorem~1.5]{Man14}; we modify the construction slightly to match the definition of filtration used in this paper.

\begin{proof}[Proof of \cref{prop:phi-version}]
\textbf{Step 1: Setup and constructing the proposed Mal'cev basis.}
We consider the given degree $s$ nilpotent Lie group $G$ with filtration 
\[G=G_0 = G_1\geqslant G_2\geqslant\cdots\geqslant G_s\geqslant\on{Id}_G.\]
Let $\mf{g}_j=\log_G(G_j)$.

For $i\ge 0$, let $G^{+i}$ denote the prefiltration $G_i\geqslant G_{i+1}\geqslant G_{i+2}\geqslant\cdots \geqslant G_s\geqslant\on{Id}_G$. Let $H_i = \on{poly}(\mb{Z},G^{+i})$, where we define polynomial sequences with respect to prefiltrations as in \cite[Definition~B.1]{GTZ12} with the prefiltration $\mb{Z}\geqslant\mb{Z}\geqslant\{0\}\geqslant\cdots$ on the domain $\mb{Z}$ (see also \cite[p.~28]{GT12}, which includes the formal definition of prefiltrations).

By the Filtered Lazard--Leibman theorem of Green, Tao, and Ziegler \cite[Proposition~B.6]{GTZ12}, $H_i$ are not only groups but also can be given the prefiltration
\begin{align*}
H_0&\geqslant H_1\geqslant H_2\geqslant\cdots\geqslant H_s\geqslant\on{Id}_{H_0}.
\end{align*}
This implies that one has the genuine filtration 
\[H_1=H_1\geqslant H_2\geqslant\cdots\geqslant H_s\geqslant\on{Id}_{H_0}.\]
We now define the semidirect product $H_i\rtimes\mb{R}$ by defining a group operation $\ast$ via
\[(x\mapsto g(x),t)\ast (x\mapsto g'(x),t') = (x\mapsto g(x + t')g'(x),t+t').\]
Note that this is slightly different than that given in the work of Manners \cite{Man14}, since we take right-quotients by the cocompact subgroup, and in fact this is rather the opposite group of the semidirect product of the opposite groups (but we suppress such notational dependence). The semidirect product in question is embedding $t\in\mb{R}$ as the ``shift by $t$'' automorphism. Additionally, we abusively identify $g\in\on{poly}(\mb{Z},G^{+i})$ with a function $\mb{R}\to G^{+i}$ defined by using Taylor expansion and then allowing the argument to vary over reals instead of integers using the Lie group structure; this extension is unique due to a generalization of the identity theorem.

One can easily prove that 
\[H_0\rtimes\mb{R}\geqslant H_1\rtimes\mb{R}\geqslant H_2\rtimes\{0\}\geqslant\cdots\geqslant H_s\rtimes\{0\}\geqslant\on{Id}_{H_0\rtimes\mb{R}}\]
is a prefiltration; this follows immediately from the proof given in \cite[Proposition~14]{HK18}. (The only nontrivial aspect involves the $\rtimes\mb{R}$ component, and it is not too hard to show that it suffices to check relevant properties at the level of the generators $(\on{id}_{H_0},t)$ since we have the above prefiltration on $H_0$.)

Let $X_1,\ldots,X_d$ denote the Mal'cev basis on the Lie algebra associated to $G$ (which is adapted to the $s$-step filtration given at the beginning). We construct the desired Mal'cev basis for $H_0\rtimes\mb{R}$ as follows:
\begin{itemize}
    \item Let $Z = (0,K)$.
    \item Let $Y_{i, k} = (n \mapsto \binom{n}{k}X_i, 0)$; if $X_i\in \mf{g}_j\setminus \mf{g}_{j+1}$ define the \emph{level} of $Y_{i,k}$ to be $j-k$. We restrict attention to $Y_{i,k}$ of nonnegative level and level at most $s$. (Note that there are no contributions with $j=0$.)
    \item We order the Mal'cev basis left-to-right by increasing level; within level we order with increasing $k$, and then in increasing order of $i$. $Z$ is inserted as the first element of level $1$.
\end{itemize}
\begin{remark}
By Taylor expansion (see \cite[Lemma~B.9]{GTZ12}) we see that this is a Lie basis of the necessary group, filtered with respect to the desired prefiltration. Note that \emph{a priori} the Lie algebra of $\on{poly}(\mb{Z},G^{+0})$ is some abstract space, but we can utilize the embedding $H_0=\on{poly}(\mb{Z},G^{+0})\hookrightarrow G^{\mb{Z}}$ and the corresponding representation of Lie algebras to yield the above form of writing elements of the tangent space.
\end{remark}

Our goal is now to verify that the discrete cocompact subgroup $\wt{\Gamma} = \on{poly}(\mb{Z},\Gamma)\rtimes (K\mb{Z})$ is the push-forward of $\mb{Z}^{\dim(\wt{G})}$ for Mal'cev coordinates of the second kind. This algorithmic proof will also immediately verify the remaining property of the Mal'cev basis (regarding rationality of the Lie bracket structure constants). After this, we will give the necessary nilsequence lifting construction and verify the necessary properties.

\noindent\textbf{Step 2: Verifying Mal'cev properties modulo the semidirect product.}
We first prove that the $Y_{i,k}$ graded as above form a Mal'cev basis for 
\[H_0\geqslant H_1\geqslant H_2\geqslant\cdots\geqslant H_s\geqslant\on{Id}_{H_0}\]
with discrete cocompact subgroup $\Gamma'=\on{poly}(\mb{Z},\Gamma)$. We proceed by induction upwards by step. Note that handling the final step is trivial as the group $H_s$ is abelian.

Assume the inductive hypothesis that $H_{j+1}\cap\Gamma'$ is the image of the integer lattice in Mal'cev coordinates of the second kind (which necessarily uses only those basis elements of level at least $j+1$). Using Taylor expansion (see \cite[Lemma~B.9]{GTZ12}), we may write an element $H_{j}\cap\Gamma'$ as 
\begin{equation}\label{eq:taylor-Hj}
n\mapsto \prod_{i=0}^{s-j}g_{i+j}^{\binom{n}{i}}
\end{equation}
where $g_i\in G_{i+j}\cap \Gamma$. ($g_i\in\Gamma$ easily follows from our polynomial lying in $\Gamma'=\on{poly}(\mb{Z},\Gamma)$ sequence.)

Now, for our given value of $j$, we prove by an induction on $\ell$ that the set of polynomials of the form 
\[n\mapsto \prod_{i=0}^{s-j}g_{i+j}^{\binom{n}{i}}\]
with $g_{i+j}\in G_{i+j+1}\cap \Gamma$ for $0\le i\le\ell$ and $g_i\in G_{i+j}\cap\Gamma$ for $\ell+1\le i\le s-j$ is generated by the level $j+1$ elements and the largest $s-j-\ell$ ``types'' of level $j$ elements of the Mal'cev basis for $H_0$. (Here ``types'' means in the sense of the parameter $k$ we used to define the ordering on the $Y_{i,k}$ above.) The case $\ell = s-j$ is exactly the above inductive assumption regarding $H_{j+1}\cap\Gamma'$, and the case $\ell=0$ is exactly what we need to complete the (outer) induction.

We now suppose that this is known for some $s-j\ge\ell\ge 1$, and wish to prove the result for $\ell-1$. We may write
\[\bigg(n\mapsto \prod_{i=0}^{s-j}g_{i+j}^{\binom{n}{i}}\bigg) = \bigg(n\mapsto \prod_{i=0}^{\ell-1}g_{i+j}^{\binom{n}{i}}\bigg) \bigg(n\mapsto g_{\ell + j}^{\binom{n}{\ell}}\bigg)\bigg(n\mapsto \prod_{i=\ell + 1}^{s-j}g_{i+j}^{\binom{n}{i}}\bigg)=ABC,\]
where $A,B,C$ are defined in the obvious way. Note that $A\in H_{j+1}\cap\Gamma'$. We may write $g_{\ell+j} = \prod_{r}\exp(t_rX_r)$ for $t_r\in\mb{Z}$ and $X_r\in \mf{g}_{\ell+j}$. Note that 
\[D^{-1}B=\bigg(\prod_{r}\bigg(n\mapsto \exp(t_rX_r)^{\binom{n}{\ell}}\bigg)\bigg)^{-1}\bigg(n\mapsto g_{\ell + j}^{\binom{n}{\ell}}\bigg) \in H_j\cap\Gamma'\]
where $D$ is defined in the obvious way. The Taylor expansion of the polynomial $(D^{-1}B)C$ (in a similar form to \cref{eq:taylor-Hj}) trivially has its coordinates of index $0,\ldots,\ell-1$ being the identity (plugging in the values $n=0,1,\ldots,\ell-1$). The $\ell$th Taylor coordinate is in $G_{\ell + j + 1}\cap\Gamma'$, considering the value $n=\ell$. Thus
\[ABC=D(D^{-1}AD)(D^{-1}BC),\]
where $D^{-1}BC$ has the form
\[n\mapsto\prod_{i=\ell}^{s-j}g_{i+j}'^{\binom{n}{i}}\]
for $g_{i+j}'\in G_{i+j}\cap\Gamma$ and additionally $g_{\ell+j}'\in G_{\ell+j+1}\cap\Gamma$.

Note that $D^{-1}BC$ is of the correct form of Taylor series to apply the inductive hypothesis (since $g_{\ell+j}'\in G_{\ell+j+1}$), yielding
\[D^{-1}BC=\prod_{\mu\in M}\exp(t_\mu X_\mu)\cdot F=EF\]
where $M$ ranges over the $s-j-\ell$ ``types'' of level $j$ elements of the Mal'cev basis of $H_0$, from $\ell+1$ to $s-j$ inclusive, and is arranged in the correct order within the product $E$, and $F\in H_{j+1}\cap\Gamma'$. Finally, we may write
\[ABC=DE(E^{-1}(D^{-1}AD)E\cdot F),\]
and note that $H_{j+1}$ is normal in $H_0$ due to the original prefiltration we established, so we find $E^{-1}(D^{-1}AD)E\in H_{j+1}\cap\Gamma'$. Thus we have
\[ABC=DEF'\]
where $D,E$ form the necessary ordered product for the Mal'cev second kind representation, and $F'\in H_{j+1}\cap\Gamma'$. Finally, we recall the induction hypothesis for $H_{j+1}\cap\Gamma'$ to finish.

\noindent\textbf{Step 3: Semidirect product and Lie bracket structure constants.}
We now handle the generator $Z$. Note that given an element $(p,t)\in\on{poly}(\mb{Z},\Gamma)\rtimes (K\mb{Z})$, we may write
\[(p,t) = (\on{id}_{G},t) \ast (p, 0)\]
and therefore if $Z$ was the highest element of the ordering (which would correspond to a situation where the filtration contains $H_1\rtimes\{0\}$ instead of $H_1\rtimes\mb{R}$) we would easily be done. Instead, though, it is inserted right before the Mal'cev basis components for $H_1$. Note however that
\[(\on{id}_{G},-K)\ast(p^{-1},0)\ast(\on{id}_{G},K)\ast(p,0)=(p^{-1},-K)\ast(p,K)=(x\mapsto p^{-1}(x+K)p(x),0)\]
and recall that derivatives of polynomials in $H_j\cap\on{poly}(\mb{Z},\Gamma)$ are in $H_{j+1}\cap\on{poly}(\mb{Z},\Gamma)$ due to the definitions of the shifted filtrations on $G$. Therefore we may commute $(\on{id}_G,t)$ across the product of the level $0$ terms in the representation above, and we will introduce only terms of level $1$ and higher (and they will lie in $\wt{\Gamma}$). Then using the Mal'cev representation for $H_1\cap\Gamma'$ finishes. This completes our discussion that the generators listed form a Mal'cev basis.

We now compute the Lie bracket structure constants associated to the Mal'cev basis. We will compute all structure constants via the identity 
\begin{align*}
[V,W] &= \frac{d}{ds}\frac{d}{dt} \exp(sV)\exp(tW)\exp(-sV)\exp(-tW)\bigg|_{u,v=0}
\end{align*}
which holds for any Lie group and the associated Lie bracket.

We first compute the constants associated with $Z$. Note that 
\begin{align*}
[Y_{i,k},Z] &= \frac{d}{dt_0}\frac{d}{dt_1}\big(n\mapsto \exp(t_0X_i)^{\binom{n}{k}},0\big)(\on{id}_G,Kt_1)\big(n\mapsto \exp(t_0X_i)^{-\binom{n}{k}},0\big)(\on{id}_G,-Kt_1)\bigg|_{t_0,t_1=0}\\
&=\frac{d}{dt_0}\frac{d}{dt_1}\big(n\mapsto\exp(t_0X_i)^{\binom{n}{k}-\binom{n-Kt_1}{k}},0\big)\bigg|_{t_0,t_1=0}\\
& = \log_{H_0\rtimes\mb{R}}\bigg(n\mapsto\exp(X_i)^{-\frac{d}{dt_1}\binom{n-Kt_1}{k}\big|_{t_0=0}},0\bigg)=\bigg(n\mapsto-\frac{d}{dt_1}\binom{n-Kt_1}{k}X_i\bigg|_{t_1=0},0\bigg),
\end{align*}
where the last line follows from considering the relationship between the differential structure on $H_0$ and on $G$ (which allows us to ``implicitly differentiate in the obvious way''). The coefficients of $\frac{d}{dt_1}\binom{n-Kt_1}{k}\big|_{t_1=0}$ are all integer multiples of $K^t/k!$ for $1\le t\le k-1$. We may easily express this in terms of various $Y_{i,k'}$ for $k'<k$, and the structure constants clearly have the desired form, lying in $K(C_s)^{-1}\mb{Z}$ for an appropriate integer $C_s$.

We next compute the structure constants associated with two polynomials. Notice that 
\begin{align*}
&[Y_{i,k},Y_{j,\ell}]\\
&= \frac{d}{dt_1}\frac{d}{dt_2}\big(n\mapsto \exp(t_1X_i)^{\binom{n}{k}},0\big)\big(n\mapsto \exp(t_2X_j)^{\binom{n}{\ell}},0\big)\big(n\mapsto \exp(t_1X_i)^{-\binom{n}{k}},0\big)\big(n\mapsto \exp(t_2X_j)^{-\binom{n}{\ell}},0\big)\bigg|_{t_1,t_2=0}\\
&=\frac{d}{dt_1}\frac{d}{dt_2}\bigg(n\mapsto \exp\bigg(t_1t_2\binom{n}{k}\binom{n}{\ell}[X_i,X_j]\bigg),0\bigg)\bigg|_{t_1,t_2=0}\\
&=\bigg(n\mapsto\binom{n}{k}\binom{n}{\ell}[X_i,X_j],0\bigg).
\end{align*}
The second equality follows from using Baker--Campbell--Hausdorff to multiply the polynomial sequences (as pointwise functions, say) and collapse them, and then noting that we may discard terms with higher powers than $t_1^1$ and $t_2^1$. The only relevant remaining term is the $t_1t_2$ which has the claimed form.

Note that as $\binom{n}{k}\binom{n}{\ell}$ is an integer-valued polynomial, by Polya's classification of integer polynomials \cite{Pol15}, it follows that this may be written as an integral combination of binomial coefficients with coefficients bounded by $O_s(1)$. We obtain a representation in terms of various $Y_{i',k'}$ where $k'\le k+\ell$ and $X_{i'}$ shows up in the expansion of $[X_i,X_j]$. We are done with this part of the proof, which includes verifying the well-definedness of various lifted filtrations, Mal'cev bases, and also the rationality properties of the Lie bracket structure constants.

\noindent\textbf{Step 4: Lifting the nilsequence and checking quantitative dependences.}
We now actually define the desired function. We are given a polynomial sequence $g(x)$ and we define $q(x) = g(xN/K)$. We define 
\[\wt{g}(x) = (q,0)\ast(\on{id}_G,Kx/N).\]
Note that $\wt{g}(x)$ is $N$-periodic as $\wt{g}(x+N)=\wt{g}(x)\ast(\on{id}_G,K)$. We now define $\wt{F}$ on $\on{poly}(\mb{R},G)/\on{poly}(\mb{Z},\Gamma)\times [0,K)$ via
\[\wt{F}(x\cdot \on{poly}(\mb{Z},\Gamma),t) = \phi(t/K)\cdot F(x(0)\Gamma).\]
This extends uniquely to $H_0\rtimes\mb{R}$ if we enforce right-$\wt{\Gamma}$-invariance, which yields
\[\wt{F}(x\cdot \on{poly}(\mb{Z},\Gamma),t) = \phi(\{t/K\})\cdot F(x(-K\lfloor t/K\rfloor)\Gamma).\]
That $\wt{F}$ is appropriately Lipschitz and that we can then descend this construction to a proper filtration will be checked last.

The key point is the following computation:
\begin{align*}
\wt{F}(\wt{g}(x)) &= \wt{F}((q,0)\ast (\on{id}_G,Kx/N))=\wt{F}((q(\cdot + Kx/N),Kx/N))\\
&=\phi(\{x/N\})F(q(Kx/N - K\lfloor x/N\rfloor))=\phi(\{x/N\})F(g(x - N\lfloor x/N\rfloor))\\
&=\phi(\{x/N\})F(g(x \imod N)).
\end{align*}
We now check that the function $|\wt{F}|$ is appropriately Lipschitz with respect to the Mal'cev basis specified. Since $\wt{F}$ is bounded by $\snorm{F}_{\infty}\snorm{\phi}_{\infty}$, note that for $x,y\in (H_0\rtimes \mb{R})/\wt{\Gamma}$ if $d_{(H_0\rtimes \mb{R})/\wt{\Gamma}}(x,y)\ge \eps$ we have 
\[\frac{\wt{F}(x)-\wt{F}(y)}{d_{(H_0\rtimes \mb{R})/\wt{\Gamma}}(x,y)}\le 2\eps^{-1}\snorm{F}_{\infty}\snorm{\phi}_{\infty}.\]

We now assume that $\eps\le O_K(M)^{-O_s(d)^{O_s(1)}}$ to be chosen later. The diameter of $(H_0\rtimes \mb{R})/(\poly(\mb{Z},\Gamma)\rtimes K\mb{Z})$ is bounded by $O_K(M)^{O_s(d)^{O_s(1)}}$ by \cite[Lemma~B.8]{Len23b}\footnote{We have proven that the constructed basis is a filtered basis for the Lie algebra. Separating by level and then by ``type'', we see that the filtered basis has nesting property of degree $\le 2s^2$. \cite[Lemma~B.8]{Len23b} is proven under only the assumption that the basis for the Lie algebra exhibits this nesting property.}
, so there exist $\wt{x},\wt{y}$ such that 
\[d_{(H_0\rtimes \mb{R})/\wt{\Gamma}}(x,y) = d_{(H_0\rtimes \mb{R})}(\wt{x},\wt{y})\text{ and }d_{G,\wt{\mc{X}}}(\wt{x},\mr{Id}_G) + d_{G,\wt{X}}(\wt{y},\mr{Id}_G)\le O_K(M)^{O_s(d^{O_s(1)})}.\]

Furthermore we may chose the fundamental representatives $\wt{x}$ and $\wt{y}$ such that the second coordinate is in $[0,K)$ (by multiplying by an appropriate element of $\on{Id}_{G}\rtimes(K\mb{Z})$ on the right and noting that the metric is right-invariant). Note that if the second coordinate of the representative $\wt{x}$ is outside the range $[1/4,2/3]$, the assumption on the support of $\phi$ guarantees that $\wt{F}(\wt{x}\Gamma) = \wt{F}(\wt{y}\Gamma) = 0$ and therefore verifying the Lipschitz constant in this case is trivial. (This uses that $\eps$ is sufficiently small.)

Else we denote $\wt{x} = (x^{\ast},x')$ with $x^{\ast}\in H_0$ and $x'\in[1/4,2/3]$ and $\wt{y}$ analogously. We now have 
\begin{align*}
\big|\wt{F}(\wt{x}\wt{\Gamma}) - \wt{F}(\wt{y}\wt{\Gamma})\big| &= \big|\phi(x'/K) F(x^{\ast}(0)\Gamma) - \phi(y'/K) F(y^{\ast}(0)\Gamma)\big| \\
& \le \snorm{\phi}_{\infty} \big|F(x^{\ast}(0)\Gamma) - F(y^{\ast}(0)\Gamma)\big| + \big|\phi(x'/K)-\phi(y'/K)\big|\snorm{F}_{\infty}.
\end{align*}
Let $\psi$ denote the second Mal'cev coordinates with respect to the basis specified for $(H_0\rtimes \mb{R})/\wt{\Gamma}$. Note that $x',y'$ are controlled only by $Z$ in the Mal'cev basis and thus we can bound the second term by
\[\big|\phi(x^{\ast}/K)-\phi(y^{\ast}/K)\big|\lesssim_K \snorm{\psi'}_{\infty}\cdot \big|\psi(\wt{x})-\psi(\wt{y})\big|.\]
To bound the first term, note that 
$d_{G/\Gamma}(x^{\ast}(0)\Gamma,y^{\ast}(0)\Gamma)\le d_G(x^{\ast}(0),y^{\ast}(0))$. 

Let $\wt{x} = \exp((x'/K)Z)\prod_{k,\ell}\exp(x_{k,\ell}Y_{k,\ell})$ and analogously for $\wt{y}$ where the product over $Y_{k,\ell}$ is taken in the ordering specified for the Mal'cev basis. We have 
\[x^{\ast}(0) = \prod_{k,\ell}\exp\bigg(\binom{0}{\ell}x_{k,\ell}X_k\bigg)\text{ and }y^{\ast}(0) = \prod_{k,\ell}\exp\bigg(\binom{0}{\ell}y_{k,\ell}X_k\bigg).\]

Note that the only terms which contribute to the above product are $Y_{k,0} = (n\mapsto \binom{n}{0}X_k) = (n\mapsto X_k)$. If $X_k\in \mf{g}_{j}\setminus \mf{g}_{j+1}$, this has level precisely $j$ and thus the product (removing terms which are the identity) are in the correct order for the Mal'cev basis on $G$.

Therefore we have 
\begin{align*}
d_{G}(x^{\ast}(0),y^{\ast}(0)) &\le O_K(M)^{O_s(d^{O_s(1)})}\max_{k}|x_{k,0}-y_{k,0}|\\
&\le O_K(M)^{O_s(d^{O_s(1)})}\big(\max_{k,\ell}|x_{k,\ell}-y_{k,\ell}| + |x'/K-y'/K|\big)\\
&\le O_K(M)^{O_s(d^{O_s(1)})}d_{H_0\rtimes \mb{R}}(\wt{x},\wt{y}).
\end{align*}
The first line follows from \cite[Lemma~B.3]{Len23b} applied on the Mal'cev basis $\mc{X}$ of $G$ and the second inequality is trivial. The final inequality follows from \cite[Lemma~B.3]{Len23b} and that $\max_{k,\ell}|x_{k,0}-y_{k,0}| + |x'/K-y'/K|$ corresponds (up to constants) to the $L^{\infty}$ distance in the Mal'cev coordinates when $Z$ is placed first in the order and one may return to the original distance at the cost of $O_K(M)^{O_s(d)^{O_s(1)}}$ by \cite[Lemma~B.3]{Len23b}. (Shifting $Z$ to the first position is clearly a $1$-rational change of basis.)

\noindent\textbf{Step 5: Reducing to a proper filtration.}
We now (finally) perform the last reduction to place our polynomial sequence on a proper filtration. By inspection we have
\[\wt{g}(x) = (h_0^{\ast}) (h_1^{\ast})^{\binom{x}{1}}\]
with $h_i^{\ast}\in H_i\rtimes \mb{R}$ for $i\le 1$. Let $h_0^{\ast} = \{h_0^{\ast}\}[h_0^\ast]$ such that $[h_0^\ast]\in\wt{\Gamma}$ and $\snorm{\psi(\{h_0^{\ast}\})}\le 1$. 

For $\wt{p}\in H_1\rtimes\mb{R}$ define $F^{\ast}(\wt{p}\wt{\Gamma}) = \wt{F}(\{h_0^{\ast}\}\wt{p}\wt{\Gamma})$, and note that
\begin{align*}
\wt{F}(\wt{g}(x)\wt{\Gamma}) &= \wt{F}(\{h_0^{\ast}\}[h_0^\ast](h_1^{\ast})^{\binom{x}{1}}\wt{\Gamma})=\wt{F}(\{h_0^{\ast}\}([h_0^\ast]h_1^{\ast}[h_0^\ast]^{-1})^{\binom{x}{1}}[h_0^\ast]\wt{\Gamma})\\
&= F^{\ast}(([h_0^\ast]h_1^{\ast}[h_0^\ast]^{-1})^{\binom{x}{1}}\wt{\Gamma}).
\end{align*}
Since left-multiplication by bounded elements is suitably Lipschitz and since $H_1\rtimes \mb{R}$ is normal in $H_0\rtimes \mb{R}$, we have that $([h_0^\ast]h_1^{\ast}[h_0^\ast]^{-1})^{\binom{x}{1}}$ is a polynomial sequence with respect to the filtration $H_1\rtimes \mb{R} = H_1\rtimes \mb{R}\geqslant H_2\rtimes \{0\}\geqslant \cdots \geqslant H_s\rtimes \{0\} \geqslant \on{Id}_{H_1\rtimes \mb{R}}$. 

Note that pre- or post-multiplication by a fixed constant does not change $N$-periodicity. Furthermore note that $Z$ and all level $1$ and higher elements in the order given by removing all level $0$ elements gives a valid Mal'cev basis with respect to the proper filtration
\[H_1\rtimes \mb{R} = H_1\rtimes \mb{R}\geqslant H_2\rtimes \{0\}\geqslant \cdots \geqslant H_s\rtimes \{0\} \geqslant \on{Id}_{H_1\rtimes \mb{R}}.\]
Thus the polynomial sequence $([h_0]h_1^{\ast}[h_0]^{-1})^{\binom{x}{1}}$ with respect to the function $\wt{F}(\cdot)$ on nilmanifold $(H_1\rtimes\mb{R})/(\wt{\Gamma}\cap(H_1\rtimes\mb{R}))$ with this filtration and Mal'cev basis provides the desired construction.
\end{proof}

\section{Deferred lemmas}\label{sec:deferred}
We first prove the necessary partition of unity result on a nilmanifold. 
\begin{proof}[Proof of \cref{lem:nilmanifold-partition-of-unity}]
The key trick is to consider a ``smoothed sum'' of fundamental domains and perform a partition of unity. Let $\delta$ be a constant to be specified later and let $f(x)\ge 0$ be a smooth bump function on $\mb{R}$ with $\on{supp}(f)\in[-1/4,1/4]$, $\int_{\mb{R}}f(x)dx = 1$, and $\snorm{f}_{\infty}\le O(1)$. Let $H_j$ be smooth functions indexed by $j\in S$ of size $O(1/\delta)$ which are nonnegative with $\on{supp}(H_j)\in [j\delta,(j+2)\delta]$, $\snorm{H_j}_{\infty}\le O(1)$, and $\sum_{j\in S}H_j(x)$ equal to $1$ for $|x|\le 3/2$ and $0$ for $|x|\ge 2$.

For each $g\in G$ and $\beta\in\mb{R}$, there exists a unique $\gamma_i\in\Gamma$ such that
$\psi(g\gamma_i)\in[-\beta,1-\beta)^d$ (see e.g.~\cite[Lemma~B.6]{Len23b}). Define for $\beta\in\mb{R}$ the function $T_{\beta}\colon G\to\Gamma$ such that $\psi(gT_\beta(g))\in[-\beta,1-\beta)^d$; note that this function suffers from discontinuities at the boundaries of $[-\beta,1-\beta)^d$.

Note that for all $g\in G$ we have
\begin{align*}
1 &= \int_{\beta\in\mb{R}}f(\beta)d\beta=\int_{\beta\in \mb{R}}\prod_{i=1}^{d}\bigg(\sum_{j\in S}H_j(\psi_i(gT_{\beta}(g)))\bigg)f(\beta)d\beta\\
&=\sum_{j_1,\ldots,j_d\in S}\int_{\beta\in\mb{R}} \prod_{k=1}^{d}H_{j_k}(\psi_k(gT_{\beta}(g)))f(\beta)d\beta.
\end{align*}

The functions in our collection will be indexed by $j_1,\ldots,j_d$ and are defined by
\[\tau_{j_1,\ldots,j_d}(g\Gamma) = \int_{\beta\in\mb{R}}\prod_{k=1}^{d}H_{j_k}(\psi_k(gT_{\beta}(g)))f(\beta)d\beta.\]

This is a well-defined function since $gT_{\beta}(g) = g\gamma T_{\beta}(g\gamma)$ for all $\gamma\in\Gamma$. Furthermore note that $\snorm{\tau_{j_1,\ldots,j_d}}_{\infty}\le\exp(O(d))$. We will use these as our partition of unity.

Let $L = O(M)^{O_s(d^{O_s(1)})}$ with implicit constants chosen sufficiently large. To check the Lipschitz property, it suffices to consider $x\Gamma, y\Gamma$ such that $d_{G/\Gamma}(x\Gamma, y\Gamma)\le L^{-1}$. We may find $\wt{x},\wt{y}\in G$ such that
\[d_{G/\Gamma}(x\Gamma, y\Gamma) = d_G(\wt{x},\wt{y}),~\psi(\wt{x}),\psi(\wt{y})\in [-1,2)^{d},\text{ and } d_{G}(\wt{x},\on{id}_G) + d_{G}(\wt{y},\on{id}_G)\le L.\]

If $d_G(\gamma,\on{id}_G)\ge L^3$ then note that for $g\in G$ with $d_G(g,\on{id}_G)\le L$ we have 
\begin{align*}
d_{G}(g\gamma,\on{id}_G)&\ge d_{G}(g\gamma,g) - d_G(g,\on{id}_G)\ge L^{-1}d_{G}(\gamma,\on{id}_G) - d_G(g,\on{id}_G)\ge L^{-1}(L^3) - L\ge L^2/2>3,
\end{align*}
using quantitative approximate left-invariance of the metric (\cite[Lemma~B.4]{Len23b}). Therefore we have $d_{G}(T_\beta(\wt{x}),\on{id}_G) + d_{G}(T_\beta(\wt{y}),\on{id}_G)\le 2L^3$ for $|\beta|\le 2$. Let $\wt{\Gamma}\subseteq\Gamma$ be the set of all $\wt{\gamma}\in\Gamma$ such that $d_{G}(\wt{\gamma},\on{id}_G)\le 2L^3$ and note that $|\wt{\Gamma}|\le L^{O_s(d^{O_s(1)})}$.

We now show the desired properties of a partition of unity. Let $\beta$ be drawn from the probability distribution $f(\beta)$. Let $\eta:= d_{G}(\wt{x},\wt{y})$ and note that for $\gamma\in \wt{\Gamma}$ we have $\snorm{\psi(\wt{x}\gamma)-\psi(\wt{y}\gamma)}\le \eta\cdot L^{O_s(d^{O_s(1)})}=:\eta'$. Therefore
\begin{align*}
\mb{P}_{\beta}[T_{\beta}(\wt{x})\neq T_{\beta}(\wt{y})] &\le \sum_{\gamma\in \wt{\Gamma}}\mb{P}_{\beta}[\psi(\wt{x}\gamma)\in [-\beta,1-\beta)^{d} \cap \psi(\wt{y}\wt{x}^{-1}(\wt{x}\gamma))\notin [-\beta,1-\beta)^{d}]\\
&\le \sum_{\gamma\in \wt{\Gamma}}\sum_{i=1}^{d}\mb{P}_{\beta}[\psi_i(\wt{x}\gamma)\in [-\beta - \eta',-\beta]\cup [1-\beta-\eta',1-\beta)]\le\eta\delta^{-1}L^{O_s(d^{O_s(1)})}.
\end{align*}
Thus 
\begin{align*}
\bigg|\tau_{j_1,\ldots,j_d}(\wt{x}\Gamma) - \tau_{j_1,\ldots,j_d}(\wt{y}\Gamma)\bigg|&\le \int_{\beta\in \mb{R}}\bigg| \prod_{k=1}^{d}H_{j_k}(\psi(\wt{x}T_{\beta}(\wt{x}))) -  \prod_{k=1}^{d}H_{j_k}(\psi(\wt{y}T_{\beta}(\wt{x})))\bigg|f(\beta)d\beta \\
&\qquad\qquad+\delta^{-O(d)}\cdot \mb{P}_{\beta}[T_{\beta}(\wt{x})\neq T_{\beta}(\wt{y})]\\
&\le \delta^{-O(d)}\eta L^{O_s(d^{O_s(1)})},
\end{align*}
which demonstrates the necessary Lipschitz bound.

Finally we check the supports; note that in order for $\tau_{j_1,\ldots,j_d}(g\Gamma)$ to be nonzero, there exists $\gamma\in\Gamma$ such that $\psi(g\gamma) \in \prod_{k=1}^{d}[j_k\delta,(j_k+2)\delta]$. Let $\wt{g}$ be such that $\psi(\wt{g}) = ((j_k+1)\delta)_{1\le k\le d}\in[-2,2]^{d}$. We have $\snorm{\psi(g\gamma)-\psi(\wt{g})}\le\delta$. Thus, taking the fundamental domain for $G/\Gamma$ which is $\prod_{k=1}^{d}[(j_k+1)\delta-1/2,(j_k+1)\delta+1/2)$ (with respect to $\psi$) we have that the support is contained within a $\delta$-ball of the center. Note as $\psi_{\exp}\circ \psi^{-1}$ is a polynomial with coefficients bounded by $O(M)^{O_s(d^{O_s(1)})}$ and degree $O_s(1)$ (\cite[Lemma~B.1]{Len23b}) we have the desired result taking $\delta = \eps M^{-O_s(d^{O_s(1)})}$ (and appropriately modifying the definition of $L$).
\end{proof}

We next need that sufficiently divisible structure constants prove that $\psi_{\exp}^{-1}(\Gamma)$. 
\begin{proof}[Proof of \cref{lem:structure-constant}]
First, consider any element of $\Gamma$ which can be written $\gamma=\prod_{i=1}^{d}\exp(t_iX_i)$ for $t_i\in\mb{Z}$. We inductively prove that $\prod_{i=j}^d\exp(t_iX_i)$ is in $\psi_{\exp}^{-1}(\mb{Z}^d)$. Suppose we have shown that $\prod_{i=j+1}^{d}\exp(t_iX_i) = \exp(\sum_{i=j+1}^{d}u_iX_i)$ with $u_i\in\mb{Z}$, for some $1\le j\le d-1$ (the base case $j=d-1$ is obvious). Then
\[\prod_{i=j}^{d}\exp(t_iX_i)= \exp(t_jX_j)\exp\bigg(\sum_{i=j+1}^{d}u_iX_i\bigg) = \exp\bigg(t_jX_j + \sum_{i=j+1}^{d}u_iX_i + \cdots\bigg)\]
where the remainder is a finite list of commutators of $t_jX_j$ and $\sum_{i=j+1}^{d}u_iX_i$ coming from the Baker--Campbell--Hausdorff formula. As the coefficients are rationals with denominators bounded by $C_s$, it follows immediately from the condition on divisibility of Lie bracket structure constants that the inductive step holds. We deduce $\psi_{\exp}(\Gamma)\subseteq\mb{Z}^d$.

For the reverse inclusion, we also use induction. We show for all $1\le j\le d-1$ that for any $u_i\in\mb{Z}$, $\exp(\sum_{i=j}^du_iX_i)$ can be written in the form $\prod_{i=j}^d\exp(t_iX_i)$ for $t_i\in\mb{Z}$. Suppose we have the result for $j+1$, so that we have $\exp(\sum_{i=j+1}^{d}u_iX_i) = \prod_{i=j+1}^{d}\exp(t_iX_i)$. Then
\[\exp(-u_jX_j)\exp\bigg(\sum_{i=j}^{d}u_iX_i\bigg) = \exp\bigg(\sum_{i=j+1}^{d}u_i'X_i\bigg)\]
for $u_i'\in\mb{Z}$ using the Baker--Campbell--Hausdorff formula, the divisibility assumption, and the filtered nature of the Mal'cev basis. (The term on the far left is tailored to cancel the $X_j$ part.) Again, the inductive step immediately follows. This demonstrates $\psi_{\exp}^{-1}(\mb{Z}^d)\subseteq\Gamma$, and we are done.
\end{proof}

We next prove a comparison estimate between the distance in Mal'cev coordinates of the first kind and the associated torus metric. 

\begin{proof}[Proof of \cref{lem:first-to-torus}]
Note that 
\[d_{G/\Gamma}(x\Gamma,y\Gamma)\le d_{G}(x,y).\]
As $\psi\circ\psi_{\exp}^{-1}$ is a polynomial of degree $O_s(1)$ with coefficients bounded by $O(M)^{O_s(d^{O_s(1)})}$ (\cite[Lemma~B.1]{Len23b}) we have that 
\[\snorm{\psi(x)}_{\infty}\le (LM)^{O_s(d^{O_s(1)})}\text{ and }\snorm{\psi(x)-\psi(y)}_{\infty}\le \eps(LM)^{O_s(d^{O_s(1)})}.\]
The result then follows from \cite[Lemma~B.3]{Len23b}.
\end{proof}

We now give the short proof of \cref{lem:product-degree}.
\begin{proof}[Proof of \cref{lem:product-degree}]
It is evidently enough to show it for two functions $f_1,f_2$. Additionally, recall the product rule
\begin{equation}\label{eq:discrete-product-rule}
\partial_h(fg)=(\partial_hf)\cdot(T_hg)+f\cdot(\partial_hg)
\end{equation}
where $T_hg(x)=g(x+h)$ defines the translation operator. Note that all translation operators commute with all other operations such as discrete derivatives and products, and other translations. Also, this is valid at a point $x$ so long as $x,x+h$ are in the domains of $f,g$.

Let $s=d_1+d_2$. Suppose we are given $x,h_1,\ldots,h_{s+1}$ with $x+\{0,h_1\}+\cdots+\{0,h_{s+1}\}\subseteq S$. We have, by iterating \cref{eq:discrete-product-rule},
\begin{equation}\label{eq:discrete-product-rule-iterated}
\partial_{h_1,\ldots,h_{s+1}}(f_1f_2)(x)=\sum_{T_1\sqcup T_2=[s+1]}(\partial_{(h_j)_{j\in T_1}}f_1)(x)\cdot T_{\sum_{j\in T_1}h_j}(\partial_{(h_j)_{j\in T_2}}f_2)(x).
\end{equation}
This is seen to be valid since every element of the cube $x+\{0,h_1\}+\cdots+\{0,h_{s+1}\}$ is in the domain $S$. Now, for every disjoint partition $T_1\sqcup T_2=[s+1]$ we have either $|T_1|\ge d_1+1$ or $|T_2|\ge d_2+1$. By the given condition of $f_j$ being locally degree $d_j+1$, we see one of the two terms in \cref{eq:discrete-product-rule-iterated} is always $0$. The result follows.
\end{proof}

\end{document}